\newtheorem{thm}{Theorem}[section]
\newtheorem{cor}[thm]{Corollary}
\newtheorem{lem}[thm]{Lemma}
\newtheorem{prop}[thm]{Proposition}
\theoremstyle{definition}
\newtheorem{ex}[thm]{Example}
\theoremstyle{definition}
\newtheorem{defn}[thm]{Definition}
\theoremstyle{definition}
\newtheorem{rem}[thm]{Remark}
\theoremstyle{definition}
\def\Q{\mathbb Q}
\def\C{\mathbb C}
\def\R{\mathbb R}
\def\Z{\mathbb Z}
\def\A{\mathscr A}
\def\B{\mathscr B}
\def\dim{\operatorname{dim}}
\def\supp {\mathrm{supp}}
\def\O{\mathcal O}
\def\m{\mathit m}
\def\V{\mathrm V}
\def\RR{\mathscr R}
\def\LL{\mathscr L}
\def\k{\mathit k}
\def\geq{\geqslant}
\def\leq{\leqslant}
\def\*{\blacksquare}
\subjclass[$2010$ Mathematics Subject Classification]{Primary
32S05; Secondary 13H15}
\begin{document}

\title[{\L}ojasiewicz exponent of families of ideals and Newton filtrations]
{{\L}ojasiewicz exponent of families of ideals, Rees mixed\\ \vskip4pt multiplicities and Newton filtrations}

\author{Carles Bivi\`a-Ausina}
\address{
Institut Universitari de Matem\`atica Pura i Aplicada,
Universitat Polit\`ecnica de Val\`encia,
Cam\'i de Vera, s/n,
46022 Val\`encia,
Spain}
\email{carbivia@mat.upv.es}

\author{Santiago Encinas}
\address{Departamento de Matem\'atica Aplicada,
Universidad de Valladolid,
Avda. Salamanca s/n,
47014 Valladolid,
Spain}
\email{sencinas@maf.uva.es}

\keywords{{\L}ojasiewicz exponents, integral closure of ideals, mixed
multiplicities of ideals, monomial ideals}

\thanks{The first author was partially supported by DGICYT
Grant MTM2009--08933. The second author was partially supported by
DGICYT Grant MTM2009--07291 and CCG08-UAM/ESP-3928.}

\begin{abstract}
We give an expression for the {\L}ojasiewicz exponent of a wide class of
n-tuples of ideals $(I_1,\dots, I_n)$ in $\O_n$ using the information given by a fixed Newton filtration.
In order to obtain this expression we consider a reformulation of \L ojasiewicz exponents in terms of Rees mixed multiplicities.
As a consequence, we obtain a wide class of
semi-weighted homogeneous functions $(\C^n,0)\to (\C,0)$ for which the \L ojasiewicz of its gradient map
$\nabla f$ attains the maximum possible value.
\end{abstract}

\maketitle

\section{Introduction}


Let $\O_n$ be the ring of complex analytic function germs $f:(\C^n,0)\to \C$. S. \L ojasiewicz proved in \cite{Lojasiewicz1959} (as a consequence of a more general result of functional analysis) that if $I$ is an ideal of $\O_n$ of finite colength and $g_1,\dots, g_s$ is a generating system of $I$, then there exists a real number $\alpha>0$ for which there exist a constant $C>0$ and an open neighbourhood $U$ of $0$ in $\C^n$ such that
\begin{equation}\label{Lojineq}
\Vert x\Vert^\alpha\leq C\sup_i\vert g_i(x)\vert,
\end{equation}
for all $x\in U$. The infimum of such $\alpha$ is called the {\it \L ojasiewicz exponent of $I$} and is denoted by $\LL_0(I)$.
If $g:(\C^n,0)\to (\C^m,0)$ denotes a complex analytic map germ such that $g^{-1}(0)=\{0\}$, then the {\it \L ojasiewicz exponent of
$g$} is defined as $\LL_0(g)=\LL_0(I)$, where $I$ denotes the ideal of $\O_n$ generated by the component functions of $g$.
If $f\in\O_n$ has an isolated singularity at the origin, then the \L ojasiewicz exponent of the gradient map $\nabla f:(\C^n,0)\to (\C^n,0)$
is particularly known in singularity theory, by virtue of the result of Teissier \cite[p.\ 280]{Teissier1977} stating that the degree of $C^0$-determinacy of $f$ is equal to $[\LL_0(\nabla f)]+1$, where $[a]$ stands for the integer part of a given $a\in \R$.
It is known that $\LL_0(\nabla f)$ is an analytical invariant of $f$ but it is still unknown if $\LL_0(\nabla f)$ is a topological invariant of $f$ (see for instance \cite{KOP2009}).

Let $\overline\nu_I:\O_n\to \R_{\geq 0}\cup\{+\infty\}$ be the asymptotic Samuel function of $I$ (see \cite{LT2008} or \cite[p.\ 139]{HunekeSwanson2006}). By a result of Nagata \cite{Nagata1957} the range of $\overline \nu_I$ is a subset of $\Q_{\geq 0}\cup\{+\infty\}$.
If $J$ is any ideal of $\O_n$, let us define $\overline\nu_I(J)=\min\{\overline\nu_I(h_1),\dots,\overline\nu_I(h_r)\}$, where
$h_1,\dots, h_r$ denotes any generating system of $J$. We will denote by $m_n$, or simply by $m$, the maximal ideal of $\O_n$.
Lejeune and Teissier proved in \cite{LT2008} the following fundamental facts: $\LL_0(I)=\frac{1}{\overline \nu_I(m)}$ (therefore $\LL_0(I)$ is a rational number), relation (\ref{Lojineq}) holds for $\alpha=\LL_0(I)$, for some constant $C>0$ and some open neighbourhood $U$ of $0\in\C^n$,
and $\LL_0(I)$ is expressed as
\begin{equation}\label{LI}
\LL_0(I)=\min\bigg\{\frac{p}{q}: p,q\in\Z_{\geq 1},\,\m^p\subseteq\overline{I^q}\bigg\},
\end{equation}
where $\overline J$ denotes the integral closure of a given ideal $J$ of $\O_n$. The above expression was one of the motivations that lead the first author
to introduce in \cite{Bivia2009} the notion of \L ojasiewicz exponent of a set of ideals (see Definition \ref{Lojsetofideals}).
By substituting $m$ by a proper ideal $J$ of $\O_n$ in (\ref{LI}) we obtain what is known as the
{\it \L ojasiewicz exponent of $I$ with respect to $J$} (see (\ref{LojaExpGen}), (\ref{LIGeneral}) and \cite{LT2008}).


The effective computation of the \L ojasiewicz exponent $\LL_0(I)$ of a given ideal $I$ of $\O_n$ is a non-trivial problem, since it is intimately related with the determination of the integral closure of $I$. The authors applied in \cite{BiviaEncinas2009} the explicit
construction of a log-resolution of $I$ to show an effective method to compute $\LL_0(I)$.
Newton polyhedra have proven to be a powerful tool in the estimation, and determination in some cases, of \L ojasiewicz exponents,
as can be seen in \cite{Bivia2003}, \cite{Fukui1991}, \cite{Lenarcik} and \cite{Oleksik2009}.


Let $w=(w_1,\dots, w_n)\in\Z^n_{\geq 1}$, we say that a monomial $x_1^{k_1}\cdots x_n^{k_n}$
has {\it $w$-degree $d$} when $w_1k_1+\cdots +w_nk_n=d$. A polynomial function $f:\C^n\to \C$
is said to be {\it weighted homogeneous of degree $d$ with respect to $w$} when $f$ is written as a sum of monomials of $w$-degree $d$.
A function $h\in\O_n$ is termed {\it semi-weighted} homogeneous of degree $d$ with respect to $w$ when $h$ is expressed as a sum $h=h_1+h_2$, where
$h_1$ is weighted homogeneous of degree $d$ with respect to $w$, $h_1$ has an isolated singularity at the origin and $h_2$ is a sum of monomials
 of $w$-degree greater that $d$.

The motivation of our work is the article \cite{KOP2009} of Krasi\'nski-Oleksik-P\l oski, whose main result is a formula for the \L ojasiewicz exponent $\LL_0(\nabla f)$ of any weighted homogeneous function $f:\C^3\to \C$ in terms of the weights and the degree of $f$. More precisely, if $f\in\O_3$ is weighted homogeneous with respect to $(w_1,w_2,w_3)$ of degree $d$ and $w_0=\min\{w_1,w_2,w_3\}$ then it is proven in \cite{KOP2009} that
\begin{equation}\label{formulaKOP}
\LL_0(\nabla f)=\min\left\{\frac{d-w_0}{w_0},\,\,\, \prod_{i=1}^3\left(\frac{d}{w_i}-1\right) \right\}.
\end{equation}
We remark that when $d\geq 2w_i$, for all $i=1,2,3$, then $\LL_0(\nabla f)=\frac{d-w_0}{w_0}$.
As a consequence of (\ref{formulaKOP}) we have that if $f:\C^3\to\C$ is a weighted homogeneous
function with respect to $(w_1,w_2,w_3)$, then
$\LL_0(\nabla f)$ is a topological invariant of $f$,
by the results of Saeki \cite{Saeki1988} and Yau \cite{Yau1988}.

Let us fix a vector of weights $w=(w_1,\dots, w_n)\in \Z^n_{\geq 1}$ and let $w_0=\min_iw_i$.
Let $f\in\O_n$ be a semi-weighted homogeneous function of degree $d$ with respect to $w$. It is well-known that
\begin{equation}\label{Lojmaxim}
\LL_0(\nabla f)\leq \frac{d-w_0}{w_0}.
\end{equation}
If $d<2w_i$, for some $i\in\{1,\dots, n\}$, then it is easy to find examples where inequality (\ref{Lojmaxim}) is strict.
Assuming $d\geq 2w_i$, for all $i=1,\dots, n$, then it is reasonable to conjecture that equality holds in (\ref{Lojmaxim}).

In \cite{BiviaEncinas2011} we considered the problem of finding a sufficient condition on $f$
for equality in (\ref{Lojmaxim}).
We addressed this problem in the framework of \L ojasiewicz exponents
of sets of $n$ ideals in $\O_n$ (in the sense of \cite{Bivia2009}) and weighted homogeneous filtrations. Thus, we introduced in \cite{BiviaEncinas2011} the concept of sets of ideals admitting a {\it $w$-matching} (see Definition \ref{defwmatching}).
The application of this notion to gradient maps lead to determine a wide class of functions for which equality holds in (\ref{Lojmaxim}). In particular, we found that this equality is true for every semi-weighted homogeneous function $f\in\O_n$ of degree $d$ with respect to $w$ such that $w_i$ divides $d$, for all $i=1,\dots, n$ (see \cite[Corollary 4.16]{BiviaEncinas2011}).

In this article we show an extension of the main result of \cite{BiviaEncinas2011} to Newton filtrations in general (see Theorem \ref{main}).
This extension projects to new results about the \L ojasiewicz exponent of the gradient of semi-weighted homogeneous functions.
In this direction, we emphasize that Corollary \ref{sufficient} shows a quite wide class of functions $f\in\O_n$ for which
$\LL_0(\nabla f)$ attains the maximum possible value, that is, such that equality holds in (\ref{Lojmaxim}).
The techniques that we will apply in this article come from multiplicity theory in local rings. More precisely, we use
the notion of mixed multiplicities of a family of ideals of finite colength and its generalization to suitable families of ideals
called Rees mixed multiplicities (see \cite{Bivia2008}).

Let us consider a Newton polyhedron $\Gamma_+$ in $\R^n_+$.
The key ingredient in our approach to \L ojasiewicz exponents in this article is the notion of $\Gamma_+$-linked pairs $(I; J_1,\dots, J_n)$,
where $I, J_1,\dots, J_n$ are ideals of $\O_n$ (see Definition \ref{linkage}). This notion is expressed via the non-degeneracy condition
explored in \cite{BFS}.


The article is organized as follows. In Section \ref{secciobasica} we recall the basic definitions and previous results
(mainly from \cite{Bivia2008} and \cite{Bivia2009}) that lead to the definition of \L ojasiewicz exponent of a set of ideals. For the sake of completeness we also introduce in Section \ref{secciobasica} some auxiliary results needed in the proof of the main result.
In Section \ref{mainsection} we show the main result of the article (see Theorem \ref{main}) and discuss some examples.
In Section \ref{whfiltrations} we particularize the techniques developed in Section \ref{mainsection} to weighted homogeneous filtrations and, as said before, we derive new results about the \L ojasiewicz exponent of gradient maps.

\section{The {\L}ojasiewicz exponent of a set of ideals}\label{secciobasica}

Let  $(R,m)$ be a Noetherian local ring of dimension $n$ and let $I$ be an ideal of $R$
of finite colength (also called an $m$-primary ideal). Then we denote by $e(I)$ the Samuel
multiplicity of $I$ (see \cite{HIO}, \cite[\S 11]{HunekeSwanson2006} or \cite{Teissier1973} for the definition and basic properties of this notion).
We recall that $e(I)=\ell(R/I)$ if $I$ admits a generating system formed by $n$ elements.

If $I_1,\dots, I_n$ are ideals of
$R$ of finite colength, then we denote by $e(I_1,\dots,I_n)$
the mixed multiplicity of $I_1,\dots, I_n$ in the sense of Risler and Teissier \cite{Teissier1973}
(see also \cite[\S 17]{HunekeSwanson2006} or \cite{TeissierMSRI}).

\begin{defn}\cite{Bivia2008}\label{lasigma}
Let $(R,m)$ be a Noetherian local ring of dimension $n$. Let
$I_1,\dots,
I_n$ be ideals of $R$. Then we define the {\it Rees mixed
multiplicity of $I_1,\dots, I_n$ as}
\begin{equation}\label{sigma}
\sigma(I_1,\dots, I_n)=\max_{r\in\Z_+}\,e(I_1+\m^r,\dots,
I_n+\m^r),
\end{equation}
when the number on the right hand side is finite. If the set of
integers
$\{e(I_1+\m^r,\dots, I_n+\m^r): r\in\Z_+\}$ is non-bounded then we
set $\sigma(I_1,\dots, I_n)=\infty$.
\end{defn}

We remark that if $I_i$ is an ideal of $R$ of finite colength, for all
$i=1,\dots, n$, then $\sigma(I_1,\dots, I_n)=e(I_1,\dots, I_n)$.
Moreover, if $I_1=\cdots=I_n=I$, for some ideal $I$ of $R$ of finite colength, then
$e(I_1,\dots, I_n)=e(I)$.

Let us suppose that the residue field $k=R/m$ is infinite. Let
$I_1,\dots, I_n$ be ideals of $R$ and let $\{a_{i1},\dots, a_{is_i}\}$ be
a generating system of $I_i$, where $s_i\geqslant 1$, for
$i=1,\dots, n$. Let $s=s_1+\cdots +s_n$. We say that a property
holds for {\it sufficiently general} elements of $I_1\oplus \cdots
\oplus I_n$ if there exists a non-empty Zariski-open set $U$ in
$k^s$ such that the said property holds for all elements
$(g_1,\dots, g_n)\in I_1\oplus \cdots \oplus I_n$ for which
$g_i=\sum_{j}u_{ij}a_{ij}$, $i=1,\dots, n$, where $(u_{11}, \dots,
u_{1s_1},\dots, u_{n1}, \dots, u_{ns_n})\in U$.

The next proposition characterizes the finiteness of $\sigma(I_1,\dots, I_n)$.

\begin{prop}\cite[p.\,393]{Bivia2008}\label{sigmaexists}
Let $I_1,\dots, I_n$ be ideals of a Noetherian
local ring $(R,m)$ such that the residue field $\k=R/m$ is
infinite. Then $\sigma(I_1,\dots, I_n)<\infty$ if and only if
there exist elements $g_i\in I_i$, for $i=1,\dots, n$, such that
$\langle g_1,\dots, g_n\rangle$ has finite colength. In this case,
we have that $\sigma(I_1,\dots, I_n)=e(g_1,\dots, g_n)$ for
sufficiently general elements $(g_1,\dots, g_n)\in I_1\oplus
\cdots \oplus I_n$.
\end{prop}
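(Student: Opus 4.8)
The plan is to study the monotone behaviour of the sequence $r\mapsto e(I_1+\m^r,\dots,I_n+\m^r)$ and to compare it, via general elements, with the colength of an ideal generated by one element taken from each $I_i$. First I would record two standard facts about mixed multiplicities of $\m$-primary ideals, which apply here because each $I_i+\m^r$ contains $\m^r$ and is therefore $\m$-primary: (i) mixed multiplicity is order-reversing in each argument, so that $a_r:=e(I_1+\m^r,\dots,I_n+\m^r)$ is non-decreasing in $r$; and (ii) since $\k=R/\m$ is infinite, for $\m$-primary $J_1,\dots,J_n$ one has $e(J_1,\dots,J_n)=\ell(R/\langle a_1,\dots,a_n\rangle)=e(a_1,\dots,a_n)$ for sufficiently general $a_i\in J_i$, this value being the minimum of $\ell(R/\langle a_1,\dots,a_n\rangle)$ over all $a_i\in J_i$ with $\langle a_1,\dots,a_n\rangle$ of finite colength. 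By (i), $\sigma(I_1,\dots,I_n)$ is just the supremum of the $a_r$, and it is finite precisely when this sequence is bounded, i.e. eventually constant.

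For the direction in which the existence of good elements forces finiteness, I would take $g_i\in I_i$ with $\langle g_1,\dots,g_n\rangle$ of finite colength. Then $g_i\in I_i\subseteq I_i+\m^r$ for every $r$, so the minimality clause in (ii) gives $a_r\leq \ell(R/\langle g_1,\dots,g_n\rangle)=e(g_1,\dots,g_n)$ for all $r$. Hence the sequence is bounded and $\sigma(I_1,\dots,I_n)<\infty$.

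The converse, together with the evaluation of $\sigma$, is where the work lies. Assuming $\sigma=:N<\infty$, I would pick general elements and write a general element of $I_i+\m^r$ as $g_i+\mu_i$ with $g_i$ general in $I_i$ and $\mu_i$ general in $\m^r$; then (ii) gives $a_r=\ell(R/\langle g_1+\mu_1,\dots,g_n+\mu_n\rangle)$. Since $a_r\leq N$, the $\m$-primary ideal $\langle g_1+\mu_1,\dots,g_n+\mu_n\rangle$ contains $\m^N$ (an $\m$-primary ideal of colength $\leq N$ contains $\m^N$, by the descending-chain/Nakayama argument on $R/J$). Because $\langle g_1+\mu_1,\dots,g_n+\mu_n\rangle\subseteq \langle g_1,\dots,g_n\rangle+\m^r$, for $r>N$ this yields $\m^N\subseteq \langle g_1,\dots,g_n\rangle+\m^r\subseteq \langle g_1,\dots,g_n\rangle+\m\cdot\m^N$, and Nakayama forces $\m^N\subseteq\langle g_1,\dots,g_n\rangle$. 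In particular $\langle g_1,\dots,g_n\rangle$ has finite colength, which proves the converse; and then $\mu_i\in\m^r\subseteq\m^N\subseteq\langle g_1,\dots,g_n\rangle$ shows $\langle g_1+\mu_1,\dots,g_n+\mu_n\rangle=\langle g_1,\dots,g_n\rangle$, so that $a_r=\ell(R/\langle g_1,\dots,g_n\rangle)=e(g_1,\dots,g_n)$ for every $r>N$. Thus $\sigma=e(g_1,\dots,g_n)$ for sufficiently general $g_i\in I_i$.

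I expect the main obstacle to be this last paragraph: representing a general element of $I_i+\m^r$ as an honest perturbation $g_i+\mu_i$ of a general element of $I_i$, and showing that once the colengths are uniformly bounded the high-order part $\mu_i$ is absorbed into $\langle g_1,\dots,g_n\rangle$. The genericity bookkeeping (that the Zariski-open conditions defining ``sufficiently general'' in $I_i$ and in $\m^r$ can be met simultaneously) and the uniform Nakayama step are the delicate points, whereas the monotonicity in (i) and the boundedness half of the argument are routine.
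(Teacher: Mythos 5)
The paper does not prove this proposition internally (it is quoted from \cite[p.\,393]{Bivia2008}), so your argument has to stand on its own, and there it has one genuine gap: you systematically identify the Samuel multiplicity of a parameter ideal with its colength, which is valid if and only if $R$ is Cohen--Macaulay, whereas the proposition is stated for an arbitrary Noetherian local ring. Concretely, the Risler--Teissier theorem gives $a_r=e\big(\langle g_1+\mu_1,\dots,g_n+\mu_n\rangle\big)$, the \emph{multiplicity} of the parameter ideal $q_r$, not the length $\ell(R/q_r)$; in general one only has $e(q_r)\leq \ell(R/q_r)$, with equality for all parameter ideals precisely in the Cohen--Macaulay case. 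So from $a_r\leq N$ you cannot conclude $\ell(R/q_r)\leq N$, and the trapping $m^N\subseteq q_r$ --- which is the pivot of your whole converse direction (it feeds the Nakayama step, the finite colength of $\langle g_1,\dots,g_n\rangle$, and the absorption $q_r=\langle g_1,\dots,g_n\rangle$) --- is not justified. This cannot be repaired by enlarging $N$: in a non-Cohen--Macaulay ring the difference $\ell(R/Q)-e(Q)$ is in general unbounded as $Q$ runs over parameter ideals, so a bound on the multiplicities $a_r$ forces no uniform power of $m$ into $q_r$. The same conflation appears in your fact (ii), where the displayed equality should read $e(J_1,\dots,J_n)=e\big(\langle a_1,\dots,a_n\rangle\big)$. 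For the stated generality one has to work with superficial elements and joint reductions directly, rather than through colengths, which is presumably the route of \cite{Bivia2008}. To your credit, when $R$ is Cohen--Macaulay --- in particular for $R=\O_n$, the only case this paper actually uses --- your argument is correct and pleasantly self-contained: the boundedness direction, the Nakayama steps, and the projection argument for the genericity bookkeeping all check out.

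Two secondary points. First, the ``minimum'' clause of your fact (ii), namely $e(J_1,\dots,J_n)\leq e\big(\langle a_1,\dots,a_n\rangle\big)$ for \emph{every} tuple $a_i\in J_i$ generating an ideal of finite colength, is a genuine theorem from the joint-reduction circle of ideas (Rees; see \cite[Chapter 17]{HunekeSwanson2006}), not a formal consequence of slotwise monotonicity: attempting to derive it by comparing $e\big(\langle a_1\rangle+m^t,\dots,\langle a_n\rangle+m^t\big)$ with $e(\langle a_1,\dots,a_n\rangle)$ leads you back to a statement essentially equivalent to the proposition being proved, so it must be quoted with a precise reference to avoid circularity. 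Second, at the end you should say a word on why $a_r=\sigma$ for the chosen $r>N$ (a priori the non-decreasing sequence $a_r$ might stabilize only later); this closes quickly, since once your argument produces $g_i\in I_i$ with $m^N\subseteq\langle g_1,\dots,g_n\rangle$, your own Step 1 gives $\sigma\leq e(g_1,\dots,g_n)=a_r\leq\sigma$, but the sentence is worth writing.
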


If $I$ and $J$ are ideals of finite colength of $R$ such that $J\subseteq I$
then it is well-known that $e(J)\geq e(I)$ (see for instance \cite[p.\ 292]{Teissier1973}). The following result extends this inequality to Rees mixed multiplicities.

\begin{lem}\label{reverseincl}\cite[p. 392]{Bivia2009}
Let $(R,m)$ be a Noetherian local ring of dimension $n\geq 1$. Let
$J_1,\dots, J_n$ be ideals of $R$ such that
$\sigma(J_1,\dots, J_n)<\infty$. Let $I_1,\dots, I_n$ be ideals of
$R$ for which $J_i\subseteq I_i$, for all $i=1,\dots, n$. Then
$\sigma(I_1,\dots, I_n)<\infty$ and
$$
\sigma(J_1,\dots, J_n)\geqslant \sigma(I_1,\dots, I_n).
$$
\end{lem}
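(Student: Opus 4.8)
The plan is to reduce the statement to the \emph{monotonicity of mixed multiplicities} of $\m$-primary ideals, applied termwise in the parameter $r$ of Definition \ref{lasigma}, and then to pass to the maximum over $r$. The starting observation is that for every $r\in\Z_+$ the ideals $J_i+\m^r$ and $I_i+\m^r$ all have finite colength, since they contain $\m^r$, and that the hypothesis $J_i\subseteq I_i$ yields the inclusions $J_i+\m^r\subseteq I_i+\m^r$ for all $i=1,\dots,n$ and all $r$. Thus each pair $(J_1+\m^r,\dots,J_n+\m^r)$ and $(I_1+\m^r,\dots,I_n+\m^r)$ consists of $\m$-primary ideals related by inclusion in each slot, which is exactly the setting in which one expects the mixed multiplicity to be reversed by inclusion.

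The key step is therefore to prove that if $A_1,\dots,A_n$ and $B_1,\dots,B_n$ are ideals of finite colength with $A_i\subseteq B_i$ for all $i$, then $e(A_1,\dots,A_n)\geq e(B_1,\dots,B_n)$. Granting this, the lemma follows at once: for each $r$ we obtain
\[
e(J_1+\m^r,\dots,J_n+\m^r)\geq e(I_1+\m^r,\dots,I_n+\m^r),
\]
so the set $\{e(I_1+\m^r,\dots,I_n+\m^r):r\in\Z_+\}$ is bounded above by $\sigma(J_1,\dots,J_n)<\infty$. Hence $\sigma(I_1,\dots,I_n)<\infty$ and, taking the maximum over $r$ on both sides, $\sigma(J_1,\dots,J_n)\geq\sigma(I_1,\dots,I_n)$. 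The finiteness of $\sigma(I_1,\dots,I_n)$ can also be obtained directly from Proposition \ref{sigmaexists}, since any $g_i\in J_i$ with $\langle g_1,\dots,g_n\rangle$ of finite colength also lies in $I_i$.

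To establish the monotonicity I would first reduce to the case of an infinite residue field by the standard faithfully flat base change from $R$ to $R(t)=R[t]_{\m R[t]}$, which leaves $n=\dim R$ and all the multiplicities $e(A_1+\m^r,\dots,A_n+\m^r)$, and hence $\sigma$, unchanged. Next I would change one ideal at a time, so that it suffices to treat $A_1\subseteq B_1$ with $A_j=B_j$ for $j\geq 2$. Choosing elements $a_2\in A_2,\dots,a_n\in A_n$ sufficiently general — general enough to serve simultaneously for $e(A_1,A_2,\dots,A_n)$ and for $e(B_1,A_2,\dots,A_n)$, which is possible because only finitely many nonempty Zariski-open conditions are imposed — Teissier's inductive reduction of mixed multiplicities identifies both quantities with ordinary multiplicities in the one-dimensional local ring $R'=R/(a_2,\dots,a_n)$, namely $e(A_1,A_2,\dots,A_n)=e(A_1R';R')$ and $e(B_1,A_2,\dots,A_n)=e(B_1R';R')$. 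Since $A_1R'\subseteq B_1R'$ are $\m R'$-primary, the classical inclusion-reversing inequality for ordinary multiplicities (see \cite[p.\ 292]{Teissier1973}) gives $e(A_1R')\geq e(B_1R')$, which is the desired comparison.

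The main obstacle is precisely this monotonicity step. Its subtlety is that one cannot simply invoke Proposition \ref{sigmaexists} and compare general elements of $J_i+\m^r$ with general elements of $I_i+\m^r$: a general element of the smaller ideal need not generate, together with its companions, an ideal contained in the one spanned by general elements of the larger ideals, so the ordinary inclusion-reversing inequality does not apply directly. The device of fixing the \emph{same} general parameters $a_2,\dots,a_n$ and descending to the quotient $R'$ is what makes the inclusion survive the reduction to a single ideal in a one-dimensional ring, where the classical fact is available.
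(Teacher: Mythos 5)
Your argument is correct, and it is essentially the argument behind the paper's statement: the paper gives no proof here, citing \cite[p.\ 392]{Bivia2009}, and the proof there is exactly your termwise reduction --- $J_i+\m^r\subseteq I_i+\m^r$ are $\m$-primary for every $r$, the inclusion-reversing monotonicity of mixed multiplicities gives $e(J_1+\m^r,\dots,J_n+\m^r)\geq e(I_1+\m^r,\dots,I_n+\m^r)$, and taking the maximum over $r$ yields both finiteness and the inequality; your extra material (base change to $R(t)$, one slot at a time, common general elements $a_2,\dots,a_n$ and descent to the one-dimensional quotient) is just the standard Risler--Teissier proof of that monotonicity, which the source simply quotes. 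The only small caveat is your side remark that finiteness also follows from Proposition \ref{sigmaexists}: that proposition assumes the residue field is infinite, whereas your main route needs no such hypothesis.
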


Let us recall some basic definitions. We will denote by $\R_+$ the set of non-negative
real numbers. We also set $\Z_+=\Z\cap\R_+$. Let us fix a coordinate system
$x_1,\dots,x_n$ in $\C^n$.  If $k=(k_1,\dots, k_n)\in\Z^n_+$, we
will denote the monomial $x_1^{k_1}\cdots x_n^{k_n}$ by $x^k$.
Let $A\subseteq \Z^n_+$, then the {\it Newton polyhedron determined by $A$}, denoted by $\Gamma_+(A)$, is
the convex hull of the set $\{k+v: k\in A, v\in\R^n_+\}$. A subset $\Gamma_+\subseteq \R^n_+$
is called a {\it Newton polyhedron} when $\Gamma_+=\Gamma_+(A)$, for some $A\subseteq \Z^n_+$. A Newton polyhedron
$\Gamma_+\subseteq \R^n_+$ is termed {\it convenient} when $\Gamma_+$ meets the $x_i$-axis in a point
different from the origin, for all $i=1,\dots, n$.

If $h\in\O_n$ and $h=\sum_ka_kx^k$ denotes the Taylor expansion of $h$
around the origin, then the {\it support of $h$} is the set
$\supp(h)=\{k\in\Z_+^n:a_k\neq 0\}$.  If $h\neq 0$, then the {\it Newton
polyhedron of $h$} is defined as $\Gamma_+(h)=\Gamma_+(\supp(h))$. If $h=0$ then we set
$\Gamma_+(h)=\emptyset$.  If $I$ is an ideal of $\O_n$ and
$g_1,\dots,g_s$ is a generating system of $I$, then we define the {\it
Newton polyhedron of $I$} as the convex hull of $\Gamma_+(g_1)\cup
\cdots \cup\Gamma_+(g_s)$.  It is easy to check that the definition of
$\Gamma_+(I)$ does not depend on the chosen generating system of $I$. We denote
the Newton boundary of $\Gamma_+(I)$ by $\Gamma(I)$.

We say that a proper ideal $I$ of $\O_n$ is {\it monomial} when $I$ admits a
generating system formed by monomials. We recall that if $I$ is a monomial ideal of $\O_n$ of finite
colength, then $e(I)=n!\V_n(\R^n_+\smallsetminus\Gamma_+(I))$, where $\V_n$ denotes $n$-dimensional volume
(see for instance \cite[p. 239]{TeissierMSRI}).

\begin{defn}\label{S(I)}
Let $I_1,\dots, I_n$ be monomial ideals of $\O_n$ with
$\sigma(I_1,\dots, I_n)<\infty$.  Then we denote by $\mathcal
S(I_1,\dots, I_n)$ the family of maps $g=(g_1,\dots,
g_n):(\C^n,0)\to (\C^n,0)$ such that $g^{-1}(0)=\{0\}$, $g_i\in I_i$,
for all $i=1,\dots, n$, and $\sigma(I_1,\dots, I_n)=e(g_1,\dots,
g_n)$, where $e(g_1,\dots, g_n)$ stands for the multiplicity of the
ideal of $\O_n$ generated by $g_1,\dots, g_n$.  The elements of
$\mathcal S(I_1,\dots, I_n)$ are characterized in
\cite[Theorem 3.10]{Bivia2008}.

We denote by $\mathcal S_0(I_1,\dots, I_n)$ the set formed by the maps
$(g_1,\dots, g_n)\in \mathcal S(I_1,\dots, I_n)$ such that
$\Gamma_+(g_i)=\Gamma_+(I_i)$, for all $i=1,\dots, n$.
\end{defn}

Let $I_1,\dots, I_n$ be ideals of a local ring $(R,m)$ for which
$\sigma(I_1,\dots, I_n)<\infty$.  Then we define
\begin{equation}\label{laerre}
r(I_1,\dots, I_n)=\min\big\{r\in\Z_+: \sigma(I_1,\dots,
I_n)=e(I_1+m^r,\dots, I_n+m^r)\big\}.
\end{equation}

We recall that if $g:(\C^n,0)\to (\C^m,0)$ is an analytic map germ such that $g^{-1}(0)=\{0\}$, then
$\LL_0(g)$ denotes the \L ojasiewicz exponent of the ideal generated by the components of $g$.

\begin{thm}\textnormal{\cite[p.\ 398]{Bivia2009}}\label{base}
Let $I_1,\dots, I_n$ be monomial ideals of $\O_n$ such that
$\sigma(I_1,\dots,
I_n)$ is finite. If $g\in\mathcal S_0(I_1,\dots, I_n)$, then $\mathscr
L_0(g)$ depends only on $I_1,\dots, I_n$ and it is given by
\begin{equation}\label{intrinsec}
\mathscr L_0(g)=\min_{s\geqslant 1}\frac{r(I_1^s,\dots,
I_n^s)}{s}.
\end{equation}
\end{thm}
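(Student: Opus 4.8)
The plan is to pass to the single ideal generated by the components of $g$ and then translate the integral-closure conditions in $(\ref{LI})$ into statements about Rees mixed multiplicities. Write $K=\langle g_1,\dots,g_n\rangle$, so that $\LL_0(g)=\LL_0(K)$ by definition and $e(K)=\sigma(I_1,\dots,I_n)$ because $g\in\mathcal S_0(I_1,\dots,I_n)\subseteq\mathcal S(I_1,\dots,I_n)$. Grouping the fractions in $(\ref{LI})$ by their denominator, and using that $\{p:\m^p\subseteq\overline{K^s}\}$ is upward closed, one obtains
\[
\LL_0(K)=\min_{s\geq 1}\frac{\rho(s)}{s},\qquad \rho(s):=\min\big\{p\in\Z_{\geq 1}:\m^p\subseteq\overline{K^s}\big\}.
\]
Thus the whole theorem, including the assertion that $\LL_0(g)$ depends only on $I_1,\dots,I_n$, reduces to proving that $\rho(s)=r(I_1^s,\dots,I_n^s)$ for every $s\geq 1$.

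Next I would replace $K^s$ by an ideal generated by one element of each $I_i^s$. Since $g^{-1}(0)=\{0\}$, the germs $g_1,\dots,g_n$ form a system of parameters, hence $e(\langle g_1^s,\dots,g_n^s\rangle)=s^n\ell(\O_n/\langle g_1,\dots,g_n\rangle)=s^n e(K)=e(K^s)$; as $\langle g_1^s,\dots,g_n^s\rangle\subseteq K^s$ and $\O_n$ is regular, hence quasi-unmixed, Rees' multiplicity theorem yields $\overline{\langle g_1^s,\dots,g_n^s\rangle}=\overline{K^s}$. Setting $h_i=g_i^s$, we get $\rho(s)=\min\{p:\m^p\subseteq\overline{\langle h_1,\dots,h_n\rangle}\}$. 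By Rees' theorem again, $\m^p\subseteq\overline{\langle h_1,\dots,h_n\rangle}$ is equivalent to $e(\langle h_1,\dots,h_n\rangle)=e(\langle h_1,\dots,h_n\rangle+\m^p)$; since $e(\langle h_1,\dots,h_n\rangle+\m^p)$ is non-decreasing in $p$ and stabilizes at $e(\langle h_1,\dots,h_n\rangle)$ once $\m^p\subseteq\langle h_1,\dots,h_n\rangle$, the value $\rho(s)$ is the least $p$ at which this stabilization occurs. In parallel, by $(\ref{laerre})$ together with Lemma \ref{reverseincl}, $r(I_1^s,\dots,I_n^s)$ is the least $r$ at which the non-decreasing sequence $e(I_1^s+\m^r,\dots,I_n^s+\m^r)$ reaches its stable value $\sigma(I_1^s,\dots,I_n^s)$.

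Consequently $\rho(s)=r(I_1^s,\dots,I_n^s)$ would follow from the two identities
\[
e(\langle h_1,\dots,h_n\rangle)=\sigma(I_1^s,\dots,I_n^s),\qquad e(\langle h_1,\dots,h_n\rangle+\m^p)=e(I_1^s+\m^p,\dots,I_n^s+\m^p)
\]
for all $p\geq 1$. I would establish both by exhibiting tuples lying in the appropriate families $\mathcal S_0$. For the first, note that $h_i=g_i^s\in I_i^s$, that $\Gamma_+(h_i)=s\Gamma_+(g_i)=s\Gamma_+(I_i)=\Gamma_+(I_i^s)$, and that $h^{-1}(0)=\{0\}$; the non-degeneracy characterization of $\mathcal S_0$ in \cite[Theorem 3.10]{Bivia2008} should be stable under taking coordinatewise $s$-th powers, so that $(h_1,\dots,h_n)\in\mathcal S_0(I_1^s,\dots,I_n^s)$ and its multiplicity is $\sigma(I_1^s,\dots,I_n^s)$. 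For the second, for sufficiently general $\eta_i\in\m^p$ one has $\Gamma_+(h_i+\eta_i)=\Gamma_+(I_i^s+\m^p)$, and one expects $(h_1+\eta_1,\dots,h_n+\eta_n)\in\mathcal S_0(I_1^s+\m^p,\dots,I_n^s+\m^p)$, whose multiplicity equals $e(I_1^s+\m^p,\dots,I_n^s+\m^p)$ since these ideals have finite colength.

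The main obstacle is the second identity. The difficulty is that the natural, slot-respecting choice $h_i+\eta_i$ (which does lie in $I_i^s+\m^p$) is not a fully general element of the combined ideal $\langle h_1,\dots,h_n\rangle+\m^p$, whereas computing $e(\langle h_1,\dots,h_n\rangle+\m^p)$ through a minimal reduction would a priori require general $\C$-linear combinations of all the generators, which do not respect the individual ideals $I_i^s+\m^p$. Reconciling these two requirements---showing that the diagonal perturbation $(h_1+\eta_1,\dots,h_n+\eta_n)$ nonetheless generates a reduction of $\langle h_1,\dots,h_n\rangle+\m^p$ while remaining non-degenerate with the prescribed Newton polyhedra---is precisely where the non-degeneracy conditions packaged in $\mathcal S_0$ and the Newton-polyhedra techniques of \cite{BFS} are indispensable. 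The remaining steps are formal consequences of Rees' theorem and of the monotonicity of (mixed) multiplicities under inclusion.
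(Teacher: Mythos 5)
You have correctly identified the intended architecture. This theorem is quoted here from \cite[p.\ 398]{Bivia2009} without proof, but the closest in-paper model is the proof of Lemma \ref{soniguals}, which runs exactly your bookkeeping: pass to $K=\langle g_1,\dots,g_n\rangle$, replace $K^s$ by $\langle g_1^s,\dots,g_n^s\rangle$ via $e(g_1^s,\dots,g_n^s)=s^n e(K)=e(K^s)$ and Rees' multiplicity theorem in the quasi-unmixed ring $\O_n$, and convert $\m^p\subseteq\overline{K^s}$ into a multiplicity stabilization. All of those formal steps in your proposal are correct. Your identity (i) is also correct, though you make it needlessly fragile: you do not need power-stability of the $\mathcal S_0$ characterization, since $e(g_1^s,\dots,g_n^s)=s^n e(g_1,\dots,g_n)=s^n\sigma(I_1,\dots,I_n)=\sigma(I_1^s,\dots,I_n^s)$, the last equality being \cite[Lemma 2.6]{Bivia2009}, which this paper itself invokes in the proof of Theorem \ref{main}.

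The genuine gap is your identity (ii), $e(\langle g_1^s,\dots,g_n^s\rangle+\m^p)=e(I_1^s+\m^p,\dots,I_n^s+\m^p)$: you isolate it as ``the main obstacle'' and then only assert that one ``expects'' the diagonal tuple $(g_1^s+\eta_1,\dots,g_n^s+\eta_n)$ to lie in $\mathcal S_0(I_1^s+\m^p,\dots,I_n^s+\m^p)$. No argument is given, and this step is the entire content of the theorem; it is precisely where the hypothesis $g\in\mathcal S_0$ (that is, $\Gamma_+(g_i)=\Gamma_+(I_i)$, not merely $g\in\mathcal S$) must enter, and it is genuinely false without it: Example \ref{pw} exhibits $(x^3,y^4)$ and $(x^3,y^4+xy)$, both computing the same Rees mixed multiplicity for $I_1=\langle x^3\rangle$, $I_2=\langle xy,y^4\rangle$, yet with \L ojasiewicz exponents $4$ and $9$. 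So ``one expects'' cannot stand in for a verification. Note also that one of the two inequalities in (ii) is soft and was within your reach: a minimal reduction of $\langle g_1^s,\dots,g_n^s\rangle+\m^p$ is generated by general combinations $F_i=\sum_j c_{ij}g_j^s+q_i$ with $q_i\in\m^p$, and multiplying the tuple $(F_1,\dots,F_n)$ by the inverse of the generic invertible matrix $(c_{ij})$ yields generators $g_i^s+q_i'$ with $q_i'\in\m^p$ of the same ideal; since $g_i^s+q_i'\in I_i^s+\m^p$, the principle that any slot-wise tuple has multiplicity at least the mixed multiplicity (Proposition \ref{sigmaexists} with Lemma \ref{reverseincl}) gives $e(\langle g_1^s,\dots,g_n^s\rangle+\m^p)\geq e(I_1^s+\m^p,\dots,I_n^s+\m^p)$. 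The converse inequality is the hard direction: one must check the Newton non-degeneracy criterion of \cite[Theorem 3.10]{Bivia2008} for the diagonal tuple with respect to the monomial ideals $I_i^s+\m^p$, using $\Gamma_+(g_i^s+\eta_i)=\Gamma_+(I_i^s+\m^p)$, the fact that compact faces of $s\Gamma_+(I_i)$ are scalings of those of $\Gamma_+(I_i)$ (so non-degeneracy passes to powers, as in the proof of Corollary \ref{caractnodeg}), and genericity of $\eta_i$ on the faces contributed by $\m^p$. Until that verification is carried out, your proposal is a correct skeleton wrapped around an unproven core.
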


The previous result motivated the following definition.

\begin{defn}\textnormal{\cite[p.\ 399]{Bivia2009}}\label{Lojsetofideals}
Let $(R,m)$ be a Noetherian local ring of dimension $n$. Let
$I_1,\dots,
I_n$ be ideals of $R$ for which $\sigma(I_1,\dots,
I_n)<\infty$. We define the {\it {\L}ojasiewicz exponent of
$I_1,\dots, I_n$} as
$$
\mathscr L_0(I_1,\dots, I_n)=\inf_{s\geq 1}\frac{r(I_1^s,\dots,
I_n^s)}{s}.
$$
\end{defn}

As a consequence of Lemma \ref{rpowers}, we have that
$r(I_1^s,\dots, I_n^s)\leq s r(I_1,\dots, I_n)$, for all $s\in \Z_{\geq 1}$.
Hence $\LL_0(I_1,\dots, I_n)\leq r(I_1,\dots, I_n)$.

The {\L}ojasiewicz exponent given in Definition \ref{Lojsetofideals} is
coherent with the original definition of {\L}ojasiewicz exponent for an
analytic map (see (\ref{Lojineq})), as is shown in the following result.

\begin{lem}\label{soniguals} Let $g=(g_1,\dots, g_n):(\C^n,0)\to (\C^n,0)$ be an analytic map germ such that $g^{-1}(0)=\{0\}$. Then
\begin{equation}\label{coincideixen}
\LL_0(g)=\LL_0(\langle g_1\rangle,\dots, \langle g_n\rangle).
\end{equation}
\end{lem}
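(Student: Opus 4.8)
The plan is to express both sides through the integral-closure description of the integers $r(\cdot)$ and match them power by power. Put $I=\langle g_1,\dots,g_n\rangle$; since $g^{-1}(0)=\{0\}$ the ideal $I$ has finite colength, and by definition $\LL_0(g)=\LL_0(I)$. For a finite-colength ideal one has $e(I^s+\m^r)=e(I^s)$ exactly when $\overline{I^s+\m^r}=\overline{I^s}$, i.e.\ when $\m^r\subseteq\overline{I^s}$ (this is the equality case, due to Rees, of the inequality for multiplicities recalled before Lemma \ref{reverseincl}); hence (\ref{laerre}) gives $r(I^s)=\min\{r:\m^r\subseteq\overline{I^s}\}$, and combining this with (\ref{LI}) yields $\LL_0(I)=\inf_{s\geq1}r(I^s)/s$. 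On the other hand Definition \ref{Lojsetofideals} together with $\langle g_i\rangle^s=\langle g_i^s\rangle$ gives $\LL_0(\langle g_1\rangle,\dots,\langle g_n\rangle)=\inf_{s\geq1}r(\langle g_1^s\rangle,\dots,\langle g_n^s\rangle)/s$. So it suffices to prove, for every fixed $s$, that $r(\langle g_1^s\rangle,\dots,\langle g_n^s\rangle)=r(I^s)$.

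Write $H=\langle g_1^s,\dots,g_n^s\rangle$. By the valuative criterion for integral dependence each product $g_{i_1}\cdots g_{i_s}$ is integral over $H$, whence $\overline H=\overline{I^s}$; in particular $H$ is $\m$-primary and $r(I^s)=\min\{r:\m^r\subseteq\overline H\}$. Moreover a sufficiently general element of the principal ideal $\langle g_i^s\rangle$ is a unit multiple of $g_i^s$, so Proposition \ref{sigmaexists} gives $\sigma(\langle g_1^s\rangle,\dots,\langle g_n^s\rangle)=e(H)$, and by Lemma \ref{reverseincl} the multiplicities $e(\langle g_1^s\rangle+\m^r,\dots,\langle g_n^s\rangle+\m^r)$ are non-decreasing in $r$ with supremum $e(H)$. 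Consequently $r(\langle g_1^s\rangle,\dots,\langle g_n^s\rangle)$ is the least $r$ at which this mixed multiplicity attains the value $e(H)$.

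Everything therefore hinges on the identity $e(\langle g_1^s\rangle+\m^r,\dots,\langle g_n^s\rangle+\m^r)=e(H+\m^r)$, valid for all $r$. Granting it, $r(\langle g_1^s\rangle,\dots,\langle g_n^s\rangle)=\min\{r:e(H+\m^r)=e(H)\}$, and since $H\subseteq H+\m^r$, Rees' theorem rewrites the condition as $\overline{H+\m^r}=\overline H$, that is $\m^r\subseteq\overline H$; by the previous paragraph this minimum equals $r(I^s)$, as required. To prove the identity I would use Proposition \ref{sigmaexists} to write the left-hand side as $e(\langle\ell_1,\dots,\ell_n\rangle)$ for general $\ell_i=u_ig_i^s+q_i$ with $u_i\in\C$ and $q_i\in\m^r$, and then argue that such a generic $n$-tuple generates a reduction of $H+\m^r$. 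The inclusion $\langle\ell_1,\dots,\ell_n\rangle\subseteq H+\m^r$ is immediate; for the converse integral dependence it is enough to verify $\m^r\subseteq\overline{\langle\ell_1,\dots,\ell_n\rangle}$, since then every $q_i$, hence every $g_i^s=u_i^{-1}(\ell_i-q_i)$, and finally all of $H+\m^r$, is integral over $\langle\ell_1,\dots,\ell_n\rangle$.

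The main obstacle is precisely this reduction step, equivalently the upper bound $e(\langle g_1^s\rangle+\m^r,\dots)\leq e(H+\m^r)$. It does not follow from the monotonicity of Lemma \ref{reverseincl}, which yields only the reverse inequality, and it cannot come from a general comparison of a mixed multiplicity with the multiplicity of the sum of the ideals (for example $e(\m^2,\m^3)=6>4=e(\m^2+\m^3)$ in $\O_2$): the equality is a genuine consequence of the common summand $\m^r$ occurring in every ideal. I expect to obtain it through the fibre cone of the $\m$-primary ideal $H+\m^r$: because $g_1^s,\dots,g_n^s$ together with a generating set of $\m^r$ generate $H+\m^r$, the subspaces $\C g_i^s+\m^r$ from which the $\ell_i$ are drawn jointly surject onto the degree-one part of the fibre cone, and a Northcott--Rees type genericity argument then shows that $n$ such general elements form a homogeneous system of parameters there, i.e.\ a reduction of $H+\m^r$. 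Alternatively this reduction property should be read off directly from the description of the family $\mathcal S$ in \cite[Theorem 3.10]{Bivia2008}.
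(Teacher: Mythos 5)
Your reduction is exactly the paper's: both arguments funnel everything into the single identity $e(\langle g_1^s\rangle+\m^r,\dots,\langle g_n^s\rangle+\m^r)=e(H+\m^r)$, where $H=\langle g_1^s,\dots,g_n^s\rangle$, after which $\overline H=\overline{I^s}$, Rees' multiplicity theorem and (\ref{LI}) finish the proof; that surrounding part of your write-up is correct. The genuine gap is the one step you yourself flag as the ``main obstacle'': the inequality $e(\langle g_1^s\rangle+\m^r,\dots,\langle g_n^s\rangle+\m^r)\leq e(J)$, $J=H+\m^r$, i.e.\ that generic $\ell_i=u_ig_i^s+q_i$ generate a reduction of $J$. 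The justification you sketch --- the subspaces $V_i=\C g_i^s+\m^r$ jointly surject onto $J/\m J$, hence a Northcott--Rees genericity argument produces a homogeneous system of parameters of the fibre cone --- rests on a false general principle: joint surjectivity onto the degree-one part does not make a generic tuple $(\ell_1,\dots,\ell_n)\in V_1\times\cdots\times V_n$ a system of parameters. For instance, in the two-dimensional graded ring $F=\C[a,b,c]/(ab)$ with $V_1=\C a$ and $V_2=\C b+\C c$ one has $V_1+V_2=F_1$, yet $F/(\ell_1,\ell_2)$ is one-dimensional for every choice $\ell_i\in V_i$. Any correct argument must use the specific feature that each $V_i$ contains all of $\m^r$, which your sketch never does. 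Your fallback, \cite[Theorem 3.10]{Bivia2008}, does not close the gap either: that characterization of $\mathcal S(I_1,\dots,I_n)$ concerns monomial ideals, and $\langle g_i^s\rangle+\m^r$ is not monomial.

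The step can in fact be closed with precisely the two tools the paper names for it. Apply Proposition \ref{sigmaexists} to the constant tuple $(J,\dots,J)$ with the generating system consisting of $g_1^s,\dots,g_n^s$ together with monomial generators of $\m^r$: this gives $e(J)=e(h_1,\dots,h_n)$ for generic $h_j=\sum_i c_{ji}g_i^s+q_j$ with $q_j\in\m^r$. Genericity makes the matrix $C=(c_{ji})$ invertible, and Gaussian elimination replaces $h_1,\dots,h_n$ by generators $h_i'=g_i^s+q_i'$, $q_i'\in\m^r$, of the \emph{same} ideal, so that $h_i'\in\langle g_i^s\rangle+\m^r$. Since a sufficiently general element of a principal ideal is a unit times its generator, Proposition \ref{sigmaexists} yields $\sigma(\langle h_1'\rangle,\dots,\langle h_n'\rangle)=e(h_1',\dots,h_n')=e(J)$, and Lemma \ref{reverseincl} applied to the inclusions $\langle h_i'\rangle\subseteq\langle g_i^s\rangle+\m^r$ gives $e(J)\geq\sigma(\langle g_1^s\rangle+\m^r,\dots,\langle g_n^s\rangle+\m^r)$, which is the missing inequality (the reverse one you already obtained from Lemma \ref{reverseincl}). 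With this patch your argument coincides with the paper's proof, which asserts exactly this identity ``can be proved using Proposition \ref{sigmaexists} and Lemma \ref{reverseincl}''.
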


\begin{proof} Let us fix integers $r,s\geq 1$. Let $I$ denote the ideal generated by the components of $g$.
It can be proved using Proposition \ref{sigmaexists} and Lemma \ref{reverseincl} that
$$
e(\langle g_1^s\rangle+m^r,\dots, \langle g_n^s\rangle+m^r)=e(\langle g_1^s,\dots, g_n^s\rangle+m^r).
$$
Moreover $\overline{I^s}=\overline{\langle g_1^s, \dots, g_n^s\rangle}$ (see for instance \cite[p.\ 344]{HunekeSwanson2006}). Then,
by the Rees' multiplicity theorem (see \cite[p.\  222]{HunekeSwanson2006}) we obtain that
$e(g_1^s,\dots, g_n^s)=e(\langle g_1^s\rangle+m^r,\dots, \langle g_n^s\rangle+m^r)$ if and only if $m^r\subseteq \overline{I^s}$.
Hence we deduce that
$$
r(\langle g_1^s\rangle, \dots, \langle g_n^s\rangle)=\min\left\{r: m^r\subseteq \overline{I^s}\right\}.
$$
Therefore
\begin{align*}
\LL_0(\langle g_1\rangle,\dots, \langle g_n\rangle)&=\min_{s\geq 1}\frac{\min\left\{r: m^r\subseteq \overline{I^s}\right\}}{s}\\
&=\min\bigg\{\frac pq: p,q\in\Z_{\geq 1},\,\, m^p\subseteq \overline{I^q}\bigg\}=\LL_0(I),
\end{align*}
where the last equality follows from (\ref{LI}).
\end{proof}

Under the hypothesis of Definition \ref{Lojsetofideals}, let us denote
by $J$ a proper ideal of $R$. An easy application of Lemma \ref{reverseincl} shows that
$$
\sigma(I_1,\dots, I_n)=\max_{r\in\Z_+}\,\sigma(I_1+J^r,\dots, I_n+J^r).
$$
Hence, let us define
\begin{equation}\label{laerresubJ}
r_J(I_1,\dots, I_n)=\min\big\{r\in\Z_+: \sigma(I_1,\dots,
I_n)=\sigma(I_1+J^r,\dots, I_n+J^r)\big\}.
\end{equation}

Let $I$ be an ideal of $R$ of finite colength. Then we denote by
$r_J(I)$ the number $r_J(I,\dots, I)$, where $I$ is repeated $n$ times.
We deduce from the Rees' multiplicity theorem (see \cite[p.\ 222]{HunekeSwanson2006}) that if $R$ is
quasi-unmixed then $r_J(I)=\min\{r\geq 1: J^r\subseteq\overline I\}$.

\begin{lem}\label{rpowers}\cite[p.\ 581]{BiviaEncinas2011}
Let $(R,m)$ be a Noetherian local ring of dimension $n$. Let
$I_1,\dots,
I_n$ be ideals of $R$ such that $\sigma(I_1,\dots, I_n)<\infty$ and
let $J$ be a proper ideal of $R$. Then
\begin{align*}
r_J(I_1^s,\dots, I_n^s)&\leq sr_J(I_1,\dots, I_n)\\
r_{J^s}(I_1,\dots, I_n)&\geq \frac{1}{s}r_J(I_1,\dots, I_n)
\end{align*}
for any integer $s\geq 1$.
\end{lem}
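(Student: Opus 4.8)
The plan is to treat the two inequalities separately, deriving both from the monotonicity of the Rees mixed multiplicity recorded in Lemma \ref{reverseincl}, together with the identity $\sigma(I_1,\dots,I_n)=\max_{r}\sigma(I_1+J^r,\dots,I_n+J^r)$ noted just before (\ref{laerresubJ}). The central fact I will need is a homogeneity property of $\sigma$ under taking powers, and proving that is where the real content lies.

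First I would dispose of the second inequality $r_{J^s}(I_1,\dots,I_n)\geq \frac1s\,r_J(I_1,\dots,I_n)$, which is essentially formal. Since $J^{r+1}\subseteq J^r$, Lemma \ref{reverseincl} shows that $r\mapsto \sigma(I_1+J^r,\dots,I_n+J^r)$ is nondecreasing and bounded above by $\sigma(I_1,\dots,I_n)$; hence once the value $\sigma(I_1,\dots,I_n)$ is attained it persists, and $r_J(I_1,\dots,I_n)$ is precisely the threshold at which it first occurs. Setting $t=r_{J^s}(I_1,\dots,I_n)$, definition (\ref{laerresubJ}) gives $\sigma(I_1,\dots,I_n)=\sigma(I_1+(J^s)^t,\dots,I_n+(J^s)^t)=\sigma(I_1+J^{st},\dots,I_n+J^{st})$, so the threshold property forces $st\geq r_J(I_1,\dots,I_n)$, that is $s\,r_{J^s}(I_1,\dots,I_n)\geq r_J(I_1,\dots,I_n)$.

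For the first inequality, put $r=r_J(I_1,\dots,I_n)$; the goal is to show $\sigma(I_1^s,\dots,I_n^s)=\sigma(I_1^s+J^{sr},\dots,I_n^s+J^{sr})$, which by (\ref{laerresubJ}) yields $r_J(I_1^s,\dots,I_n^s)\leq sr$. One direction is automatic, since $I_i^s\subseteq I_i^s+J^{sr}$ gives $\sigma(I_1^s+J^{sr},\dots)\leq \sigma(I_1^s,\dots)$ by Lemma \ref{reverseincl}. For the reverse I would use the inclusion $I_i^s+J^{sr}\subseteq (I_i+J^r)^s$, valid because $I_i^s$ and $J^{sr}=(J^r)^s$ are the first and last terms of the binomial expansion of $(I_i+J^r)^s$; Lemma \ref{reverseincl} then gives $\sigma(I_1^s+J^{sr},\dots)\geq \sigma((I_1+J^r)^s,\dots,(I_n+J^r)^s)$, and it remains to identify the right-hand side with $\sigma(I_1^s,\dots,I_n^s)$.

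The main obstacle is therefore the homogeneity relation $\sigma(K_1^s,\dots,K_n^s)=s^n\,\sigma(K_1,\dots,K_n)$ for any ideals $K_i$ with $\sigma(K_1,\dots,K_n)<\infty$; granting it, and using that $r=r_J$ forces $\sigma(I_1+J^r,\dots)=\sigma(I_1,\dots,I_n)$, we get $\sigma((I_1+J^r)^s,\dots)=s^n\sigma(I_1+J^r,\dots)=s^n\sigma(I_1,\dots,I_n)=\sigma(I_1^s,\dots,I_n^s)$, closing the argument. To establish the homogeneity I would reduce to the classical scaling $e(L_1^s,\dots,L_n^s)=s^n\,e(L_1,\dots,L_n)$ for $m$-primary ideals and combine it with the definition (\ref{sigma}). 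Concretely, for $\rho$ beyond the stabilization index (\ref{laerre}) of $\sigma(K_1,\dots,K_n)$, the sandwich $K_i^s+m^{s\rho}\subseteq (K_i+m^\rho)^s\subseteq K_i^s+m^\rho$ together with monotonicity of the mixed multiplicity of $m$-primary ideals yields $e(K_1^s+m^{s\rho},\dots)\geq e((K_1+m^\rho)^s,\dots)=s^n\sigma(K_1,\dots,K_n)\geq e(K_1^s+m^\rho,\dots)$; letting $\rho\to\infty$, the two outer terms both converge to $\sigma(K_1^s,\dots,K_n^s)$ by (\ref{sigma}), squeezing the constant middle term to the same value. The delicate points to verify are that $\sigma(I_1^s,\dots,I_n^s)<\infty$, which follows from Proposition \ref{sigmaexists} applied to the elements $g_i^s\in I_i^s$ for general $g_i$ realizing $\sigma(I_1,\dots,I_n)$, and that all the ideals appearing are $m$-primary so that the $m$-primary scaling applies on both sides simultaneously.
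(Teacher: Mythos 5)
Your proof is correct and takes essentially the same route as the argument the paper cites from \cite{BiviaEncinas2011}: the monotonicity/threshold characterization of $r_J$ via Lemma \ref{reverseincl} for the second inequality, and the inclusion $I_i^s+J^{sr}\subseteq (I_i+J^r)^s$ combined with the homogeneity $\sigma(K_1^s,\dots,K_n^s)=s^n\sigma(K_1,\dots,K_n)$ for the first. The only difference is that you reprove that homogeneity (which the paper simply quotes as \cite[Lemma 2.6]{Bivia2009}; cf.\ relation (\ref{primera}) in the proof of Theorem \ref{main}) by a valid sandwich argument $K_i^s+m^{s\rho}\subseteq (K_i+m^\rho)^s\subseteq K_i^s+m^\rho$ reducing to the classical scaling of mixed multiplicities of $m$-primary ideals, which also yields the finiteness of $\sigma(I_1^s,\dots,I_n^s)$ for free and works verbatim for an arbitrary proper ideal $J$, consistent with the paper's remark following the lemma.
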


We remark that the previous lemma was proven in \cite{BiviaEncinas2011} under the assumption that
the ideal $J$ has finite colength, but the same proof works equally for any proper ideal $J$ of $\O_n$.

If $I$ is an ideal of $\O_n$ of finite colength and $J$ is a proper ideal of $\O_n$, then
the {\it \L ojasiewicz exponent of $I$ with respect to $J$}, denoted by $\LL_J(I)$, is defined
as the infimum of those $\alpha>0$ such that there exist a
constant $C>0$ and an open neighbourhood $U$ of $0\in\C^n$ for which
\begin{equation} \label{LojaExpGen}
\sup_j\vert h_j(x)\vert^\alpha\leq C\sup_i\vert g_i(x)\vert,
\end{equation}
for all $x\in U$, where $\{h_j: j=1,\dots, r\}$ and $\{g_i: i=1,\dots, s\}$ are
generating systems of $J$ and $I$, respectively. As a consequence of \cite[\S 7]{LT2008} we have that
$\LL_J(I)$ is a rational number and
\begin{equation}\label{LIGeneral}
\LL_{J}(I)=\min\bigg\{\frac{p}{q}: p,q\in\Z_{\geq 1},\,J^p\subseteq\overline{I^q}\bigg\}.
\end{equation}

If $g:(\C^n,0)\to (\C^m,0)$ is an analytic map germ such that $g^{-1}(0)=\{0\}$ and $J$ is a proper ideal of $\O_n$
then we denote by $\LL_J(g)$ the {\L}ojasiewicz exponent $\LL_J(I)$, where $I$ is the ideal
generated by the component functions of $g$.

Now we extend Definition \ref{Lojsetofideals} by considering $r_J(I_1,\dots, I_n)$ instead of $r(I_1,\dots, I_n)$.

\begin{defn}\label{DefLJI}\cite[p.\ 581]{BiviaEncinas2011}
Let $(R,m)$ be a Noetherian local ring of dimension $n$. Let $I_1,\dots,
I_n$ be ideals of $R$ such that $\sigma(I_1,\dots, I_n)<\infty$. Let
$J$ be a proper ideal of $R$.
We define the {\it {\L}ojasiewicz exponent of $I_1,\dots, I_n$ with
respect to $J$}, denoted by $\LL_J(I_1,\dots, I_n)$, as
\begin{equation}\label{LJI}
\LL_J(I_1,\dots, I_n)=\inf_{s\geq 1}\frac{r_J(I_1^s,\dots, I_n^s)}{s}.
\end{equation}
\end{defn}

Under the conditions of the previous definition, we observe that
$\LL_{J}(I_{1},\ldots,I_{n})$ is expressed as a limit inferior (see \cite[p.\ 581]{BiviaEncinas2011} for details), that is:
\begin{equation*}
    \LL_{J}(I_{1},\ldots,I_{n})=
    \liminf_{s\to\infty}\frac{r_J(I_1^s,\dots, I_n^s)}{s}.
\end{equation*}

If $I$ is an $m$-primary ideal of $R$, then we denote by $\LL_J(I)$
the number $\LL_J(I,\dots, I)$, where $I$ is repeated $n$ times. We remark that
when $R$ is quasi-unmixed then $\LL_J(I)$ is given by
\begin{equation}\label{LIGeneral2}
\LL_{J}(I)=\min\bigg\{\frac{p}{q}: p,q\in\Z_{\geq 1},\,J^p\subseteq\overline{I^q}\bigg\},
\end{equation}
by a direct application of the Rees' multiplicity theorem.

We point out that we are denoting $\LL_m(I_1,\dots, I_n)$ by $\LL_0(I_1,\dots, I_n)$ and that
$\LL_J(I_1,\dots, I_n)$ is not defined when $J$ is the zero ideal. If $I_i$ is an ideal of $\O_n$,
for all $i=1,\dots, n$, then the subscript in $\LL_0(I_1,\dots, I_n)$
corresponds to the commonly used notation to refer to \L ojasiewicz exponents in a neighbourhood of $0\in\C^n$, as defined
in (\ref{Lojineq}). If $J$ is a proper ideal of $\O_n$, we also remark that the result analogous to Lemma \ref{soniguals} obtained by writing
$\LL_0$ instead of $\LL_J$ in equality (\ref{coincideixen}) also holds; it follows by a straightforward reproduction of
the proof of Lemma \ref{soniguals} consisting of replacing $m$ by $J$.

For the sake of completeness, we recall the following two results, which will be applied in the
next section.

\begin{lem}\label{transit}\cite[p.\ 582]{BiviaEncinas2011}
Let $(R,m)$ be a quasi-unmixed Noetherian local ring of dimension
$n$. Let $I_1,\dots,
I_n$ be ideals of $R$ for which $\sigma(I_1,\dots, I_n)<\infty$. If
$J_1,J_2$ are proper ideals of $R$ such that $J_2$ has finite colength then
$$
\LL_{J_1}(I_1,\dots, I_n) \leq
\LL_{J_1}(J_2)\LL_{J_2}(I_1,\dots,I_n).
$$
\end{lem}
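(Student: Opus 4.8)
The plan is to run a transitivity argument at the level of the Rees mixed multiplicities $\sigma$ and the threshold numbers $r_J$, exploiting that $\sigma$ depends only on integral closures. Since $R$ is quasi-unmixed and $J_2$ has finite colength, formula (\ref{LIGeneral2}) supplies integers $p,q\in\Z_{\geq 1}$ with
$$
J_1^p\subseteq\overline{J_2^q},\qquad \frac pq=\LL_{J_1}(J_2).
$$
Raising this inclusion to powers and using the standard fact $\overline A\cdot\overline B\subseteq\overline{AB}$, one obtains $J_1^{pk}\subseteq\overline{J_2^{qk}}$ for every $k\in\Z_{\geq 1}$. This is the bridge that converts containments measured by $J_2$ into containments measured by $J_1$.

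For each $s\geq 1$ write $t_s=r_{J_2}(I_1^s,\dots,I_n^s)$, so that $\sigma(I_1^s,\dots,I_n^s)=\sigma(I_1^s+J_2^{t_s},\dots,I_n^s+J_2^{t_s})$ by definition (\ref{laerresubJ}). Fixing $s$, I put $k=\lceil t_s/q\rceil$ and $N=pk$. Since $qk\geq t_s$ we have $J_2^{qk}\subseteq J_2^{t_s}$, whence $J_1^{N}\subseteq\overline{J_2^{qk}}\subseteq\overline{J_2^{t_s}}$. I would then observe the chain $I_i^s\subseteq I_i^s+J_1^N\subseteq\overline{I_i^s+J_2^{t_s}}$ for each $i$, the second inclusion holding because $J_1^N\subseteq\overline{J_2^{t_s}}\subseteq\overline{I_i^s+J_2^{t_s}}$ and $I_i^s\subseteq\overline{I_i^s+J_2^{t_s}}$. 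Applying the monotonicity of Lemma \ref{reverseincl} together with the invariance of $\sigma$ under integral closure (mixed multiplicities depend only on integral closures, and $\overline{A+B}=\overline{\overline A+\overline B}$) sandwiches
$$
\sigma(I_1^s,\dots,I_n^s)\geq\sigma(I_1^s+J_1^N,\dots,I_n^s+J_1^N)\geq\sigma\bigl(\overline{I_1^s+J_2^{t_s}},\dots,\overline{I_n^s+J_2^{t_s}}\bigr)=\sigma(I_1^s,\dots,I_n^s),
$$
forcing all three terms to coincide. In particular $r_{J_1}(I_1^s,\dots,I_n^s)\leq N=p\lceil t_s/q\rceil$.

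It then remains to pass to the limit. Using $\lceil t_s/q\rceil<t_s/q+1$ gives
$$
\frac{r_{J_1}(I_1^s,\dots,I_n^s)}{s}\leq\frac{p}{q}\cdot\frac{t_s}{s}+\frac{p}{s},
$$
and taking $\liminf_{s\to\infty}$ — recalling that both $\LL_{J_1}(I_1,\dots,I_n)$ and $\LL_{J_2}(I_1,\dots,I_n)$ are the limits inferior of the corresponding quotients $r_J(I_1^s,\dots,I_n^s)/s$, and that $p/s\to0$ — yields
$$
\LL_{J_1}(I_1,\dots,I_n)\leq\frac pq\,\liminf_{s\to\infty}\frac{t_s}{s}=\LL_{J_1}(J_2)\,\LL_{J_2}(I_1,\dots,I_n),
$$
which is the asserted inequality. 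The main obstacle is the middle step: securing the invariance of $\sigma$ under passage to integral closure and verifying the inclusion $I_i^s+J_1^N\subseteq\overline{I_i^s+J_2^{t_s}}$ so that the sandwich closes and $r_{J_1}$ can be bounded by $p\lceil t_s/q\rceil$; once this is in place, the limit computation and the appeal to the liminf formula for $\LL_J$ are routine.
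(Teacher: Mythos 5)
Your argument is correct, and it follows what is essentially the intended route: the paper itself gives no proof of Lemma \ref{transit} but cites \cite{BiviaEncinas2011}, where the lemma is proved by exactly this mechanism --- using quasi-unmixedness and (\ref{LIGeneral2}) to pick $p,q$ with $J_1^p\subseteq\overline{J_2^q}$ and $p/q=\LL_{J_1}(J_2)$, converting this into the bound $r_{J_1}(I_1^s,\dots,I_n^s)\leq p\lceil r_{J_2}(I_1^s,\dots,I_n^s)/q\rceil$ via Lemma \ref{reverseincl} and the invariance of mixed multiplicities under integral closure, and passing to the $\liminf$ expression for $\LL_J$. Your sandwich step is sound, since the first application of Lemma \ref{reverseincl} supplies the finiteness of $\sigma(I_1^s+J_1^N,\dots,I_n^s+J_n^N\!$ -- with $J_i^N$ read as $J_1^N$ throughout -- needed for the second, and the ideals $I_i^s+J_2^{t_s}$ are $m$-primary so that $\sigma=e$ and integral-closure invariance of $e$ applies.
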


\begin{prop}\label{uppers} Let $(R,m)$ be a Noetherian
local ring of dimension $n$. Let $J$ be a proper ideal of $R$.
For each $i=1,\dots, n$ let us consider ideals $I_i$ and $J_i$ of $R$
such that $I_i\subseteq J_i$ and $\sigma(I_1,\dots,
I_n)=\sigma(J_1,\dots,J_n)<\infty$. Then
\begin{equation}\label{monot}
\LL_J(I_1,\dots, I_n)\leqslant \LL_J(J_1,\dots, J_n),
\end{equation}
\end{prop}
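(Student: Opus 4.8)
The plan is to prove the inequality term by term in the infima that define the two {\L}ojasiewicz exponents. Since
$$
\LL_J(I_1,\dots,I_n)=\inf_{s\geq 1}\frac{r_J(I_1^s,\dots,I_n^s)}{s},
\qquad
\LL_J(J_1,\dots,J_n)=\inf_{s\geq 1}\frac{r_J(J_1^s,\dots,J_n^s)}{s},
$$
it suffices to establish that $r_J(I_1^s,\dots,I_n^s)\leq r_J(J_1^s,\dots,J_n^s)$ for every $s\geq 1$; dividing by $s$ and taking infima then yields (\ref{monot}). The whole argument rests on upgrading the hypothesis $\sigma(I_1,\dots,I_n)=\sigma(J_1,\dots,J_n)$ to the analogous equality for the $s$-th powers, namely $\sigma(I_1^s,\dots,I_n^s)=\sigma(J_1^s,\dots,J_n^s)$, so I would isolate that as the first task.

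The main obstacle, and the key step, is the homogeneity of the Rees mixed multiplicity under powers, i.e.\ $\sigma(K_1^s,\dots,K_n^s)=s^n\,\sigma(K_1,\dots,K_n)$ for any ideals $K_1,\dots,K_n$ of $R$ with $\sigma(K_1,\dots,K_n)<\infty$. To prove this I would work from the definition $\sigma(K_1,\dots,K_n)=\max_r e(K_1+m^r,\dots,K_n+m^r)$ and exploit the elementary containments
$$
K_i^s+m^{rs}\ \subseteq\ (K_i+m^r)^s\ \subseteq\ K_i^s+m^r,
$$
valid for all $i$ and all $r\geq 1$ (the right-hand one holds because every term of $(K_i+m^r)^s$ other than $K_i^s$ already lies in $m^r$). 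All ideals involved are $m$-primary, so $\sigma$ agrees with $e$ on them; applying the monotonicity of Lemma \ref{reverseincl} together with the classical homogeneity $e((K_1+m^r)^s,\dots,(K_n+m^r)^s)=s^n\,e(K_1+m^r,\dots,K_n+m^r)$ (see \cite[\S 17]{HunekeSwanson2006}) gives
$$
e(K_1^s+m^{rs},\dots,K_n^s+m^{rs})\ \geq\ s^n\,e(K_1+m^r,\dots,K_n+m^r)\ \geq\ e(K_1^s+m^r,\dots,K_n^s+m^r).
$$
Taking the maximum over $r$ and noting that the two outer terms both stabilize to $\sigma(K_1^s,\dots,K_n^s)$ for $r$ large, while the middle one stabilizes to $s^n\sigma(K_1,\dots,K_n)$, pins everything to a common value and yields the homogeneity. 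Combining it with the hypothesis gives, for every $s$, the desired $\sigma(I_1^s,\dots,I_n^s)=s^n\sigma(I_1,\dots,I_n)=s^n\sigma(J_1,\dots,J_n)=\sigma(J_1^s,\dots,J_n^s)$.

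With the power equality in hand, the comparison of the $r_J$'s is a short monotonicity argument. Fix $s\geq 1$ and set $r_0=r_J(J_1^s,\dots,J_n^s)$, so that $\sigma(J_1^s+J^{r_0},\dots,J_n^s+J^{r_0})=\sigma(J_1^s,\dots,J_n^s)$ by the definition of $r_J$ in (\ref{laerresubJ}). From the inclusions $I_i^s\subseteq I_i^s+J^{r_0}\subseteq J_i^s+J^{r_0}$ and two applications of Lemma \ref{reverseincl} I would obtain
$$
\sigma(I_1^s,\dots,I_n^s)\ \geq\ \sigma(I_1^s+J^{r_0},\dots,I_n^s+J^{r_0})\ \geq\ \sigma(J_1^s+J^{r_0},\dots,J_n^s+J^{r_0})=\sigma(J_1^s,\dots,J_n^s).
$$
Since the two extreme terms coincide by the power equality just proved, the middle term equals $\sigma(I_1^s,\dots,I_n^s)$; hence $r_0$ realizes the minimum defining $r_J(I_1^s,\dots,I_n^s)$, and therefore $r_J(I_1^s,\dots,I_n^s)\leq r_0=r_J(J_1^s,\dots,J_n^s)$. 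This is exactly the term-by-term inequality, and the proof concludes as indicated in the first paragraph.
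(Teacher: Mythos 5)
Your proof is correct and takes essentially the same route as the paper's: the paper's proof is just a pointer to the proof of \cite[Proposition 4.7]{Bivia2009} (with $m$ replaced by $J$), which consists precisely of your two steps — the homogeneity $\sigma(K_1^s,\dots,K_n^s)=s^n\sigma(K_1,\dots,K_n)$ (cited elsewhere in this paper as \cite[Lemma 2.6]{Bivia2009}) followed by the monotonicity comparison of the $r_J$'s via Lemma \ref{reverseincl}. The only difference is that you prove the homogeneity lemma inline, via the sandwich $K_i^s+m^{rs}\subseteq (K_i+m^r)^s\subseteq K_i^s+m^r$ (which also yields the needed finiteness of $\sigma(K_1^s,\dots,K_n^s)$), making the argument self-contained where the paper outsources it.
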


\begin{proof}
It follows by replacing $m$ by $I$ in the proof of \cite[Proposition 4.7]{Bivia2009}
\end{proof}

We remark that if $(R,m)$ is a Noetherian quasi-unmixed
local ring of dimension $n$ and $I_1,I_2$ are ideals of
$R$ of finite colength such that $I_1\subseteq I_2$ then $\LL_J(I_1)\geq \LL_J(J_2)$,
as a consequence of (\ref{LIGeneral2}). However, the analogous inequality for \L ojasiewicz exponents of sets of ideals
does not hold in general, as the following example shows.

\begin{ex}
Let us consider the ideals $I_1,I_2$ of $\O_2$ defined by $I_1=\langle x^3\rangle$
and $I_2=\langle y^3\rangle$. Let $J_1,J_2$ be the ideals of $\O_2$
defined by $J_1=\langle x^3, xy\rangle$, $J_2=I_2$. Then we have that $I_i\subseteq J_i$, $i=1,2$,
but $\LL_0(I_1,I_2)=3$ and $\LL_0(J_1,J_2)=6$.
\end{ex}

\section{Newton filtrations}\label{mainsection}

Let us fix a Newton polyhedron $\Gamma_+\subseteq\R^n_+$. If $v\in\R^n_+\smallsetminus\{0\}$ then we define
\begin{align*}
\ell(v,\Gamma_+)&=\min\{\langle v,k\rangle:k\in\Gamma_+\}\\
\Delta(v,\Gamma_+)&=\big\{k\in\Gamma_+: \langle
v,k\rangle=\ell(v,\Gamma_+)\big\},
\end{align*}
where $\langle \, ,\rangle$ stands for the standard scalar product in $\R^n$.
A {\it face} of $\Gamma_+$ is any set of the form $\Delta(v,\Gamma_+)$, for some $v\in\R^n_+\smallsetminus\{0\}$.
Hence we also say that $\Delta(v,\Gamma_+)$ is the {\it face of $\Gamma_+$
supported by $v$}. The {\it dimension} of a face $\Delta$ of $\Gamma_+$ is the minimum of the dimensions
of the affine subspaces of $\R^n$ containing $\Delta$. If $\Delta$ is a face of $\Gamma_+$ of dimension $n-1$ then we say that $\Delta$ is a {\it facet} of $\Gamma_+$.

It is easy to observe that a face $\Delta$ of $\Gamma_+$ is compact if and only if it is supported
by a vector $v\in (\R_+\smallsetminus\{0\})^n$. The union of all compact faces
of $\Gamma_+$ will be denoted by $\Gamma$; this is also known as the {\it Newton boundary of $\Gamma_+$}. We remark
that $\Gamma_+$ determines and is determined by $\Gamma$, since $\Gamma_+=\Gamma+\R^n_+$.

We denote by $\Gamma_-$ the union of all segments joining the origin and some point of $\Gamma$. Therefore $\Gamma_-$ is a compact subset of $\R^n_+$.
If $\Gamma_+$ is convenient, then $\Gamma_-$ is equal to the closure of $\R^n_+\smallsetminus \Gamma_+$.

If $\Delta$ is a face of $\Gamma_+$, then $C(\Delta)$ denotes the cone formed by all half-rays emanating from the
origin and passing through some point of $\Delta$.

We say that a vector $v\in\Z_+^n\smallsetminus\{0\}$ is {\it
primitive} when the non-zero coordinates of $v$ are mutually prime
integer numbers. Then any facet of $\Gamma_+$ is
supported by a unique primitive vector of $\Z^n_+$. Let us denote by $\mathscr
F(\Gamma_+)$ the set of primitive vectors of $\R^n_+$ supporting
some facet of $\Gamma_+$ and by $\mathscr F_c(\Gamma_+)$
the set of vectors $v\in\mathscr F(\Gamma_+)$ such that $\Delta(v,\Gamma_+)$ is compact.
Let us remark that if $\Gamma_+$ is convenient then $\mathscr F(\Gamma_+)=\mathscr F_c(\Gamma_+)\cup\{e_1,\dots, e_n\}$,
where $e_1,\dots, e_n$ is the canonical basis of $\R^n$.

Let us suppose that $\mathscr F_c(\Gamma_+)=\{v^1,\dots, v^r\}$. Therefore $\ell(v^i,\Gamma_+)\neq 0$, for all
$i=1,\dots, r$. Let us denote by $M_\Gamma$ the least common
multiple of the set of integers $\{\ell(v^1,\Gamma_+), \dots,
\ell(v^r,\Gamma_+)\}$. Hence we define the {\it filtrating map} associated to $\Gamma_+$ as the map
$\phi:\R^n_+\to \R_+$ given by
$$
\phi_\Gamma(k)=\min\bigg\{\frac{M_\Gamma}{\ell(v^i,\Gamma_+)}\langle
k, v^i\rangle: i=1,\dots, r\bigg\},\quad \mathrm{ for\,\, all}\,\,
k\in\R^n_+.
$$

We observe that $\phi_\Gamma(\Z^n_+)\subseteq\Z^n_+$, $\phi_\Gamma(k)=M_\Gamma$, for all $k\in\Gamma$, and the map $\phi_\Gamma$ is linear on each
cone $C(\Delta)$, where $\Delta$ is any compact face of $\Gamma_+$.

Let us define the map $\nu_\Gamma:\O_n\to \R_+\cup\{+\infty\}$ by $\nu_\Gamma(h)=\min\{\phi_\Gamma(k): k\in\supp(h)\}$, for all
$h\in\O_n$, $h\neq 0$; we set $\nu_\Gamma(0)=+\infty$. We refer to $\nu_\Gamma$ as the {\it
Newton filtration} induced by $\Gamma_+$ (see also \cite{BFS, K}).

From now on, we will assume that $\Gamma_+$ is a convenient Newton polyhedron in $\R^n_+$.

Let $h\in\O_n$ and let $h=\sum_ka_kx^k$ be the Taylor expansion of $h$ around the origin. If $A$ is a compact subset of $\R^n$ then we denote by
$h_A$ the sum of all terms $a_kx^k$ such that $k\in A$. If $\supp(h)\cap A=\emptyset$, then we set $h_A=0$. Let $J$ be an ideal of $\O_n$ and let $g_1,\dots, g_s$ be a generating system of $J$. We recall that $J$ is said to be {\it Newton non-degenerate} (see \cite{Bivia2005} or \cite{Saia1996}) when
$$
\big\{x\in\C^n: (g_1)_\Delta(x)=\cdots=(g_s)_\Delta(x)=0\big\}\subseteq \big\{x\in\C^n: x_1\cdots x_n=0\big\},
$$
as set germs at $0\in\C^n$, for each compact face $\Delta$ of $\Gamma_+(J)$ (see Theorem \ref{caractdelta}). It is immediate to check that this definition does not depend on the chosen generating system of $J$. In particular, any monomial ideal is Newton non-degenerate.

The next result compares the asymptotic Samuel function and the Newton filtration.

\begin{prop}\label{Impa}\cite[p.\ 26]{Bivia2005} Let $J\subseteq \O_n$ be an ideal of finite colength.
Let $\Gamma$ denote the Newton boundary of $\Gamma_+(J)$ and let $M=M_\Gamma$. Then $M\overline
\nu_J\leq \nu_\Gamma$ and equality holds if and only if $J$ is Newton non-degenerate.
\end{prop}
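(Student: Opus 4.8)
The plan is to compare both functions with the monomial ideal $J_0$ generated by all $x^k$ with $k\in\Gamma_+\cap\Z^n_+$, which has $\Gamma_+(J_0)=\Gamma_+(J)=\Gamma_+$ and satisfies $J\subseteq J_0$ (every generator of $J$ has its support in $\Gamma_+$). Write $\mathscr F_c(\Gamma_+)=\{v^1,\dots,v^r\}$ and recall that each $v^i$ induces a monomial valuation $\ord_{v^i}(h)=\min\{\langle v^i,k\rangle:k\in\supp(h)\}$, for which $\ord_{v^i}(J)=\ell(v^i,\Gamma_+)$ because $\Gamma_+(J)=\Gamma_+$. Interchanging the two minima in the definition of $\nu_\Gamma$ gives
\begin{equation*}
\nu_\Gamma(h)=\min_{k\in\supp(h)}\phi_\Gamma(k)=\min_{1\leq i\leq r}\frac{M}{\ell(v^i,\Gamma_+)}\,\ord_{v^i}(h).
\end{equation*}
The first step is the clean identity $M\overline\nu_{J_0}=\nu_\Gamma$ on all of $\O_n$. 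Since $\overline{J_0^{\,p}}$ is exactly the monomial ideal of the lattice points of $p\Gamma_+$, one has $h\in\overline{J_0^{\,p}}$ iff $\ord_{v^i}(h)\geq p\,\ell(v^i,\Gamma_+)$ for every $i$; a direct computation then yields $\overline\nu_{J_0}(h)=\min_i \ord_{v^i}(h)/\ell(v^i,\Gamma_+)$, which by the displayed formula equals $\tfrac1M\nu_\Gamma(h)$. Because $J\subseteq J_0$ forces $\overline{J^{\,p}}\subseteq\overline{J_0^{\,p}}$ and hence $\overline\nu_J\leq\overline\nu_{J_0}$, the asserted inequality $M\overline\nu_J\leq\nu_\Gamma$ follows at once.

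For the equality statement I would reduce it to integral closures. By Rees' description of the asymptotic Samuel function, $\overline{I^n}=\{h:\overline\nu_I(h)\geq n\}$ for every ideal $I$ and every $n\geq1$, while $\overline{I^n}=\overline{(\overline I)^{\,n}}$. Consequently $\overline\nu_J=\overline\nu_{J_0}$ as functions iff $\overline{J^{\,n}}=\overline{J_0^{\,n}}$ for all $n$, which in turn is equivalent to $\overline J=\overline{J_0}$ (the forward direction is the case $n=1$; the converse follows from $\overline{J^{\,n}}=\overline{(\overline J)^{\,n}}=\overline{(\overline{J_0})^{\,n}}=\overline{J_0^{\,n}}$). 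Since $M\overline\nu_{J_0}=\nu_\Gamma$, we get $M\overline\nu_J=\nu_\Gamma$ iff $\overline\nu_J=\overline\nu_{J_0}$ iff $\overline J=\overline{J_0}$; and as $J\subseteq J_0$ always gives $\overline J\subseteq\overline{J_0}$, the whole content is the reverse inclusion $J_0\subseteq\overline J$.

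It remains to identify $\overline J=\overline{J_0}$ with Newton non-degeneracy of $J$, and this is the genuinely hard point, furnished precisely by the characterization of non-degeneracy in Theorem \ref{caractdelta}. For the easy direction, suppose $J$ is degenerate: there is a compact face $\Delta=\Delta(v,\Gamma_+)$ and a point $a\in(\C\smallsetminus\{0\})^n$ with $(g_j)_\Delta(a)=0$ for all $j$. Along the arc $\varphi(t)=(a_1t^{v_1},\dots,a_nt^{v_n})$ one has $\ord_t\big(g_j\circ\varphi\big)>\ell(v,\Gamma_+)$ for every $j$, whereas for a lattice vertex $k$ of $\Delta$ we get $\ord_t\big(x^k\circ\varphi\big)=\langle v,k\rangle=\ell(v,\Gamma_+)$ with nonzero leading coefficient $a^k$; the curve criterion then shows $x^k\notin\overline J$ although $x^k\in J_0$, so $\overline J\subsetneq\overline{J_0}$. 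The converse, $J_0\subseteq\overline J$ under non-degeneracy, is where the real work lies: using the valuative (curve) criterion one must show that for each $k\in\Gamma_+$ and each arc the order of $x^k$ is dominated by $\min_j\ord_t(g_j\circ\varphi)$, the point being that non-degeneracy keeps the leading face form of some $g_j$ from vanishing. Reducing the valuative criterion to the finitely many monomial valuations $\ord_{v^i}$ and controlling arcs that approach the coordinate hyperplanes is the main obstacle, and is exactly the step where the non-degeneracy hypothesis is indispensable; everything preceding it is the formal dictionary between $\nu_\Gamma$, the monomial ideal $J_0$, and $\overline\nu_J$ set up above.
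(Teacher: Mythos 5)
Your formal scaffolding is correct, and it is worth recording that the paper itself contains \emph{no} proof of Proposition \ref{Impa}: the statement is imported verbatim from \cite[p.\ 26]{Bivia2005}, and the equivalence you reduce everything to --- $\overline J=\A_M$ if and only if $J$ is Newton non-degenerate, where $\A_M=\langle x^k:k\in\Gamma_+\cap\Z^n_+\rangle$ --- is precisely the content of that citation (essentially the theorem of Saia \cite{Saia1996}). Within that reduction, your steps check out: $J\subseteq\A_M$ since every generator has support in $\Gamma_+$; the identity $M\overline\nu_{\A_M}=\nu_\Gamma$ via the facet valuations $\ord_{v^i}$ is right (the non-compact facets impose no condition because $\Gamma_+$ is convenient, so $\ell(e_j,\Gamma_+)=0$); the dictionary $M\overline\nu_J=\nu_\Gamma\Leftrightarrow\overline J=\overline{\A_M}=\A_M$ is sound in $\O_n$, using $\overline{I^p}=\{h:\overline\nu_J(h)\geq p\}$, homogeneity $\overline\nu_I(h^q)=q\,\overline\nu_I(h)$, and the fact that $\A_M$ is integrally closed; and your curve-criterion argument that degeneracy on a compact face $\Delta(v,\Gamma_+)$ (with $v$ normalized in $\Z^n_{\geq 1}$) produces a lattice vertex $k$ of $\Delta$ with $x^k\in\A_M\smallsetminus\overline J$ correctly proves that equality forces non-degeneracy.

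The genuine gap is the implication you flag and then do not prove: non-degeneracy implies $\A_M\subseteq\overline J$. This is not peripheral bookkeeping; it is the one substantive assertion of the proposition, and your proposal ends by describing the obstacle rather than overcoming it. To indicate what is missing: for an arc $\varphi$ with all components nonzero the valuative criterion does close quickly --- set $v=(\ord_t\varphi_1,\dots,\ord_t\varphi_n)\in\Z^n_{\geq 1}$, let $a\in(\C\smallsetminus\{0\})^n$ be the vector of leading coefficients, note that the coefficient of $t^{\ell(v,\Gamma_+)}$ in $g_j\circ\varphi$ is $(g_j)_{\Delta(v,\Gamma_+)}(a)$, and use non-degeneracy to find $j$ with $(g_j)_{\Delta}(a)\neq 0$, whence $\min_j\ord_t(g_j\circ\varphi)=\ell(v,\Gamma_+)\leq\langle v,k\rangle=\ord_t(x^k\circ\varphi)$ for every $k\in\Gamma_+\cap\Z^n_+$. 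The case you correctly identify as the crux is $\varphi_j\equiv 0$ for $j$ in a nonempty set $S$: there is then no order vector $v$, and one must pass to $v(N)$ with entries $N\gg 0$ in the $S$-slots; convenience of $\Gamma_+$ (finite colength of $J$) guarantees $\Gamma_+\cap\{k_j=0,\ j\in S\}\neq\emptyset$, so for $N$ large $\Delta(v(N),\Gamma_+)$ is a compact face contained in $\{k_j=0,\ j\in S\}$, its initial forms involve no variable $x_j$ with $j\in S$, and evaluating at the torus point whose $S$-coordinates are set to $1$ lets non-degeneracy apply again, while monomials $x^k$ with $k_j>0$ for some $j\in S$ vanish identically along $\varphi$ and are harmless. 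Since you stop exactly before this case analysis, your proposal proves the inequality $M\overline\nu_J\leq\nu_\Gamma$ and the implication ``equality $\Rightarrow$ non-degenerate,'' but for the converse it only reduces the proposition to the theorem being cited; as a self-contained proof it is incomplete at its single essential point.
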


As a consequence of the previous result, if $J$ is an ideal of finite colength of $\O_n$ and
$r_i=\min\{r:re_i\in\Gamma_+(J)\}$, for all $i=1,\dots, n$,
then $\max\{r_1,\dots, r_n\}\leq \LL_0(J)$ and equality holds if $J$ is a Newton non-degenerate ideal (see \cite[p.\ 27]{Bivia2005} for details).

Given an integer $r\in\Z_{\geq 0}$, we denote by $\mathscr A_r$ the ideal of $\O_n$ generated by the elements
$h\in\O_n$ such that $\nu_\Gamma(h)=r$ (we assume that the ideal generated by the empty set is $0$).
In particular, $\A_{M_\Gamma}=\langle x^k:k\in\Gamma_+\rangle$.

Moreover, we denote by $\B_r$ the ideal of $\O_n$ generated by the elements $h\in\O_n$ for which $\nu_\Gamma(h)\geq r$.
Then
$$
\mathscr B_r=\big\{h\in\O_n: \phi_\Gamma(\supp(h))\subseteq [r,+\infty[\,\big\}\cup\{0\},
$$
for all $r\geq 0$ and $\nu_\Gamma(h)=\max\{r\geq 0: h\in \mathscr B_r\}$, for all $h\in\O_n$, $h\neq 0$.
We will refer indistinctly to the map $\nu_\Gamma$ and to the family of ideals $\{\B_r\}_{r\geq 1}$
as the {\it Newton filtration induced by $\Gamma_+$}.

It is immediate to check that
\begin{enumerate}
\item[(a)] $\B_r$ is an integrally closed monomial ideal of finite colength, for all $r\geq 1$;
\item[(b)] $\B_r\B_s\subseteq \B_{r+s}$, for all $r,s\geq 1$;
\item[(c)] $\B_0=\O_n$.
\end{enumerate}

If $I$ is an ideal of $\O_n$, then we denote by $\nu_\Gamma(I)$ the maximum of those $r$ such that $I\subseteq \B_r$. Then,
if $g_1,\dots, g_s$ denotes any generating system of $I$, we have
$$
\nu_\Gamma(I)=\min\{\nu_\Gamma(g_1),\dots, \nu_\Gamma(g_s)\}.
$$

Given an integer $r\geq 0$, we observe that $\A_r\subseteq \B_r$ and $\overline {\A_r}\neq
\B_r$ in general. Moreover it follows easily that $\overline
{\A_r}=\B_r$ if and only if $\A_r$ is an ideal of finite colength of
$\O_n$.

Let us remark that $\supp(\A_{M_{\Gamma}})=\Gamma_+\cap\Z^n_+$, $\A_{M_{\Gamma}}$ has
finite colength and
$e(\A_{M_{\Gamma}})=n!\V_{n}(\Gamma_{-})$, since $\Gamma$ is convenient and $\A_{M_{\Gamma}}$ is a monomial ideal (see
the paragraph before Definition \ref{S(I)}).

\begin{prop}\label{BezoutlikeNewton}
Let us fix a family of ideals $J_1,\dots, J_n$ of $\O_n$ such that $\sigma(J_1,\dots, J_n)<\infty$. Let
$\nu_\Gamma(J_i)=r_i$, for all $i=1,\dots, n$, and let $M=M_\Gamma$. Then
\begin{equation}\label{nondegongamma}
\sigma(J_1,\dots, J_n)\geq\frac{r_1\cdots r_n}{M^n}n!\V_n(\Gamma_-).
\end{equation}
\end{prop}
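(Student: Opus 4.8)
The plan is to bound $\sigma(J_1,\dots,J_n)$ from below by a mixed multiplicity of the monomial filtration ideals $\B_{r_i}$, and then to reduce that genuinely mixed quantity to the \emph{single} multiplicity $e(\A_M)=n!\V_n(\Gamma_-)$ by exploiting the multiadditivity of mixed multiplicities together with the homogeneity of the filtrating map $\phi_\Gamma$. In particular this route avoids any appeal to mixed volumes of the (non-convex) complementary regions.

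First I would pass from the $J_i$ to the filtration ideals. By the very definition of $\nu_\Gamma$, the equality $\nu_\Gamma(J_i)=r_i$ means $J_i\subseteq\B_{r_i}$ for all $i$. Since each $\B_{r_i}$ has finite colength by property (a), applying Lemma \ref{reverseincl} to the inclusions $J_i\subseteq\B_{r_i}$ (using the hypothesis $\sigma(J_1,\dots,J_n)<\infty$) gives
$$\sigma(J_1,\dots,J_n)\geq\sigma(\B_{r_1},\dots,\B_{r_n})=e(\B_{r_1},\dots,\B_{r_n}),$$
the last equality holding because all the $\B_{r_i}$ are $m$-primary (the remark after Definition \ref{lasigma}). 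Thus it suffices to prove $e(\B_{r_1},\dots,\B_{r_n})\geq\frac{r_1\cdots r_n}{M^n}\,n!\,\V_n(\Gamma_-)$.

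The core of the argument is to raise each ideal to the power $M$. By the multiadditivity of mixed multiplicities of $m$-primary ideals (\cite[\S 17]{HunekeSwanson2006}) one has $e(\B_{r_1}^M,\dots,\B_{r_n}^M)=M^n\,e(\B_{r_1},\dots,\B_{r_n})$. On the other hand property (b) gives $\B_{r_i}^M\subseteq\B_{r_iM}$, so Lemma \ref{reverseincl} (applied to these $m$-primary ideals) yields $e(\B_{r_1}^M,\dots,\B_{r_n}^M)\geq e(\B_{r_1M},\dots,\B_{r_nM})$. It remains to evaluate the right-hand side, and this is where the filtration enters. Since $\phi_\Gamma$ is positively homogeneous of degree one and $\Gamma_+=\{k\in\R^n_+:\phi_\Gamma(k)\geq M\}$, every monomial generator $x^k$ of $\B_{r_iM}$ satisfies $\phi_\Gamma(k/r_i)\geq M$, i.e. $k\in r_i\Gamma_+=\Gamma_+(\A_M^{r_i})$; together with the reverse inclusion $\A_M^{r_i}\subseteq\B_{r_iM}$ (which follows from $\langle v^j,k'\rangle\geq\ell(v^j,\Gamma_+)$ for every $k'\in\Gamma_+$ and every $v^j\in\mathscr F_c(\Gamma_+)$) and the fact that $\B_{r_iM}$ is integrally closed, this gives $\B_{r_iM}=\overline{\A_M^{r_i}}$. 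As mixed multiplicities depend only on integral closures, and using multiadditivity once more,
$$e(\B_{r_1M},\dots,\B_{r_nM})=e(\A_M^{r_1},\dots,\A_M^{r_n})=r_1\cdots r_n\,e(\A_M)=r_1\cdots r_n\,n!\,\V_n(\Gamma_-),$$
where the last equality is the volume formula recorded just before the statement. Chaining the three displayed relations produces $M^n\,e(\B_{r_1},\dots,\B_{r_n})\geq r_1\cdots r_n\,n!\,\V_n(\Gamma_-)$, which is the inequality (\ref{nondegongamma}).

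The main obstacle is the middle identification $\B_{r_iM}=\overline{\A_M^{r_i}}$: one must verify both inclusions, controlling $\Gamma_+(\B_{r_iM})$ from above through the homogeneity of $\phi_\Gamma$ (giving $\Gamma_+(\B_{r_iM})\subseteq r_i\Gamma_+$) and from below through $\A_M^{r_i}\subseteq\B_{r_iM}$, and then invoke that an integrally closed monomial ideal is determined by its Newton polyhedron. Everything else is bookkeeping with two standard properties of mixed multiplicities of $m$-primary ideals — multiadditivity and invariance under integral closure — and with the elementary inclusions (a) and (b); the convenience of $\Gamma_+$ and the volume formula $e(\A_M)=n!\,\V_n(\Gamma_-)$ are used only in the final evaluation.
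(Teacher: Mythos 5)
Your proof is correct, but it takes a genuinely different route from the paper's. The paper's own argument is a two-line reduction: by Proposition \ref{sigmaexists}, $\sigma(J_1,\dots,J_n)=e(g_1,\dots,g_n)$ for a sufficiently general element $(g_1,\dots,g_n)\in J_1\oplus\cdots\oplus J_n$, and since $\nu_\Gamma(g_i)\geq r_i$ the inequality is then exactly the Bézout-type bound of \cite[Theorem 3.3]{BFS} applied to the map $(g_1,\dots,g_n)$. You avoid \cite{BFS} entirely: after the monotonicity step $\sigma(J_1,\dots,J_n)\geq e(\B_{r_1},\dots,\B_{r_n})$ via Lemma \ref{reverseincl} (a step the paper itself performs, separately, at the start of the proof of Theorem \ref{main}), you homogenize by raising to the $M$-th power and reduce everything to $e(\A_M)$. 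Each ingredient checks out: the key identification $\B_{r_iM}=\overline{\A_M^{r_i}}$ holds because for $k\in\Z^n_+$ one has $x^k\in\B_{r_iM}\iff\phi_\Gamma(k)\geq r_iM\iff k\in r_i\Gamma_+$ (degree-one homogeneity of $\phi_\Gamma$ and the description $\Gamma_+=\{k\in\R^n_+:\phi_\Gamma(k)\geq M\}$, which uses convenience to dispose of the non-compact facets), while $r_i\Gamma_+=\Gamma_+(\A_M^{r_i})$ since $\Gamma_+(\A_M)=\Gamma_+$ and Minkowski sums of the convex set $\Gamma_+$ with itself are dilations; multiadditivity and integral-closure invariance of mixed multiplicities are standard for $m$-primary ideals. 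What each approach buys: the paper's proof is shorter but imports the full strength of \cite[Theorem 3.3]{BFS} (proved there via Newton graded algebras), and that theorem moreover characterizes when equality holds, which the paper needs immediately afterwards for Definition \ref{idealsnodeg} and Proposition \ref{revisio} --- your method yields the inequality only. In exchange, yours is self-contained modulo standard monomial-ideal facts and establishes the cleaner intermediate bound $e(\B_{r_1},\dots,\B_{r_n})\geq\frac{r_1\cdots r_n}{M^n}\,e(\A_M)$ for the filtration ideals themselves. One small housekeeping point: your argument needs $r_i\geq 1$ so that $\B_{r_i}$ has finite colength; this is automatic when the $J_i$ are proper (convenience of $\Gamma_+$ gives $m\subseteq\B_1$), and if some $J_i=\O_n$ then $r_i=0$, the right-hand side of (\ref{nondegongamma}) vanishes, and the statement is trivial, so nothing is lost.
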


\begin{proof} By Proposition \ref{sigmaexists} we have that
$\sigma(J_1,\dots, J_n)=e(g_1,\dots, g_n)$, for a sufficiently general element $(g_1,\dots, g_n)\in J_1\oplus\cdots\oplus J_n$. Then
the result arises as a direct application of \cite[Theorem 3.3]{BFS}.
\end{proof}

As a consequence of \cite[Theorem 3.3]{BFS}, equality in (\ref{nondegongamma}) is characterized by means of a condition imposed to
any element $(g_1,\dots, g_n)\in J_1\oplus \cdots \oplus J_n$ such that $e(g_1,\dots, g_n)=\sigma(J_1,\dots, J_n)$ (we refer the reader to \cite{BFS} for details). By coherence with the nomenclature of \cite[Theorem 3.3]{BFS} we introduce the following definition.

\begin{defn}\label{idealsnodeg}
Let $J_1,\dots, J_n$ be a family of ideals of $\O_n$ such that $\sigma(J_1,\dots, J_n)<\infty$. Let $M=M_\Gamma$. We say that $(J_1,\dots, J_n)$ is {\it non-degenerate on $\Gamma_+$}, or that $(J_1,\dots, J_n)$ is {\it $\Gamma_+$-non-degenerate}, when equality holds in (\ref{nondegongamma}). That is, when $\sigma(J_1,\dots, J_n)=\frac{r_1\cdots r_n}{M^n}e(\A_M)$.
\end{defn}

Under the hypothesis of the previous definition, let us suppose that $J_1$ is principal, that is, $J_1=\langle h\rangle$, for some $h\in\O_n$.
Then, in order to simplify the notation, we will write $(h,J_2,\dots, J_n)$ instead of $(\langle h\rangle, J_2,\dots, J_n)$. We will adopt the same simplification if any other ideal $J_i$ is principal, for some $i\in\{1,\dots, n\}$. Hence, the previous definition applies to
germs of complex analytic maps $(g_1,\dots, g_n):(\C^n,0)\to (\C^n,0)$ such that $g^{-1}(0)=\{0\}$.

If $r_1,\dots, r_n\in\Z_{\geq 1}$, then it is not true in general
that $\sigma(\A_{r_1},\dots, \A_{r_n})<\infty$, even if $\A_{r_i}\neq
0$, for all $i=1,\dots, n$. However $\sigma(\B_{r_1},\dots,
\B_{r_n})<\infty$, since $\B_{r_i}$ has finite colength, for all
$i=1,\dots, n$. If $\sigma(\A_{r_1},\dots, \A_{r_n})<\infty$, then it is also not true
in general that $(\A_{r_1},\dots, \A_{r_n})$ is non-degenerate on $\Gamma_+$,
as the next example shows. If $\Gamma_+$ has only one compact face of dimension $n-1$ (that is,
if the Newton filtration induced by $\Gamma_+$ is a weighted homogeneous filtration), then the condition
$\sigma(\A_{r_1},\dots, \A_{r_n})<\infty$ implies that $(\A_{r_1},\dots, \A_{r_n})$ is non-degenerate on $\Gamma_+$ (see
\cite[Proposition 4.2]{BiviaEncinas2011} for details).

\begin{ex} Let $J=\langle x^4, xy, y^4\rangle$ and let $\Gamma_+=\Gamma_+(J)$. We observe that $M_\Gamma=4$ and the map $\phi_\Gamma:\R^2_+\to \R$ is given by $\phi_\Gamma(k_1,k_2)=\min\{k_1+3k_2, 3k_1+k_2\}$, for all $(k_1,k_2)\in\R^2_+$.
Hence we have that $\A_5=\langle x^5, x^2y, xy^2, y^5\rangle$ and
$$
\sigma(\A_5,\A_5)=e(\A_5)=13\neq \frac{5\cdot 5}{4^2}e(J)=\frac{200}{16}=\frac{25}{2}.
$$
\end{ex}

For the sake of completeness, we show in Proposition \ref{revisio} a reformulation of \cite[Theorem 3.3]{BFS} considering the notion of Rees mixed multiplicity.
If $\Delta$ is a compact face of $\Gamma_+$, then we denote by $\RR_\Delta$ the subring of $\O_n$ formed by all germs $h\in\O_n$ such that
$\supp(h)\subseteq C(\Delta)$. If $\alpha>0$, then $\alpha\Delta$ will denote the set $\{\alpha k:k\in \Delta\}$.
Given a function germ $h\in\O_n$, if $h=\sum_ka_kx^k$ is the Taylor expansion of $h$ around the origin, then we denote by
$p_\Delta(h)$ the sum of all terms $a_kx^k$ for which $\nu_\Gamma(x^k)=\nu_\Gamma(h)$ and $k\in C(\Delta)$. If
no such terms exist, then we set $p_\Delta(h)=0$.
We recall that $h_\Delta$ denotes the sum of all terms $a_kx^k$ such that $k\in \Delta$.
Hence, if $d=\nu_\Gamma(h)$, we observe that
\begin{equation}\label{initial}
p_\Delta(h)=h_{\frac{d}{M}\Delta}.
\end{equation}

\begin{prop}\label{revisio} Let $g=(g_1,\dots, g_n):(\C^n,0)\to (\C^n,0)$ be a complex analytic map. Then the following conditions are equivalent:
\begin{enumerate}
\item $g$ is non-degenerate on $\Gamma_+$;
\item for each compact facet $\Delta$ of $\Gamma_+$, the ideal of $\RR_\Delta$ generated by $p_\Delta(g_1),\dots, p_\Delta(g_n)$ has finite colength in $\RR_\Delta$.
\end{enumerate}
\end{prop}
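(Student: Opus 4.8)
The plan is to derive the equivalence directly from \cite[Theorem 3.3]{BFS}, after rewriting the non-degeneracy condition of Definition \ref{idealsnodeg} for the principal ideals attached to $g$. First I would reduce (1) to a numerical equality. Since each $\langle g_i\rangle$ is principal, a sufficiently general element of $\langle g_1\rangle\oplus\cdots\oplus\langle g_n\rangle$ has the form $(u_1g_1,\dots, u_ng_n)$ with $u_i\in\C^*$, whence $\langle u_ig_i\rangle=\langle g_i\rangle$ and $e(u_1g_1,\dots,u_ng_n)=e(g_1,\dots,g_n)$; thus Proposition \ref{sigmaexists} gives $\sigma(\langle g_1\rangle,\dots,\langle g_n\rangle)=e(g_1,\dots,g_n)$ as soon as this number is finite, the finiteness being equivalent to $g^{-1}(0)=\{0\}$. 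Moreover $\nu_\Gamma(\langle g_i\rangle)=\nu_\Gamma(g_i)=:d_i$. Writing $M=M_\Gamma$ and recalling that $\A_M$ has finite colength with $e(\A_M)=n!\,\V_n(\Gamma_-)$ (here convenience of $\Gamma_+$ is used, so that $\Gamma_-$ is the closure of $\R^n_+\smallsetminus\Gamma_+$), Definition \ref{idealsnodeg} says that (1) holds if and only if
\[
e(g_1,\dots,g_n)=\frac{d_1\cdots d_n}{M^n}\,n!\,\V_n(\Gamma_-).
\]

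Next I would match the two descriptions of the leading forms. For a compact facet $\Delta$ of $\Gamma_+$, identity (\ref{initial}) shows that $p_\Delta(g_i)=(g_i)_{\frac{d_i}{M}\Delta}$, i.e. $p_\Delta(g_i)$ is the weighted-homogeneous initial part of $g_i$ supported on $C(\Delta)$ relative to the primitive vector $v\in\mathscr F_c(\Gamma_+)$ supporting $\Delta$; by construction these forms lie in $\RR_\Delta$. These are precisely the forms entering the non-degeneracy hypothesis of \cite[Theorem 3.3]{BFS}, which establishes the lower bound (\ref{nondegongamma}) together with the statement that equality holds exactly when, for every compact facet $\Delta$, the elements $p_\Delta(g_1),\dots,p_\Delta(g_n)$ generate an ideal of finite colength in $\RR_\Delta$. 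Combining this characterization with the reformulation of (1) obtained in the first step yields (1)$\Leftrightarrow$(2).

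The step requiring care is the compatibility of the finiteness hypotheses and the exact transcription of the facet condition. One must first secure $\sigma(\langle g_1\rangle,\dots,\langle g_n\rangle)<\infty$ so that Definition \ref{idealsnodeg} is meaningful, and then check that the facet-wise condition of \cite[Theorem 3.3]{BFS} is carried over verbatim into the subring $\RR_\Delta$ with the correct leading forms, which is what identity (\ref{initial}) guarantees. The genuinely substantive input—the equivalence between the numerical equality above and the facet-wise finite-colength condition—is supplied by \cite[Theorem 3.3]{BFS}; should that reference phrase the criterion instead as the absence of common zeros of the $p_\Delta(g_i)$ off the coordinate hyperplanes, the only extra ingredient is the standard identification of that torus non-vanishing with finite colength in $\RR_\Delta$, via the toric structure of $\operatorname{Spec}\RR_\Delta$. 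The remainder is bookkeeping translating \cite[Theorem 3.3]{BFS} into the language of Rees mixed multiplicities and of the forms $p_\Delta(g_i)$.
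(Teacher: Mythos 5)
Your proof is correct and takes essentially the same route as the paper: unwind Definition \ref{idealsnodeg} for the principal ideals $\langle g_1\rangle,\dots,\langle g_n\rangle$ into the numerical equality $e(g_1,\dots,g_n)=\frac{r_1\cdots r_n}{M^n}\,n!\,\V_n(\Gamma_-)$ and then invoke \cite[Theorem 3.3]{BFS}, which is precisely the paper's two-line argument. The additional bookkeeping you supply (sufficiently general elements of principal ideals via Proposition \ref{sigmaexists}, and the identification $p_\Delta(g_i)=(g_i)_{\frac{r_i}{M}\Delta}$ from relation (\ref{initial})) is sound and merely makes explicit what the paper leaves implicit.
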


\begin{proof} Condition (1) means that $e(g_1,\dots, g_n)=\frac{r_1\cdots r_n}{M^n}n!\V_n(\Gamma_-)$, where $r_i=\nu_\Gamma(g_i)$, for all $i=1,\dots, n$. Therefore the result is an immediate consequence of \cite[Theorem 3.3]{BFS}.
\end{proof}

We remark that the equivalence of the previous result is considered as the definition of non-degeneracy on $\Gamma_+$ given in
\cite{BFS}. We show a result (Corollary \ref{caractnodeg}) that helps in the task of testing condition (2) of the previous proposition.
First we recall a result of Kouchnirenko \cite{K} that is stated in the context of Laurent series but that we will state here for germs of $\O_n$.

\begin{thm}\label{caractdelta}\cite[Théorème 6.2]{K} Let $\Delta$ be a compact face of $\Gamma_+$ and let $g_1,\dots, g_s\in \O_n$
such that $\supp(g_i)\subseteq \Gamma_+$, for all $i=1,\dots, s$. Then the following conditions are equivalent:
\begin{enumerate}
\item the ideal of $\RR_\Delta$ generated by ${(g_1)}_\Delta,\dots, {(g_s)}_\Delta$ has finite colength in $\RR_\Delta$;
\item for all compact faces $\Delta'\subseteq \Delta$, the set germ at $0$ of common zeros of ${(g_1)}_{\Delta'},\dots, {(g_s)}_{\Delta'}$
is contained in $\{x\in\C^n: x_1\cdots x_n=0\}$.
\end{enumerate}
\end{thm}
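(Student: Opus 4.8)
The plan is to identify condition (1) with a statement about the common zero locus of the germs $(g_1)_\Delta,\dots,(g_s)_\Delta$ on the analytic germ $X_\Delta=\operatorname{Specan}\RR_\Delta$ (the analytic spectrum of $\RR_\Delta$), and then to read off that locus orbit by orbit, each torus orbit contributing exactly the vanishing condition attached to a single face $\Delta'\subseteq\Delta$.

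First I would record the local structure of $\RR_\Delta$. Since $C(\Delta)\cap\Z^n$ is a finitely generated monoid, $\RR_\Delta$ is a Noetherian analytic $\C$-algebra (a quotient of some $\O_m$), so the R\"uckert (analytic) Nullstellensatz applies: an ideal has finite colength in $\RR_\Delta$ if and only if the germ of its zero set in $X_\Delta$ is the closed (torus-fixed) point. Next, choosing a strictly positive $v\in\R^n_+$ with $\Delta=\Delta(v,\Gamma_+)$ (possible because $\Delta$ is compact), every $(g_i)_\Delta$ is $v$-weighted homogeneous of degree $\ell(v,\Gamma_+)$, so the ideal $\mathfrak a=\langle (g_1)_\Delta,\dots,(g_s)_\Delta\rangle$ is graded and its zero set $V(\mathfrak a)$ is invariant under the contracting $\C^*$-action $t\cdot x=(t^{v_1}x_1,\dots,t^{v_n}x_n)$. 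Consequently $V(\mathfrak a)$ is a cone, the germ at the fixed point equals $V(\mathfrak a)$ itself, and condition (1) is equivalent to $V(\mathfrak a)=\{0\}$.

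The heart of the argument is then the orbit decomposition $X_\Delta\smallsetminus\{0\}=\bigsqcup_{\Delta'} O_{\Delta'}$, where $\Delta'$ runs over the nonempty faces of $\Delta$. Concretely I would describe a point of $X_\Delta$ as a multiplicative functional $p$ on $C(\Delta)\cap\Z^n$; its support is a monoid face $C(\Delta')\cap\Z^n$, and on that face $p$ is a character, hence representable by some $z\in(\C^*)^n$ via $p(x^k)=z^k$. Using the convexity fact $\Delta\cap C(\Delta')=\Delta'$, for any $h$ with $\supp(h)\subseteq\Delta$ one gets $p(h)=\sum_{k\in\Delta'}a_kz^k=h_{\Delta'}(z)$, so that $p((g_i)_\Delta)=(g_i)_{\Delta'}(z)$. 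Therefore
\[
V(\mathfrak a)\cap O_{\Delta'}\neq\emptyset \iff \exists\,z\in(\C^*)^n:\ (g_1)_{\Delta'}(z)=\cdots=(g_s)_{\Delta'}(z)=0,
\]
which is precisely the failure of condition (2) at the face $\Delta'$. Since the orbits $O_{\Delta'}$ exhaust $X_\Delta\smallsetminus\{0\}$, we conclude $V(\mathfrak a)=\{0\}$ if and only if condition (2) holds for every compact face $\Delta'\subseteq\Delta$; combined with the previous paragraph this yields the equivalence of (1) and (2).

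The main obstacle I anticipate is making the orbit--face dictionary rigorous: describing the points of $X_\Delta$ as multiplicative functionals, identifying the support of such a functional with a face $\Delta'$, and passing from a character of the (possibly non-saturated) monoid $C(\Delta')\cap\Z^n$ to an honest $z\in(\C^*)^n$ realizing $p((g_i)_\Delta)=(g_i)_{\Delta'}(z)$. One must also verify that the fixed point lies in the closure of every orbit, so that the germ of $V(\mathfrak a)$ at the fixed point already detects every $O_{\Delta'}$; here the weighted homogeneity of the $(g_i)_\Delta$ is exactly what guarantees this. Non-normality of $X_\Delta$ caused by non-saturation of the monoid is harmless, since it affects neither finite colength nor the stratification by faces, and if desired one may pass to the normalization. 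An alternative route avoiding toric language is an induction on $\dim\Delta$, treating the proper faces of $\Delta$ by the inductive hypothesis and the open face $\Delta'=\Delta$ by a direct Nullstellensatz argument on the big torus; I would nevertheless prefer the toric formulation above, since it handles all faces uniformly.
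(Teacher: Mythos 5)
The paper contains no proof of this statement to compare yours against: Theorem \ref{caractdelta} is imported verbatim from Kouchnirenko \cite[Th\'eor\`eme 6.2]{K} (transposed from Laurent series to $\O_n$), so your proposal must be judged on its own merits. On those merits it is correct, and it is essentially the standard toric argument underlying Kouchnirenko's theorem: identify $\RR_\Delta$ with the analytic local ring at the torus-fixed point of the affine toric variety $X_\Delta$ of the monoid $C(\Delta)\cap\Z^n$; use the R\"uckert Nullstellensatz together with the contracting $\C^*$-action coming from a strictly positive integral vector $v$ supporting $\Delta$ (which makes $\mathfrak a=\langle (g_1)_\Delta,\dots,(g_s)_\Delta\rangle$ graded) to replace the germ condition by the global condition $V(\mathfrak a)=\{0\}$; then stratify $X_\Delta\smallsetminus\{0\}$ by the orbits $O_{\Delta'}$ indexed by the nonempty faces $\Delta'$ of $\Delta$ (which are exactly the compact faces of $\Gamma_+$ contained in $\Delta$), and use $\Delta\cap C(\Delta')=\Delta'$ to see that $V(\mathfrak a)$ meets $O_{\Delta'}$ precisely when $(g_1)_{\Delta'},\dots,(g_s)_{\Delta'}$ have a common zero in $(\C^*)^n$. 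All the steps you flag as obstacles are genuine but routine, and one of them dissolves entirely: the monoid $C(\Delta')\cap\Z^n$ is automatically saturated, being the intersection of a rational cone with the full lattice $\Z^n$, and the group it generates is $\operatorname{span}_\R C(\Delta')\cap\Z^n$, a saturated sublattice and hence a direct summand of $\Z^n$; so every character of it extends to a point $z\in(\C^*)^n$, exactly as your orbit--face dictionary requires, and $X_\Delta$ is in fact normal.

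Two points you should make explicit to close the argument. First, condition (2) is phrased as a containment of set germs at $0\in\C^n$, while your orbit criterion produces a common zero $z\in(\C^*)^n$; the equivalence of the two is the same contraction trick you use on $X_\Delta$: choosing $v'\in\Z^n_{\geq 1}$ supporting $\Delta'$ as a face of $\Gamma_+$, each $(g_i)_{\Delta'}$ is $v'$-weighted homogeneous, so the arc $t\cdot z=(t^{v'_1}z_1,\dots,t^{v'_n}z_n)$ consists of common zeros off $\{x_1\cdots x_n=0\}$ accumulating at the origin, and conversely any such germ point yields a torus zero. Second, the identification of $\RR_\Delta$ with the analytic local ring of $X_\Delta$ at its fixed point needs the surjectivity of the map $\O_m\to\RR_\Delta$ sending coordinates to the monomials $x^{k_1},\dots,x^{k_m}$ generating the monoid; this rewriting of a convergent series supported in $C(\Delta)$ as a convergent series in the $x^{k_j}$ is where convergence actually enters and deserves either a short argument or a citation (it is established in Kouchnirenko's paper, where $\RR_\Delta$-type rings are studied, and used again in \cite{BFS}). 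With these two sentences added, your proof is complete and is, in substance, the proof behind the cited result.
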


\begin{cor}\label{caractnodeg}
Under the hypothesis of the previous theorem, the following conditions are equivalent:
\begin{enumerate}
\item the ideal of $\RR_\Delta$ generated by $p_\Delta(g_1),\dots, p_\Delta(g_s)$ has finite colength in $\RR_\Delta$;
\item for all compact faces $\Delta'\subseteq \Delta$, the set germ at $0$ of common zeros of $p_{\Delta'}(g_1),\dots, p_{\Delta'}(g_s)$
is contained in $\{x\in\C^n: x_1\cdots x_n=0\}$.
\end{enumerate}
\end{cor}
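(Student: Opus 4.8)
The plan is to deduce the Corollary from Theorem \ref{caractdelta} by homogenizing the given germs so that all of them acquire the same Newton filtration value. Write $d_i=\nu_\Gamma(g_i)$ for each $i$ with $g_i\neq 0$ (the case $g_i=0$ contributes a zero generator and is harmless), so that $d_i\geq M$ and, by (\ref{initial}) applied to each compact subface, $p_{\Delta'}(g_i)=(g_i)_{\frac{d_i}{M}\Delta'}$ for every compact face $\Delta'\subseteq\Delta$. Put $N=\mathrm{lcm}(d_1,\dots,d_s)$, $D=NM$ and $m_i=D/d_i\in\Z_{\geq 1}$. The point of these choices is that the germs $g_i^{m_i}$ all have $\nu_\Gamma$-value $D$, so their lowest-value parts inside $C(\Delta)$ all sit on one and the same dilated face $\frac{D}{M}\Delta=N\Delta$, which is a compact face of the integral dilation $\Gamma'_+=N\Gamma_+$.

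The first key step I would establish is that taking the part of lowest $\nu_\Gamma$-value inside a cone is multiplicative, that is $p_{\Delta'}(g^m)=p_{\Delta'}(g)^m$ for all $m\geq 1$. This rests on the fact that $\phi_\Gamma$, being a minimum of finitely many linear forms, is concave and positively homogeneous, hence superadditive. Indeed, if $a+b\in C(\Delta')$ and $v$ is a primitive vector supporting a facet containing $\Delta'$, then $\phi_\Gamma(a)+\phi_\Gamma(b)\leq\frac{M}{\ell(v,\Gamma_+)}\langle v,a+b\rangle=\phi_\Gamma(a+b)\leq\phi_\Gamma(a)+\phi_\Gamma(b)$, so equality holds throughout and both $a,b$ already lie in $C(\Delta')$. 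Consequently a minimal-value exponent of $g^m$ lying in $C(\Delta')$ splits as a sum of minimal-value exponents of $g$ lying in $C(\Delta')$, which gives the claimed identity. In particular $p_\Delta(g_i^{m_i})=p_\Delta(g_i)^{m_i}$, and likewise $p_{\Delta'}(g_i^{m_i})=p_{\Delta'}(g_i)^{m_i}$.

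Next I would apply Theorem \ref{caractdelta} to the convenient Newton polyhedron $\Gamma'_+=N\Gamma_+$, to its compact face $N\Delta$, and to the germs $P_i:=p_\Delta(g_i)^{m_i}$, whose supports are contained in $N\Delta\subseteq\Gamma'_+$. Since the cone over $N\Delta$ coincides with $C(\Delta)$ we have $\RR_{N\Delta}=\RR_\Delta$, and $(P_i)_{N\Delta}=P_i$. Thus the theorem's condition (1) reads: the ideal of $\RR_\Delta$ generated by $p_\Delta(g_1)^{m_1},\dots,p_\Delta(g_s)^{m_s}$ has finite colength; because finite colength in a Noetherian local ring is unchanged when generators are replaced by positive powers, this is equivalent to condition (1) of the Corollary. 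The compact subfaces of $N\Delta$ are exactly the $N\Delta'$ with $\Delta'\subseteq\Delta$ compact, and since $(P_i)_{N\Delta'}=p_{\Delta'}(g_i^{m_i})=p_{\Delta'}(g_i)^{m_i}$, the theorem's condition (2) becomes the requirement that for every compact $\Delta'\subseteq\Delta$ the common zeros of $p_{\Delta'}(g_1)^{m_1},\dots,p_{\Delta'}(g_s)^{m_s}$ lie in $\{x_1\cdots x_n=0\}$; as a power has the same zero set as its base, this is precisely condition (2) of the Corollary.

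The hard part will be the multiplicativity statement $p_{\Delta'}(g^m)=p_{\Delta'}(g)^m$, and more generally ensuring that passage to the dilated polyhedron $\Gamma'_+$ correctly matches $p_{\Delta'}(g_i^{m_i})$ with the face part $(g_i^{m_i})_{N\Delta'}$ required to invoke Theorem \ref{caractdelta}. The delicate point is exactly the splitting of a minimal-value exponent of $g^m$ in $C(\Delta')$ into minimal-value exponents of $g$ in $C(\Delta')$, which is handled by the concavity argument above. Once this is settled, together with the elementary observations that finite colength and zero sets are insensitive to replacing generators by positive powers, the equivalence of the two conditions follows at once from Theorem \ref{caractdelta}.
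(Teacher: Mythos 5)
Your architecture is exactly the paper's: raise the $g_i$ to powers that equalize the $\nu_\Gamma$-values, show that forming the face part commutes with these powers, invoke Theorem \ref{caractdelta} on the dilated face, and use the facts that finite colength and zero sets are insensitive to replacing generators by powers. The paper uses exponents $r/r_i$ with $r=r_1\cdots r_s$ where you use $m_i=NM/d_i$ with $N$ the least common multiple (your choice has the minor advantage that the dilation factor $N$ is an integer, so $N\Gamma_+$ is a genuine lattice Newton polyhedron); and the paper compresses the whole multiplicativity step into relation (\ref{pDelta}), declared ``straightforward to check,'' where you attempt an actual proof. It is precisely there that your argument has a concrete flaw.

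In your equality-case chain you take $v$ supporting a \emph{facet} containing $\Delta'$. Equality then yields $\phi_\Gamma(a)=\frac{M}{\ell(v,\Gamma_+)}\langle v,a\rangle$ and likewise for $b$, which places $a,b$ in $C(\Delta(v,\Gamma_+))$, the cone over the facet --- \emph{not} in $C(\Delta')$ as you conclude. For a proper compact face $\Delta'\subsetneq\Delta(v,\Gamma_+)$ this is strictly weaker, and cross terms of $p_\Delta(g_i)^{m_i}$ supported on $\frac{m_id_i}{M}\Delta(v,\Gamma_+)\smallsetminus\frac{m_id_i}{M}\Delta'$ are then not excluded; since condition (2) of the corollary quantifies over all compact $\Delta'\subseteq\Delta$, this is exactly the case you need. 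The repair is standard: since $\Delta'$ is compact, choose a strictly positive $u$ with $\Delta(u,\Gamma_+)=\Delta'$, check that $\phi_\Gamma\leq\frac{M}{\ell(u,\Gamma_+)}\langle u,\cdot\rangle$ holds on all of $\R^n_+$ (e.g.\ because $\{\phi_\Gamma\geq sM\}=s\Gamma_+$ for convenient $\Gamma_+$; this is not automatic, as $u$ need not be among the facet vectors defining $\phi_\Gamma$) with equality locus exactly $C(\Delta')$, and rerun the chain with $u$ in place of $v$. Two further small points: the last inequality in your chain, $\phi_\Gamma(a+b)\leq\phi_\Gamma(a)+\phi_\Gamma(b)$, is the \emph{reverse} of superadditivity and false in general --- it should come from pinning the value of $a+b$, most cleanly by restricting to exponents lying on the dilated face $\frac{m_id_i}{M}\Delta'$ (whose $\phi_\Gamma$-value is forced to be $m_id_i$ by membership), so that superadditivity closes the chain; and your identification $p_{\Delta'}(g_i^{m_i})=(g_i^{m_i})_{N\Delta'}$ tacitly assumes $\nu_\Gamma(g_i^{m_i})=m_id_i$ (no total cancellation of the bottom level), which the face-part formulation --- the one actually used in the paper's relation (\ref{pDelta}) --- avoids altogether. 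With these repairs your proof is correct and coincides in substance with the paper's.
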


\begin{proof} Let $r_i=\nu_\Gamma(g_i)$, for all $i=1,\dots, s$, and let $r=r_1\cdots r_s$. Let $I$ denote the ideal of
$\RR_\Delta$ generated by $p_\Delta(g_1),\dots, p_\Delta(g_s)$ and let $J$ denote the ideal of $\RR_\Delta$
generated by $\{p_\Delta(g_1)^{r/r_1},\dots, p_\Delta(g_s)^{r/r_s}\}$.
We observe that $I$ has finite colength in $\RR_\Delta$ if
and only if $J$ has finite colength in $\RR_\Delta$, since $I$ and $J$ have the same radical. It is straightforward to check (see relation (\ref{initial})) that
\begin{equation}\label{pDelta}
p_\Delta(g_i)^{r/r_i}=((g_i)_{\frac{r_i}{M}\Delta})^{r/r_i}=(g_i^{r/r_i})_{\frac{r}{M}\Delta},
\end{equation}
for all $i=1,\dots,s$. Then, by Theorem \ref{caractdelta}, the ideal $J$ has finite colength if and only if
the set germ at $0$ of common zeros of ${(g_1^{r/r_1})}_{A},\dots, {(g_s^{r/r_s})}_{A}$
is contained in $\{x\in\C^n: x_1\cdots x_n=0\}$, for any compact face $A\subseteq {\frac{r}{M}\Delta}$.
Given a subset $\Delta'\subseteq\R^n_+$, we observe that $\Delta'$ is a compact face of $\Delta$ if and only if $\frac{r}{M}\Delta'$ is a compact face of $\frac{r}{M}\Delta$.
Then the result follows immediately from relation (\ref{pDelta}) and Theorem \ref{caractdelta}.
\end{proof}

\begin{cor}\label{conseq} Let $g=(g_1,\dots, g_n):(\C^n,0)\to (\C^n,0)$ be a complex analytic map such that $g^{-1}(0)=\{0\}$.
Then $g$ is non-degenerate on $\Gamma_+$ if and only if the set germ at $0$ of common zeros of $p_{\Delta}(g_1),\dots, p_{\Delta}(g_n)$
is contained in $\{x\in\C^n: x_1\cdots x_n=0\}$, for all compact faces $\Delta$ of $\Gamma_+$.
\end{cor}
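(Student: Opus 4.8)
The plan is to derive the statement by chaining Proposition \ref{revisio} with Corollary \ref{caractnodeg}; the only genuinely new ingredient is a geometric fact about convenient Newton polyhedra. Since $g^{-1}(0)=\{0\}$, the ideal $\langle g_1,\dots,g_n\rangle$ has finite colength, so $\sigma(g_1,\dots,g_n)<\infty$ and the notion of non-degeneracy on $\Gamma_+$ applies to $g$. By Proposition \ref{revisio}, $g$ is non-degenerate on $\Gamma_+$ if and only if, for every compact facet $\Delta$ of $\Gamma_+$, the ideal of $\RR_\Delta$ generated by $p_\Delta(g_1),\dots,p_\Delta(g_n)$ has finite colength in $\RR_\Delta$. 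Applying Corollary \ref{caractnodeg} to each such facet $\Delta$ (with $s=n$), this finite-colength condition for $\Delta$ is equivalent to the requirement that, for every compact face $\Delta'\subseteq\Delta$, the set germ at $0$ of common zeros of $p_{\Delta'}(g_1),\dots,p_{\Delta'}(g_n)$ be contained in $\{x\in\C^n:x_1\cdots x_n=0\}$. Combining the two equivalences, $g$ is non-degenerate on $\Gamma_+$ if and only if this zero-set condition holds for every pair $(\Delta,\Delta')$ in which $\Delta$ is a compact facet and $\Delta'\subseteq\Delta$ is a compact face.

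It then remains to identify the family of faces $\Delta'$ arising this way with the family of \emph{all} compact faces of $\Gamma_+$, and this is where I expect the main obstacle to lie. The backward direction is easy: every compact face of a compact facet is again a compact face of $\Gamma_+$, so the hypothesis of the corollary covers every pair $(\Delta,\Delta')$ above, giving non-degeneracy by the equivalences just established. For the forward direction I would use the following claim: every compact face $\Delta'$ of a convenient Newton polyhedron $\Gamma_+$ is contained in some compact facet. Granting this, an arbitrary compact face $\Delta'$ occurs as a subface of a compact facet $\Delta$, so the zero-set condition obtained for the pair $(\Delta,\Delta')$ applies to it, yielding exactly the condition over all compact faces stated in the corollary.

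I would prove the geometric claim as follows. A face is compact precisely when it is supported by some $v\in(\R_+\smallsetminus\{0\})^n$, and such a $v$ lies in the relative interior of the normal cone $N(\Delta')$ of $\Delta'$. The extreme rays of $N(\Delta')$ are the primitive normal vectors of the facets of $\Gamma_+$ containing $\Delta'$; since $\Gamma_+$ is convenient we have $\mathscr F(\Gamma_+)=\mathscr F_c(\Gamma_+)\cup\{e_1,\dots,e_n\}$, so each such extreme ray is either a strictly positive compact-facet normal or one of the coordinate vectors $e_i$. If one of them, say $w$, is a compact-facet normal, then the compact facet $\Delta(w,\Gamma_+)$ contains $\Delta'$ and we are done. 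Otherwise all extreme rays of $N(\Delta')$ are coordinate vectors, so $N(\Delta')$ is generated by a subset $\{e_i:i\in S\}$; its relative interior then consists of vectors vanishing outside $S$, which forces $S=\{1,\dots,n\}$ in order for the strictly positive $v$ to exist. But $S=\{1,\dots,n\}$ gives $N(\Delta')=\R^n_+$, whence $\Delta'$ would be the vertex of $\Gamma_+$ attaining the coordinatewise minimum, namely the origin, contradicting $0\notin\Gamma_+$. This contradiction establishes the claim and completes the proof.
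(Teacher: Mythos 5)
Your argument is correct and is essentially the paper's own proof: the paper disposes of this corollary in one line as an immediate consequence of Proposition \ref{revisio} and Corollary \ref{caractnodeg}, which is exactly the chaining you perform. The geometric claim you add --- that every compact face of a convenient Newton polyhedron (with $0\notin\Gamma_+$, as the existence of the Newton filtration forces) is a face of some compact facet --- is precisely what the paper's ``immediate'' silently uses, and your normal-cone proof of it is sound.
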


\begin{proof} It follows immediately as a consequence of Proposition \ref{revisio} and Corollary \ref{caractnodeg}.
\end{proof}

We will use the following lemma in the proof of the main result (Theorem \ref{main}).

\begin{lem}\label{minsr}
Let $J_1,\dots, J_n$ be ideals of $\O_n$ such that $(J_1,\dots, J_n)$ is non-degenerate on $\Gamma_+$.
Then $(J_1+\A_M^r,\dots, J_n+\A_M^r)$ is also non-degenerate on $\Gamma_+$, for all $r\geq 1$, where $M=M_\Gamma$. That is
\begin{equation}\label{IiJ}
e(J_1+\A_M^r,\dots, J_n+\A_M^r)=
\frac{\min\{r_1, rM\}\cdots\min\{r_n, rM\}}{M^n}n!\V_n(\Gamma_-),
\end{equation}
for all $r\geq 1$, where $r_i=\nu_\Gamma(J_i)$, for all $i=1,\dots,n$.
\end{lem}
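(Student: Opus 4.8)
The assertion is a statement about equality in the bound (\ref{nondegongamma}). Since each ideal $J_i+\A_M^r$ contains the finite-colength ideal $\A_M^r$, the family $(J_1+\A_M^r,\dots,J_n+\A_M^r)$ consists of $m$-primary ideals, so its Rees mixed multiplicity coincides with the ordinary mixed multiplicity $e(J_1+\A_M^r,\dots,J_n+\A_M^r)$ and is finite. Moreover $\nu_\Gamma(J_i+\A_M^r)=\min\{r_i,\,rM\}$, because $\overline{\A_M^r}=\B_{rM}$ gives $\nu_\Gamma(\A_M^r)=rM$, whence $\nu_\Gamma(J_i+\A_M^r)=\min\{\nu_\Gamma(J_i),\nu_\Gamma(\A_M^r)\}$. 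Thus, by Definition \ref{idealsnodeg}, formula (\ref{IiJ}) is exactly the assertion that $(J_1+\A_M^r,\dots,J_n+\A_M^r)$ is non-degenerate on $\Gamma_+$, and Proposition \ref{BezoutlikeNewton} already gives the inequality ``$\geq$'' in (\ref{IiJ}). Hence the entire content of the lemma is the reverse inequality, equivalently the non-degeneracy of the perturbed family.

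First I would dispose of the easy range. By Lemma \ref{reverseincl}, the inclusions $J_i\subseteq J_i+\A_M^r$ yield $e(J_1+\A_M^r,\dots,J_n+\A_M^r)\leq\sigma(J_1,\dots,J_n)=\frac{r_1\cdots r_n}{M^n}n!\V_n(\Gamma_-)$, where the last equality is the hypothesis. When $r_i\leq rM$ for every $i$ this right-hand side equals that of (\ref{IiJ}), so both inequalities collapse and the lemma follows at once. The real content is therefore concentrated in the indices with $r_i>rM$, where this crude monotonicity is too weak.

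For the general case I would invoke the characterization of non-degeneracy in Corollary \ref{conseq}. By Proposition \ref{sigmaexists} we may compute $e(J_1+\A_M^r,\dots,J_n+\A_M^r)=e(h_1,\dots,h_n)$ for a sufficiently general $(h_1,\dots,h_n)\in(J_1+\A_M^r)\oplus\cdots\oplus(J_n+\A_M^r)$; writing $h_i=g_i+a_i$ with $g_i\in J_i$ and $a_i\in\A_M^r$ sufficiently general, it then suffices, by Corollary \ref{conseq}, to check for every compact face $\Delta$ of $\Gamma_+$ that the common zeros of $p_\Delta(h_1),\dots,p_\Delta(h_n)$ lie in $\{x_1\cdots x_n=0\}$. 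The decisive computation is the identification of these leading parts: since $\nu_\Gamma(a_i)=rM$, relation (\ref{initial}) gives $p_\Delta(h_i)=p_\Delta(g_i)$ when $r_i<rM$, while for $r_i\geq rM$ the term $a_i$ dominates (or co-dominates, if $r_i=rM$) and, because $\overline{\A_M^r}=\B_{rM}$ is the integrally closed monomial ideal of the dilated polyhedron $r\Gamma_+$ and mixed multiplicities depend only on integral closures, $p_\Delta(h_i)$ becomes a \emph{general} linear combination of all the monomials $x^k$ with $k\in r\Delta\cap\Z^n_+$. Thus on each face $\Delta$ the perturbed system consists of the unchanged forms $\{p_\Delta(g_i):r_i<rM\}$ together with general members of the full, torus-invariant, base-point-free linear system supported on $r\Delta$.

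The heart of the proof, and the step I expect to be the main obstacle, is to show that this mixed system has no common zero on $(\C^*)^n$. All the forms involved are quasi-homogeneous with respect to the primitive vectors supporting the facets through $\Delta$, so after passing to the torus quotient by the corresponding one-parameter subgroups one is reduced, by a Bertini--Kleiman generic-transversality argument for the torus-invariant full system on $r\Delta$, to the purely dimensional statement that the unchanged forms $\{p_\Delta(g_i):r_i<rM\}$ cut out their expected codimension. This is precisely the information encoded in the hypothesis that the \emph{full} family $(J_1,\dots,J_n)$ is non-degenerate: by Corollary \ref{caractnodeg} non-degeneracy is equivalent to finite colength of the leading ideals on $\Delta$ and on all of its compact sub-faces, and it is this all-faces strength, together with the power-substitution device of Corollary \ref{caractnodeg} that brings forms living on faces of different dilations to a common face, which controls the dimension of the common zero locus of any sub-collection of the $p_\Delta(g_i)$. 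Carrying out this dimension bound uniformly over all faces $\Delta$ and all sub-collections, so that the general members supplied by the capped indices genuinely avoid the residual locus of the uncapped ones, is the technical core; once it is in place Corollary \ref{conseq} yields the non-degeneracy of $(J_1+\A_M^r,\dots,J_n+\A_M^r)$ and hence the equality (\ref{IiJ}).
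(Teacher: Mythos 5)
Your setup is sound and in outline coincides with the paper's own strategy: the inequality ``$\geq$'' in (\ref{IiJ}) is free from Proposition \ref{BezoutlikeNewton}, the replacement of $\A_M^r$ by $\A_{rM}$ is legitimate because $\overline{\A_M^r}=\overline{\A_{rM}}$ and mixed multiplicities only see integral closures (you were also right to route the identification of the leading parts through this replacement, since the lattice points of $r\Delta$ need not all be sums of $r$ lattice points of $\Delta$), the case $r_i\leq rM$ for all $i$ collapses by Lemma \ref{reverseincl}, and the general case reduces, via Proposition \ref{sigmaexists} and Proposition \ref{revisio}/Corollary \ref{conseq}, to showing that on each compact facet $\Delta$ the unchanged forms $p_\Delta(g_i)$, for $i$ with $r_i<rM$, together with generic members of the monomial system $\{x^k:k\in r\Delta\}$, generate a finite-colength ideal of $\RR_\Delta$, uniformly over the finitely many facets. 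But precisely at this point --- which you yourself label ``the heart of the proof'' and ``the technical core'' --- you stop: the Bertini--Kleiman transversality argument, the ``torus quotient by one-parameter subgroups,'' and the dimension bound ``uniformly over all faces and all sub-collections'' are announced, not executed. As written, the decisive step is asserted rather than proved, so the proposal has a genuine gap exactly where the lemma's content lies.

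Moreover, the mechanism you propose for the dimension control (the all-faces strength of Corollary \ref{caractnodeg} plus the power-substitution device) is not what makes this work; the paper uses local algebra instead. Since $p_\Delta(g_1),\dots,p_\Delta(g_n)$ generate a finite-colength ideal of the $n$-dimensional ring $\RR_\Delta$, they form a system of parameters, and since $\RR_\Delta$ is Cohen--Macaulay (Hochster), the ideal $I$ generated by the uncapped forms yields $\RR_\Delta/I$ Cohen--Macaulay of dimension $n-s$. The image $H$ of $\A_{rM,\Delta}$ in $\RR_\Delta/I$ has finite colength by Theorem \ref{caractdelta} (as $\Gamma(\A_{rM,\Delta})=r\Delta$), hence analytic spread $n-s$, and the Northcott--Rees theorem on existence of reductions produces sufficiently general linear combinations $h_{s+1},\dots,h_n$ of $\{x^k:k\in r\Delta\}$ whose images generate a reduction of $H$; genericity being Zariski-open, one choice works simultaneously for all facets, so $G=(g_1,\dots,g_s,h_{s+1},\dots,h_n)$ is non-degenerate on $\Gamma_+$, $e(G)$ is computed by (\ref{prel}), and the value is sandwiched against $e(J_1+\A_{rM},\dots,J_s+\A_{rM},\A_{rM},\dots,\A_{rM})$ by Lemma \ref{reverseincl} and Proposition \ref{BezoutlikeNewton}. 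Your Bertini-style plan could likely be completed along the same lines --- general members of an ideal primary for the distinguished maximal ideal cut every positive-dimensional component of $V(I)$ properly, components inside $\{x_1\cdots x_n=0\}$ being harmless --- but that statement is exactly the content of the reduction theorem, and until it (or an actual Bertini argument on $\operatorname{Spec}\RR_\Delta$, with the uniformity over facets made explicit) is written down, the proof is incomplete.
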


\begin{proof}
Let us fix an integer $r\geq 1$. Let $S=\{i: r_i<rM\}$. If $S=\emptyset$, then $J_i\subseteq \A_{rM}$, for all $i=1,\dots, n$. Thus,
since $\overline {\A_M^r}=\overline{\A_{rM}}$ and mixed multiplicities are invariant by integral closures, we have
\begin{align*}
e(J_1+\A_M^r,\dots, J_n+\A_M^r)&=e(J_1+\A_{rM},\dots, J_n+\A_{rM})=e(\A_{rM},\dots, \A_{rM})\\ &=e(\A_{rM})=e(\A_M^r)=r^n n!\V_n(\Gamma_-),
\end{align*}
and the result follows.

Let us suppose that $S\neq\emptyset$. After reordering the integers $r_1,\dots, r_n$, we can assume that $S=\{1,\dots, s\}$, for some $s\geq 1$. Then, we have
$$
e(J_1+\A_M^r,\dots, J_n+\A_M^r)=e(J_1+\A_{rM}, \dots, J_s+\A_{rM}, \A_{rM},\dots, \A_{rM}).
$$

By Proposition \ref{sigmaexists}, there exists an element
$(g_1,\dots, g_n)\in J_1\oplus\cdots \oplus J_n$
such that $\nu_\Gamma(g_i)=r_i$, for all $i=1,\dots, n$, and
\begin{equation}\label{converse}
e(g_1,\dots, g_n)=\sigma(J_1,\dots, J_n)= \frac{r_1\cdots r_n}{M^n}e(\A_M).
\end{equation}

Let $\Delta$ be a compact facet of $\Gamma_+$. Let us denote by $I$ the ideal of
$\RR_\Delta$ generated by $p_\Delta(g_1),\dots, p_\Delta(g_s)$. By Proposition \ref{revisio}, the ideal
of $\RR_\Delta$ generated by $p_\Delta(g_1),\dots, p_\Delta(g_n)$ has finite colength in $\RR_\Delta$. Then,
since $\RR_\Delta$ is Cohen-Macaulay of dimension $n$ (see \cite{Hochster1972} or \cite[p.\ 24]{K}), the ring $\RR_\Delta/I$
is Cohen-Macaulay of dimension $n-s$.

Let $\A_{rM,\Delta}$ be the ideal of $\RR_\Delta$ generated by all monomials $x^k\in\A_{rM}$ such that
$k\in r\Delta$. Let us denote the image of $\A_{rM,\Delta}$ in $\RR_\Delta/I$ by $H$.

Since $\Gamma(\A_{rM,\Delta})=r\Delta$, we have that $\A_{rM,\Delta}$ has finite colength in $\RR_\Delta$, by Theorem \ref{caractdelta}.
Then {\it a fortiori} the ideal $H$ has also finite colength in $\RR_\Delta/I$ and hence $H$ has analytic spread
$n-s$. According to the Northcott-Rees theorem of existence of reductions (see \cite[p.\ 166]{HunekeSwanson2006}), there exist sufficiently general $\C$-linear combinations $h_{s+1},\dots, h_n$ of the set of monomials $\{x^k: k\in r\Delta\}$ such that the ideal generated by the images of $h_{s+1},\dots, h_n$ in $\RR_\Delta/I$ is a reduction of $H$. By the construction of the elements
$h_{s+1},\dots, h_n$, the image of $h_i$ in $\RR_\Delta/I$ equals the image of $(h_i)_{r\Delta}$ in $\RR_\Delta/I$, for all $i=s+1,\dots,n$.
Moreover $(h_i)_{r\Delta}=p_\Delta(h_i)$, for all $i=s+1,\dots, n$.
In particular, the ideal
$\{p_\Delta(g_1),\dots, p_\Delta(g_s), p_\Delta(h_{s+1}),\dots, p_\Delta(h_{n})\}\RR_\Delta$
has finite colength in $\RR_\Delta$.

Since $\Gamma_+$ has a finite number of facets we conclude that there exist $\C$-generic linear combinations $h_{s+1},\dots, h_n$
of $\{x^k: \nu_\Gamma(k)=rM\}$ such that the ideal of $\RR_\Delta$ generated by $p_\Delta(g_1),\dots, p_\Delta(g_s),
p_\Delta(h_{s+1}),\dots, p_\Delta(h_{n})$ has finite colength in $\RR_\Delta$, for all compact facets $\Delta$ of $\Gamma_+$.
In particular, the map $G=(g_1,\dots,g_s, h_{s+1},\dots, h_n):(\C^n,0)\to (\C^n,0)$ is non-degenerate on $\Gamma_+$, by Proposition \ref{revisio} and then
\begin{equation}\label{prel}
e(G)=\frac{r_1\cdots r_s (rM)^{n-s}}{M^n}n!\V_n(\Gamma_-)
=\frac{\min\{r_1, rM\}\cdots\min\{r_n, rM\}}{M^n}n!\V_n(\Gamma_-).
\end{equation}
Moreover we have the following inequalities, as a direct application of Lemma \ref{reverseincl} and Proposition \ref{BezoutlikeNewton}:
\begin{align*}
e(G)=e(g_1,\dots,g_s, h_{s+1},\dots, h_n)&\geq e(J_1+\A_{rM},\dots,J_s+\A_{rM},\A_{rM},\dots,\A_{rM})\\
&\geq \frac{\min\{r_1, rM\}\cdots\min\{r_n, rM\}}{M^n}n!\V_n(\Gamma_-).
\end{align*}
Then the result follows by applying relation (\ref{prel}).
\end{proof}

The following definition is fundamental in our study of \L ojasiewicz exponents via Newton filtrations.

\begin{defn}\label{linkage} Let $J_1,\dots, J_n$ be ideals of $\O_n$ such that $\sigma(J_1,\dots, J_n)<\infty$. Let $r_i=\nu_\Gamma(J_i)$, for $i=1,\dots, n$, let $p=\max\{r_1,\dots, r_n\}$ and let $A=\{i: r_i=p\}$. Let $I$ be a proper ideal of $\O_n$.
We say that the pair $(I; J_1,\dots, J_n)$ is {\it $\Gamma_+$-linked} when there exists some $i_0\in A$ such that
$$
(J_1,\dots, J_{i_0-1},I, J_{i_0+1},\dots, J_n)
$$
is non-degenerate on $\Gamma_+$
\end{defn}

If $g\in\O_n$, then we will write $(I;g,J_2,\dots, J_n)$ instead of $(I;\langle g\rangle, J_2,\dots, J_n)$. We will adopt the same simplification of the notation if any other ideal $J_i$ is generated by only one element, for some $i\in\{1,\dots, n\}$.

Under the conditions of the previous definition, if we assume that $(J_1,\dots, J_n)$ is non-degenerate on $\Gamma_+$, then
$(I; J_1,\dots, J_n)$ is $\Gamma_+$-linked if and only if there exists some $i_0\in A$ such that
\begin{equation}\label{charactlink}
\sigma(J_1,\dots, J_{i_0-1},I, J_{i_0+1},\dots, J_n)=\frac{\nu_\Gamma(I)}{p}\sigma(J_1,\dots, J_n),
\end{equation}
by a direct application of Definition \ref{idealsnodeg}.
In particular, $p$ must be a divisor
of $\nu_\Gamma(I)\sigma(J_1,\dots, J_n)$ in this case (see Example \ref{segonex}).

 Here we show the main result of the article.

\begin{thm}\label{main}
Let $J_1,\dots, J_n$ be a set of ideals of $\O_n$. Let $\nu(J_i)=r_i$, for all $i=1,\dots,n$. Let us
suppose that $(J_1,\dots, J_n)$ is non-degenerate on $\Gamma_+$. Let $I$ be a proper ideal of $\O_n$.
Then
\begin{equation}\label{central}
\LL_I(J_1,\dots, J_n)\leq \LL_I(\B_{r_1},\dots, \B_{r_n}) \leq
\frac{\max\{r_1,\dots,
r_n\}}{\nu_\Gamma(I)}
\end{equation}
and the above inequalities turn into equalities if $(I;J_1,\dots, J_n)$ is
$\Gamma_+$-linked.
\end{thm}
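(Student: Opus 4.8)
The plan is to treat the two displayed inequalities separately and then to prove the reverse bound $\LL_I(J_1,\dots,J_n)\geq \frac{\max_i r_i}{\nu_\Gamma(I)}$ that forces equality under $\Gamma_+$-linkage. Throughout write $M=M_\Gamma$, $p=\max\{r_1,\dots,r_n\}$ and $q=\nu_\Gamma(I)$ (note $q\geq 1$ since $I$ is proper, so $I\subseteq\B_1$).

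First I would reduce the left inequality to a comparison of Rees mixed multiplicities. Since $r_i=\nu_\Gamma(J_i)$ we have $J_i\subseteq\B_{r_i}$, and $\nu_\Gamma(\B_{r_i})=r_i$ because $r_i$ lies in the image $\phi_\Gamma(\Z^n_+)$ (it is attained by a monomial appearing in a minimal generator of $J_i$). Combining the reverse-inclusion estimate of Lemma \ref{reverseincl}, the lower bound of Proposition \ref{BezoutlikeNewton} applied to $(\B_{r_1},\dots,\B_{r_n})$, and the non-degeneracy hypothesis for $(J_1,\dots,J_n)$ (Definition \ref{idealsnodeg}), I obtain the chain
$$
\sigma(\B_{r_1},\dots,\B_{r_n})\geq \frac{r_1\cdots r_n}{M^n}e(\A_M)=\sigma(J_1,\dots,J_n)\geq\sigma(\B_{r_1},\dots,\B_{r_n}).
$$
Hence $\sigma(J_1,\dots,J_n)=\sigma(\B_{r_1},\dots,\B_{r_n})$ and, in passing, $(\B_{r_1},\dots,\B_{r_n})$ is itself non-degenerate on $\Gamma_+$. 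The left inequality in (\ref{central}) then follows immediately from Proposition \ref{uppers}.

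For the right inequality I would compute $\LL_I(\B_{r_1},\dots,\B_{r_n})$ through a transitivity step, using that $\O_n$ is quasi-unmixed. Because $\phi_\Gamma$ is homogeneous of degree one, the monomial ideals satisfy $\overline{\B_r^{\,s}}=\B_{sr}$, and since Rees mixed multiplicities are invariant under integral closure one gets $r_{\A_M}(\B_{r_1}^s,\dots,\B_{r_n}^s)=r_{\A_M}(\B_{sr_1},\dots,\B_{sr_n})$. Feeding the non-degenerate family $(\B_{sr_1},\dots,\B_{sr_n})$ into Lemma \ref{minsr} shows that the equality $e(\B_{sr_1}+\A_M^r,\dots)=e(\B_{sr_1},\dots)$ holds exactly when $rM\geq sp$, whence $r_{\A_M}(\B_{sr_1},\dots,\B_{sr_n})=\lceil sp/M\rceil$ and therefore $\LL_{\A_M}(\B_{r_1},\dots,\B_{r_n})=p/M$. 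On the other hand $I^M\subseteq\B_{qM}=\overline{\A_M^{\,q}}$, so $\LL_I(\A_M)\leq M/q$ by (\ref{LIGeneral2}). Applying Lemma \ref{transit} with the finite-colength ideal $\A_M$ then gives
$$
\LL_I(\B_{r_1},\dots,\B_{r_n})\leq \LL_I(\A_M)\,\LL_{\A_M}(\B_{r_1},\dots,\B_{r_n})\leq\frac{M}{q}\cdot\frac{p}{M}=\frac{p}{q}.
$$

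To obtain equality when $(I;J_1,\dots,J_n)$ is $\Gamma_+$-linked it suffices to prove $\LL_I(J_1,\dots,J_n)\geq p/q$. Fix the index $i_0$ with $r_{i_0}=p$ supplied by the linkage, so that $(J_1,\dots,I,\dots,J_n)$ with $I$ in slot $i_0$ is non-degenerate and, by (\ref{charactlink}), $\sigma(J_1,\dots,I,\dots,J_n)=\frac{q}{p}\sigma(J_1,\dots,J_n)$. I would first record the exact filtration identities $\nu_\Gamma(J_i^s)=sr_i$ and $\nu_\Gamma(I^r)=rq$, which follow from the homogeneity of $\phi_\Gamma$ and the fact that the lowest-weight part of a power equals the power of the lowest-weight part. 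Since leading forms of powers are powers of leading forms, the torus zero-set condition in Corollary \ref{conseq} is unaffected by replacing an ideal by a power; hence both $(J_1^s,\dots,J_n^s)$ and $(J_1^s,\dots,I^r,\dots,J_n^s)$ are non-degenerate on $\Gamma_+$, and evaluating Definition \ref{idealsnodeg} with the values above yields $\sigma(J_1^s,\dots,I^r,\dots,J_n^s)=\frac{rq}{sp}\,\sigma(J_1^s,\dots,J_n^s)$. Because $J_{i_0}^s+I^r\supseteq I^r$, Lemma \ref{reverseincl} gives
$$
\sigma(J_1^s+I^r,\dots,J_n^s+I^r)\leq\sigma(J_1^s,\dots,I^r,\dots,J_n^s)=\frac{rq}{sp}\,\sigma(J_1^s,\dots,J_n^s),
$$
which is strictly smaller than $\sigma(J_1^s,\dots,J_n^s)$ whenever $rq<sp$. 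Thus every integer $r<sp/q$ fails to realize the defining equality, so $r_I(J_1^s,\dots,J_n^s)\geq\lceil sp/q\rceil\geq sp/q$ for all $s$; dividing by $s$ and passing to the infimum gives $\LL_I(J_1,\dots,J_n)\geq p/q$. Together with the two upper bounds this pins all three quantities to $p/q$.

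The main obstacle I anticipate is the step in the last paragraph: the stability of non-degeneracy under taking powers of the ideals, together with the exactness of $\nu_\Gamma(J_i^s)=sr_i$ and $\nu_\Gamma(I^r)=rq$. These are precisely what make the $\Gamma_+$-linked multiplicity $\frac{q}{p}\sigma$ scale correctly across all powers $s,r$ and thereby convert the single-step strict drop into the asymptotic lower bound; justifying the power stability cleanly at the level of families of ideals (via generic generators and Corollary \ref{conseq}) is the delicate point. The remaining arguments are routine applications of the reverse-inclusion, transitivity, and integral-closure invariance results already established.
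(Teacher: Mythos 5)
Your skeleton coincides with the paper's proof (the sandwich $\sigma(J_1,\dots,J_n)=\sigma(\B_{r_1},\dots,\B_{r_n})$ plus Proposition \ref{uppers}; the threshold computation $r_{\A_M}(\B_{r_1}^s,\dots,\B_{r_n}^s)=\lceil sp/M\rceil$ fed into Lemma \ref{transit}; a strict drop of $\sigma$ below $sp/q$ for the linked lower bound), but two of your steps are not sound as written. The first is the identity $\overline{\B_r^{\,s}}=\B_{sr}$: it is false in general. Indeed $\B_{sr}$ is the ideal of lattice points of $s\{\phi_\Gamma\geq r\}$, while $\overline{\B_r^{\,s}}$ is the ideal of lattice points of $s\,\Gamma_+(\B_r)$, and $\Gamma_+(\B_r)$ --- the convex hull of the \emph{lattice} points of $\{\phi_\Gamma\geq r\}$ --- is strictly smaller than $\{\phi_\Gamma\geq r\}$ whenever the vertices of that region are non-integral. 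Concretely, for the polyhedron of Example \ref{primerex}, where $\phi_\Gamma(k)=\min\{2k_1+3k_2,3k_1+2k_2\}$ and $M_\Gamma=10$, take $r=5=\phi_\Gamma(1,1)$: then $\B_5=\langle xy,x^3,y^3\rangle$ and $x^5\in\B_{10}$ (as $\phi_\Gamma(5,0)=10$), but $(5,0)\notin 2\Gamma_+(\B_5)$, since the edge of $\Gamma_+(\B_5)$ through $(3,0)$ and $(1,1)$ lies on $k_1+2k_2=3$ and $5<6$; hence $\B_{10}\not\subseteq\overline{\B_5^{\,2}}$. The repair is what the paper actually does: apply Lemma \ref{minsr} directly to the powered family $(\B_{r_1}^s,\dots,\B_{r_n}^s)$, whose non-degeneracy, with $\nu_\Gamma(\B_{r_i}^s)=sr_i$, comes from the homogeneity property discussed next.

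The second problem is precisely the point you flag as delicate, and the route you sketch would not close it. Non-degeneracy of a \emph{family of ideals} is a statement about $\sigma$, hence about sufficiently general elements (Proposition \ref{sigmaexists}); a sufficiently general element of $J_i^s$ is not of the form $g_i^s$, so ``leading forms of powers are powers of leading forms'' together with Corollary \ref{conseq} does not transfer non-degeneracy to $(J_1^s,\dots,I^r,\dots,J_n^s)$. (Also $\nu_\Gamma(I^r)=r\,\nu_\Gamma(I)$ is only obvious as an inequality $\geq$; exactness requires excluding cancellation across the different cones $C(\Delta)$, i.e.\ reducedness of the Newton graded algebra.) The missing ingredient, which the paper cites as relation (\ref{primera}), is the homogeneity of Rees mixed multiplicities, $\sigma(J_1^{a_1},\dots,J_n^{a_n})=a_1\cdots a_n\,\sigma(J_1,\dots,J_n)$ \cite[Lemma 2.6]{Bivia2009}. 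With it, $\sigma(J_1^s,\dots,J_n^s)=s^n\sigma$ and $\sigma(J_1^s,\dots,I^r,\dots,J_n^s)=s^{n-1}r\,\frac{q}{p}\,\sigma$ follow at once, and your final comparison via Lemma \ref{reverseincl} (strict drop when $rq<sp$) is then exactly the paper's computation (\ref{primera})--(\ref{segona}); note, moreover, that you never need equalities here, only the upper bound on the mixed $\sigma$ (obtainable alternatively from $e(g_1^s,\dots,h^r,\dots,g_n^s)=s^{n-1}r\,e(g_1,\dots,h,\dots,g_n)$ for general realizing elements, via Lemma \ref{reverseincl}) and the lower bound $\sigma(J_1^s,\dots,J_n^s)\geq s^n\sigma$, which Proposition \ref{BezoutlikeNewton} yields from the easy inequality $\nu_\Gamma(J_i^s)\geq sr_i$. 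The remaining parts of your argument are correct, and one is a genuine simplification of the paper: bounding $\LL_I(\A_M)\leq M/q$ from $I^M\subseteq\B_{qM}=\overline{\A_M^{\,q}}$ via (\ref{LIGeneral2}) is valid --- this special case of your closure identity does hold, because $\{\phi_\Gamma\geq M\}=\Gamma_+$ when $\Gamma_+$ is convenient --- and it bypasses the paper's appeal to the asymptotic Samuel function and Proposition \ref{Impa}.
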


\begin{proof} Along this proof we set $M=M_\Gamma$. By a direct application of Lemma \ref{reverseincl} and Proposition
\ref{BezoutlikeNewton} we have
$$
\sigma(J_1,\dots,J_n)\geq \sigma(\B_{r_1},\dots,\B_{r_n})\geq
\frac{r_1\cdots r_n}{M^n}n!\V_n(\Gamma_+).
$$
Since $(J_1,\dots, J_n)$ is non-degenerate on $\Gamma_+$, the previous inequalities show that
$\sigma(J_1,\dots,J_n)=\sigma(\B_{r_1},\dots,\B_{r_n})$. Hence we can apply Proposition
\ref{uppers} to deduce the inequality
$$
\LL_I(J_1,\dots, J_n)\leq \LL_I(\B_{r_1},\dots, \B_{r_n}).
$$
Let us denote $\max\{r_1,\dots, r_n\}$ and $\nu_\Gamma(I)$ by $p$ and $q$,
respectively. Let us see first that $\LL_I(\B_{r_1},\dots, \B_{r_n})\leq\frac pq$.

Since $\sigma(\B_{r_1},\dots, \B_{r_n})<\infty$, we can compute
the number $r_{\A_M}(\B^s_{r_1},\dots, \B^s_{r_n})$, for all $s\geq 1$:
\begin{align*}
r_{\A_M}&(\B^s_{r_1},\dots, \B^s_{r_n})=\min\left\{r\geq 1:
\sigma(\B^s_{r_1},\dots, \B^s_{r_n})=e(\B^s_{r_1}+\A_M^r,\dots,
\B^s_{r_n}+\A_M^r)\right\}\\
&=\min\left\{r\geq 1: \frac{sr_1\cdots sr_n}{M^n}n!\V_n(\Gamma_-)=\frac{\min\{sr_1, rM\}\cdots \min\{sr_n, rM\}}{M^n}n!\V_n(\Gamma_-)\right\}\\
&=\min\big\{r\geq 1: rM\geq \max\{sr_1,\dots, sr_n\}\big\}\\
&=\min\left\{r\geq 1: r\geq \frac{\max\{sr_1,\dots,
sr_n\}}{M}\right\}
=\left\lceil\frac{\max\{sr_1,\dots, sr_n\}}{M}\right\rceil,
\end{align*}
where $\lceil a\rceil$ denotes the least integer greater than or
equal to $a$, for any $a\in \R$, and the second equality is a direct
application of Lemma \ref{minsr}.
Therefore
\begin{align*}
\LL_{\A_M}(\B_{r_1},\dots, \B_{r_n})&=
\inf_{s\geq 1}\frac{r_{\A_M}(\B^s_{r_1},\dots, \B^s_{r_n})}{s}
\leq
\inf_{a\geq 1}\frac{r_{\A_M}(\B^{aM}_{r_1},\dots,\B^{aM}_{r_n})}{aM}\\
&=\inf_{a\geq 1}\frac{1}{aM}
\left\lceil\frac{\max\{aM r_1,\dots,aM r_n\}}{M}\right\rceil
=\frac{\max\{r_1,\dots, r_n\}}{M}.
\end{align*}

By \cite[Théorème 6.3]{LT2008} we have the relation $\LL_I(\A_M)=\frac{1}{\overline{\nu}_{\A_M}(I)}$,
where $\overline{\nu}_{\A_M}$ is the asymptotic Samuel function of $\A_M$. We observe that $\Gamma_+(\A_M)=\Gamma_+$.
Then, since $\A_M$ is a monomial ideal we have
$$
\LL_I(\A_M)=\frac{M}{\nu_\Gamma(I)},
$$
as a consequence of Proposition \ref{Impa}. Therefore, by Lemma \ref{transit} we obtain
\begin{align*}
\LL_I(\B_{r_1},\dots, \B_{r_n})&\leq
\LL_I(\A_M)\LL_{\A_M}(\B_{r_1},\dots,\B_{r_n}) \\
&\leq\frac{M}{\nu_\Gamma(I)}\frac{\max\{r_1,\dots,
r_n\}}{M}
=\frac{\max\{r_1,\dots, r_n\}}{\nu_\Gamma(I)}=\frac{p}{q}.
\end{align*}

Supposing that $(I; J_1,\dots, J_n)$ is $\Gamma_+$-linked, let us prove that $\LL_I(J_{1},\dots, J_{n})\geq \frac{p}{q}$.
By the definition of $\LL_I(J_{1},\dots, J_{n})$, this inequality holds if
and only if
$$
\frac{r_I(J_1^s,\dots, J_n^s)}{s}\geq \frac pq
$$
for all $s\geq 1$. By Lemma \ref{rpowers} we have that
$qr_I(J_1^s,\dots, J_n^s)\geq r_I(J_1^{sq},\dots, J_n^{sq})$, for all
$s\geq 1$. Therefore it suffices to show that
\begin{equation}\label{suficient1}
r_I(J_1^{sq},\dots, J_n^{sq})> sp-1,
\end{equation}
for all $s\geq 1$. Let us fix an integer $s\geq 1$. Then relation
(\ref{suficient1}) is equivalent to saying that
\begin{equation}\label{suficient2}
\sigma(J_1^{sq},\dots, J_n^{sq})> \sigma(J_1^{sq}+I^{sp-1}, \dots,
J_n^{sq}+I^{sp-1}).
\end{equation}

By \cite[Lemma 2.6]{Bivia2009} we have
\begin{equation}\label{primera}
\sigma(J^{sq}_1,\dots, J^{sq}_n)=
(sq)^n\sigma(J_1,\dots, J_n).
\end{equation}
Let us denote the multiplicity $\sigma(J_1,\dots, J_n)$ also by $\sigma$.

Let $A=\{i: r_i=p\}$. By hypothesis, there exists an index $i_0\in A$ such that
$$
(J_1,\dots, J_{i_0-1},I,J_{i_0+1},\dots, J_n)
$$
is non-degenerate on $\Gamma_+$.
Then, we have $\sigma(J_1,\dots, J_{i_0-1},I,J_{i_0+1},\dots, J_n)=\frac qp\sigma$ (see relation (\ref{charactlink})) and hence
\begin{align}
(sq)^{n-1}(sp-1)\,\frac{q}{p}\sigma &=\sigma(J_{1}^{sq},\dots, J_{i_0-1}^{sq}, I^{sp-1},J_{i_0+1}^{sq}\dots, J_n^{sq})\nonumber   \\ &\geq \sigma(J_1^{sq}+I^{sp-1}, \dots, J_n^{sq}+I^{sp-1})\label{segona},
\end{align}
where the second inequality comes from Lemma \ref{reverseincl}. An elementary computation shows that
$$
(sq)^n\sigma>(sq)^{n-1}(sp-1)\,\frac{q}{p}\sigma
$$
if and only if $sp>sp-1$, which is the case. Thus, comparing (\ref{primera}) and (\ref{segona})
we conclude that relation (\ref{suficient2}) holds and hence the result is proven.
\end{proof}

\begin{ex}\label{primerex} Let us consider the ideals $J_1$ and $J_2$ of $\O_2$ given by $J_1=\langle x^5, x^2y^2, y^5 \rangle$
and $J_2=\langle x^3y^3\rangle$. Let $\Gamma_+=\Gamma_+(J_1)$. The filtrating map associated to $\Gamma_+$ is given by
$\phi_\Gamma(k_1,k_2)=\min\{2k_1+3k_2,3k_1+2k_2\}$, for all $(k_1,k_2)\in\R^2_+$. Hence $\nu_\Gamma(J_1)=M_\Gamma=10$,
$\nu_\Gamma(J_2)=15$ and $\nu_\Gamma(m)=2$. Using the program {\it Singular} \cite{Singular} we check that
$\sigma(J_1,J_2)=30$. Then we have the relation
$$
\sigma(J_1,J_2)=30=\frac{\nu_\Gamma(J_1)\nu_\Gamma(J_2)}{M_\Gamma^2}e(J_1),
$$
which shows that $(J_1,J_2)$ is non-degenerate on $\Gamma_+$. Moreover we have that relation (\ref{charactlink}) holds in this context, that is
$$
\sigma(J_1, m)=4=\frac{\nu_\Gamma(m)}{\max\{\nu_\Gamma(J_1),\nu_\Gamma(J_2)\}}\sigma(J_1,J_2).
$$
Then $\LL_0(J_1,J_2)=\frac{15}{2}$, by Theorem \ref{main}.
\end{ex}

\begin{ex}\label{segonex} Let us consider the ideal of $\O_2$ given by $J=\langle x^4, xy, y^5\rangle$
and let $\Gamma_+=\Gamma_+(J)$. We observe that $e(J)=9$ and that the filtrating map associated to $\Gamma_+$ is given by
$\phi_\Gamma(k_1,k_2)=20\min\{\frac{k_1+3k_2}{4},\frac{4k_1+k_2}{5}\}$, for all $(k_1,k_2)\in\R^2_+$, with $M_\Gamma=20$.
Let $J_1=\langle x^2y^2, x^8\rangle$ and let $J_2=\langle xy, y^5\rangle$.
Then we observe that $\nu_\Gamma(J_1)=40$, $\nu_\Gamma(J_2)=20$ and moreover
$$
\sigma(J_1,J_2)=18=\frac{\nu_\Gamma(J_1)\nu_\Gamma(J_2)}{20^2}e(J).
$$
Then $(J_1,J_2)$ is non-degenerate on $\Gamma_+$. If $(m;J_1,J_2)$ were $\Gamma_+$-linked
then the multiplicity $\sigma(m,J_2)$ would be equal to $\frac{\nu_\Gamma(m)\nu_\Gamma(J_2)}{20^2}e(J)=\frac{9}{5}\notin\Z$.
However the \L ojasiewicz exponent $\LL_0(J_1,J_2)$ attains the maximum possible value, that is
$$
\LL_0(J_1,J_2)=10=\frac{\max\{\nu_\Gamma(J_1), \nu_\Gamma(J_2)\}}{\nu_\Gamma(m)}=\frac{40}{4}=10,
$$
where the first equality follows from \cite[\S 4]{Bivia2009} (see also \cite{Lenarcik}).
\end{ex}

\begin{ex}\label{tercerex} Let us fix an integer $a\geq 2$. Let us consider the map $g=(g_1,g_2,g_3):(\C^3,0)\to (\C^3,0)$ given by
\begin{align*}
g_1(x,y,z)&=x^6+y^6-z^5+xyz\\
g_2(x,y,z)&=x^2y^2z^2\\
g_3(x,y,z)&=y^{6a}+z^{5a}.
\end{align*}
Let $\Gamma_+=\Gamma_+(g_1)$. We have that $M_\Gamma=30$ and the Newton filtration $\nu_\Gamma$ is determined by the filtrating map
$\phi_\Gamma:\R^3_+\to \R_+$ given by
$$
\phi_\Gamma(k_1,k_2,k_3)=\min\big\{19k_1+5k_2+6k_3,\, 5k_1+19k_2+6k_3,\, 5\big(k_1+k_2+4k_3\big)\big\}.
$$
Using Corollary \ref{conseq}, it is immediate to check that
the map $g$ is non-degenerate on $\Gamma_+$. Moreover $\nu_\Gamma(g_1)=30$, $\nu_\Gamma(g_2)=60$ and $\nu_\Gamma(g_3)=30a$.

Let $m$ denote the maximal ideal of $\O_3$. Let $h$ denote a $\C$-generic linear form. Then $\nu_\Gamma(m)=\nu_\Gamma(h)=5$.
Again by Corollary \ref{conseq}, we obtain that the map $(g_1,g_2,h)$ is non-degenerate on $\Gamma_+$. Hence we conclude that $(m; g_1,g_2,g_3)$ is $\Gamma_+$-linked. Then, by Theorem \ref{main}
we obtain
$$
\LL_0(g_1, g_2, g_3)=\frac{\max\{\nu_\Gamma(g_1), \nu_\Gamma(g_2), \nu_\Gamma(g_3)\}}{\nu_\Gamma(m)}=6a.
$$
\end{ex}

\begin{defn} Let $\Delta$ be a compact facet of $\Gamma_+$. We say that $\Delta$ is an {\it inner} facet of $\Gamma_+$ when no vertex of $\Delta$ is contained in some coordinate axis of $\R^n$.
\end{defn}

\begin{rem}\label{starshaped}
If $g=(g_1,\dots, g_n):(\C^n,0)\to (\C^n,0)$ is a non-degenerate map on $\Gamma_+$
and $\Delta$ denotes a compact facet of $\Gamma_+$, then the ideal of $\RR_\Delta$ generated by
$p_\Delta(g_1),\dots, p_\Delta(g_n)$ has finite colength in $\RR_\Delta$ (see Proposition \ref{revisio}). In particular
$p_\Delta(g_i)\neq 0$, for all $i=1,\dots, n$, since $\dim (\RR_\Delta)=n$.

We observe that if $h$ is a $\C$-linear form and $\Delta$ is an inner facet of $\Gamma_+$ then $p_\Delta(h)=0$.
Then, if we suppose that the pair $(m; g_1,\dots, g_n)$ is $\Gamma_+$-linked, we are forcing the Newton polyhedron
$\Gamma_+$ to have no inner facets. The same happens when replacing the maximal ideal $m$ by any ideal
whose support is contained in the union of the coordinate axis. Therefore, in the aim of applying Theorem \ref{main} to obtain the exact value
of $\LL_0(g_1,\dots, g_n)$ via the Newton filtration induced by $\Gamma_+$, we need the Newton polyhedron $\Gamma_+$ to have no inner facets.

We remark that the Newton polyhedra $\Gamma_+$ that appear in Examples \ref{primerex} and \ref{tercerex} do not have inner facets.
\end{rem}

\section{Applications to weighted homogeneous filtrations}\label{whfiltrations}

Along this section we will denote by $w$ a primitive vector $w=(w_1,\dots,w_n)\in\Z^n_{\geq 1}$. Let $w_0=\min_iw_i$ and let $A_w=\{i: w_i=w_0\}$.
If $h\in\O_n$, $h\neq0$, then we denote by $d_w(h)$ the $w$-degree of $h$, that is, $d_w(h)=\min\{\langle w,k\rangle: k\in\supp(h)\}$, where $\langle\,, \rangle$ denotes the standard scalar product in $\R^n$. If $h=0$ then we set $d_w(h)=+\infty$. Moreover, if $J$ is an ideal of $\O_n$ then we
define $d_w(J)=\min\{d_w(h): h\in J\}$.

Let us denote by $\Gamma_+^w$ the Newton polyhedron in $\R^n_+$ determined by
$\{\frac{w_1\cdots w_n}{w_1}e_1,\dots, \frac{w_1\cdots w_n}{w_n}e_n\}$ and by $\Gamma^w$ the Newton boundary of $\Gamma_+^w$. It is straightforward to see that $\Gamma^w$ has only one compact facet, which
is supported by $w$, and that the weighted homogeneous filtration induced by $w$ (see \cite[Section 4]{BiviaEncinas2011})
coincides with the Newton filtration of $\O_n$ induced by $\Gamma_+^w$. That is $d_w(h)=\nu_{\Gamma^w}(h)$, for all $h\in\O_n$.

In \cite[p.\ 584]{BiviaEncinas2011} we introduced the following definition, which lead us to formulate
a sufficient condition for $\LL_0(J_1,\dots, J_n)$ to attain the bound $\frac{1}{w_0}\max\{d_w(J_1),\dots, d_w(J_n)\}$ (see \cite[Theorem 4.7]{BiviaEncinas2011} for details) and to derive interesting consequences (see \cite[Proposition 4.14 and Corollary 4.16]{BiviaEncinas2011}).

\begin{defn}\label{defwmatching}\cite[p.\ 584]{BiviaEncinas2011}
Let $J_1,\dots, J_n$ be a family of ideals of $\O_n$ and let
$r_i=d_w(J_i)$, for all $i=1,\dots, n$. We say that the $n$-tuple of ideals $(J_1,\dots, J_n)$
\textit{admits a $w$-matching} if there exist a permutation $\tau$
of $\{1,\ldots,n\}$ and an index $i_{0}\in\{1,\ldots,n\}$ such that
\begin{enumerate}
    \item[(a)]  $w_{i_{0}}=\min\{w_{1},\ldots,w_{n}\}$,
    \item[(b)]  $r_{\tau(i_{0})}=\max\{r_{1},\ldots,r_{n}\}$ and
    \item[(c)]  the pure monomial $x_{i}^{r_{\tau(i)}/w_i}$
    belongs to $J_{\tau(i)}$, for
    all $i\neq i_{0}$.
\end{enumerate}
\end{defn}

\begin{rem}\label{puremon}
Under the conditions of the above definition, if $(J_1,\dots, J_n)$ admits a $w$-matching, then
$J_i$ contains some pure monomial, for all $i$ such that $r_i<\max\{r_1,\dots, r_n\}$.
\end{rem}

Let $J_1,\dots, J_n$ be ideals of $\O_n$ and let $r_i=d_w(J_i)$, for all $i=1,\dots, n$.
In order to simplify the nomenclature, we will say that $(J_1,\dots, J_n)$ is {\it $w$-non-degenerate}
when $(J_1,\dots, J_n)$ is non-degenerate on $\Gamma_+^w$, which is to say that
$\sigma(J_1,\dots, J_n)=\frac{r_1\cdots r_n}{w_1\cdots w_n}$ (see Definition \ref{idealsnodeg}).
Moreover, we will say that the pair $(m; J_1,\dots, J_n)$ {\it $w$-linked} when it is {\it $\Gamma_+^w$-linked} (see Definition \ref{linkage}).

Let $p=\max\{r_1,\dots, r_n\}$ and let $A=\{i: r_i=p\}$.
Then it follows that
the pair $(m; J_1,\dots, J_n)$ is $w$-linked if and only if there exists some $i_0\in A$ such that
\begin{align}
\sigma(J_1,\dots, J_{i_0-1},m, J_{i_0+1},\dots, J_n)=
\frac{\min\{w_1,\dots,w_n\}}{\max\{r_1,\dots, r_n\}}\frac{r_1\cdots r_n}{w_1\cdots w_n},
\end{align}
as a direct application of Definition \ref{linkage}.

\begin{lem}\label{casparticular} Let $J_1,\dots, J_n$ be a family of ideals of $\O_n$. Let us suppose that
$J_1,\dots, J_n$ admits a $w$-matching. Then $(m; J_1,\dots, J_n)$ is {\it $w$-linked}.
\end{lem}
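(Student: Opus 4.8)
The plan is to peel back the definition of $w$-linkage (Definition \ref{linkage} specialized to $\Gamma_+^w$) and reduce the statement to producing, after replacing one top-$w$-degree ideal by $m$, a family that is $w$-non-degenerate; the $w$-matching will then hand us an explicit monomial map witnessing this. First I would fix the permutation $\tau$ and the index $i_0$ supplied by Definition \ref{defwmatching}, and set $p=\max\{r_1,\dots,r_n\}$ and $A=\{i:r_i=p\}$. By condition (b) we have $\tau(i_0)\in A$, so it suffices to check that the family $(J_1',\dots,J_n')$ obtained from $(J_1,\dots,J_n)$ by replacing its $\tau(i_0)$-th entry by $m$ is non-degenerate on $\Gamma_+^w$. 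Writing $r_j'=d_w(J_j')$, we have $r_j'=r_j$ for $j\neq\tau(i_0)$ and $r_{\tau(i_0)}'=d_w(m)=w_0$; it is here, and only here, that condition (a), $w_{i_0}=w_0$, is used, since it guarantees that the $w$-degree $w_0$ of $m$ is realized by the variable $x_{i_0}$. The quantity to be matched is $\frac{r_1'\cdots r_n'}{w_1\cdots w_n}$.

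Next I would build the monomial map $\Phi=(\Phi_1,\dots,\Phi_n)$ with $\Phi_{\tau(i)}=x_i^{a_i}$, where $a_i=r_{\tau(i)}/w_i$ for $i\neq i_0$ and $a_{i_0}=1$, so that $\Phi_{\tau(i_0)}=x_{i_0}$. By the matching condition (c), $\Phi_{\tau(i)}\in J_{\tau(i)}$ for $i\neq i_0$, while $x_{i_0}\in m$; hence $\Phi_j\in J_j'$ for every $j$. Since the components of $\Phi$ are pure powers of the distinct variables $x_1,\dots,x_n$, the ideal $\langle\Phi_1,\dots,\Phi_n\rangle=\langle x_1^{a_1},\dots,x_n^{a_n}\rangle$ is $m$-primary, whence $\Phi^{-1}(0)=\{0\}$ and, by the formula $e(I)=\ell(\O_n/I)$ for ideals with $n$ generators recalled in Section \ref{secciobasica}, $e(\Phi_1,\dots,\Phi_n)=a_1\cdots a_n$. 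A short computation, in which $w_{i_0}=w_0$ enters through $a_{i_0}=w_0/w_{i_0}=1$, gives $a_1\cdots a_n=\frac{r_1'\cdots r_n'}{w_1\cdots w_n}$.

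Finally I would close with a squeeze. Because $\Phi^{-1}(0)=\{0\}$, the principal ideals satisfy $\sigma(\langle\Phi_1\rangle,\dots,\langle\Phi_n\rangle)=e(\Phi_1,\dots,\Phi_n)<\infty$; since $\langle\Phi_j\rangle\subseteq J_j'$, Lemma \ref{reverseincl} yields $\sigma(J_1',\dots,J_n')\leq e(\Phi_1,\dots,\Phi_n)$, and in particular $\sigma(J_1',\dots,J_n')<\infty$. With finiteness secured, Proposition \ref{BezoutlikeNewton} provides the reverse inequality $\sigma(J_1',\dots,J_n')\geq\frac{r_1'\cdots r_n'}{w_1\cdots w_n}=e(\Phi_1,\dots,\Phi_n)$, forcing equality throughout. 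Thus $(J_1',\dots,J_n')$ is $w$-non-degenerate, which is exactly the condition required for $(m;J_1,\dots,J_n)$ to be $w$-linked.

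I expect no genuine obstacle here; the argument is essentially bookkeeping, and the substantive input is only the simplicial, single-facet geometry of $\Gamma_+^w$ (all of its vertices lie on the coordinate axes), which is what makes the diagonal monomial map $\Phi$ both admissible and extremal. The two points requiring care are: that each exponent $a_i=r_{\tau(i)}/w_i$ is a positive integer, which is automatic because $x_i^{r_{\tau(i)}/w_i}$ is a genuine monomial generator of $J_{\tau(i)}$; and the ordering of the squeeze, so that finiteness of $\sigma(J_1',\dots,J_n')$ is established via Lemma \ref{reverseincl} \emph{before} Proposition \ref{BezoutlikeNewton} is invoked, avoiding any circularity in the hypotheses.
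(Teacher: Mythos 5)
Your proof is correct and follows essentially the same route as the paper's: both squeeze $\sigma(J_1,\dots,J_{j_0-1},m,J_{j_0+1},\dots,J_n)$ between the multiplicity of the diagonal monomial map supplied by condition (c) (an upper bound via Lemma \ref{reverseincl}) and the Bezout-type lower bound of Proposition \ref{BezoutlikeNewton}, forcing $w$-non-degeneracy of the substituted family. Your explicit attention to the integrality of the exponents and to establishing finiteness of $\sigma$ before invoking Proposition \ref{BezoutlikeNewton} is sound bookkeeping that the paper leaves implicit.
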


\begin{proof} Let $r_i=d_w(J_i)$, for all $i=1,\dots, n$. Let $i_0\in\{1,\dots, n\}$ verifying conditions (a), (b) and (c)
of Definition \ref{defwmatching}, for some permutation $\tau$ of $\{1,\dots, n\}$. We observe that
\begin{align}
\frac{r_1\cdots r_n}{w_1\cdots w_n}\frac{w_{i_0}}{r_{\tau(i_0)}} &= e\left(x_1^{r_{\tau(1)}/w_1},\dots, x_{i_0-1}^{r_{\tau(i_0-1)}/w_{i_0-1}},x_{i_0},
x_{i_0+1}^{r_{\tau(i_0+1)}/w_{i_0+1}},\dots, x_{n}^{r_{\tau(n)}/w_{n}} \right)\nonumber\\
&\geq e\left(J_{\tau(1)},\dots, J_{\tau(i_0-1)}, m, J_{\tau(i_0+1)},\dots, J_{\tau(n)}\right) \geq \frac{r_1\cdots r_n}{w_1\cdots w_n}\frac{w_{i_0}}{r_{\tau(i_0)}}\label{second},
\end{align}
where the first inequality of (\ref{second}) is a direct consequence of condition (c) together
with Lemma \ref{reverseincl}, and the second inequality of (\ref{second})
follows immediately from Proposition \ref{BezoutlikeNewton}. Therefore these inequalities are actually equalities. That is,
if we denote $\tau(i_0)$ by $j_0$, then
$$
\sigma(J_1,\dots,J_{j_0-1},m,J_{j_0+1},\dots, J_n)=\frac{r_1\cdots r_n}{w_1\cdots w_n}\frac{w_{i_0}}{r_{j_0}},
$$
which is to say that $(J_1,\dots,J_{j_0-1},m,J_{j_0+1},\dots, J_n)$ is $w$-non-degenerate.
\end{proof}

The converse of Lemma \ref{casparticular} is not true in general, as the following simple example shows.

\begin{ex} Let us consider the ideals $J_1$ and $J_2$ of $\O_2$ given by $J_1=\langle xy\rangle$,
$J_2=m_2^4$ and let $w=(1,1)$. We observe that $\sigma(J_1,m_2)=2=d_w(J_1)d_w(m_2)$. Then $(J_1,J_2)$ is $w$-linked. On the other hand
$(J_1,J_2)$ does not admit a $w$-matching, since $J_1$ does not have any pure monomial (see Remark \ref{puremon}).
\end{ex}

\begin{ex} Let $w=(1,1,2)$ and let us consider the ideals of $\O_3$ given by
$J_1=\langle xy\rangle$ and $J_2=\langle x^4,z^2\rangle$. We observe that $d_w(J_1)=2$, $d_w(J_2)=4$ and
\begin{equation}\label{sigmaquatre}
\sigma(J_1,J_2,m)=4=\frac{d_w(J_1)d_w(J_2)d_w(m)}{2}.
\end{equation}
Let $J_3$ denote any ideal of $\O_3$ of $w$-degree strictly greater than $4$ such that $(J_1,J_2,J_3)$ is $w$-non-degenerate.
Then relation (\ref{sigmaquatre}) implies that $(m;J_1,J_2,J_3)$ is $w$-linked and hence $\LL_0(J_1,J_2,J_3)=d_w(J_3)$, by Theorem \ref{main}.
We observe that $(J_1,J_2,J_3)$ does not admit a $w$-matching, since $J_1$ does not contain any pure monomial.
\end{ex}


If $f\in\O_n$ and $w\in\Z^n_{\geq 1}$ then we denote by $p_w(f)$
the {\it principal part of $f$ with respect to $w$}. That is, if $f=\sum_ka_kx^k$ is the Taylor
expansion of $f$ around the origin, then $p_w(f)$ is the sum of all terms $a_kx^k$ such that
$\langle w,k\rangle=d_w(f)$. We say that $f$ is {\it weighted homogeneous with respect to $w$} when $p_w(f)=f$.
Moreover, the function $f$ is termed {\it semi-weighted homogeneous with respect to $w$} when $p_w(f)$ has an isolated
singularity at the origin.

Let $g=(g_1,\dots, g_n):(\C^n,0)\to (\C^n,0)$ be an analytic map germ.
We say that $g$ is {\it weighted homogeneous with respect to $w$ of degree $(r_1,\dots, r_n)$}
when $g_i$ is weighted homogeneous with respect to $w$ of degree $r_i$, for all $i=1,\dots, n$.
Moreover, we define the map $p_w(g):(\C^n,0)\to (\C^n,0)$ by $p_w(g)=(p_w(g_1),\dots, p_w(g_n))$ and we also define $d_w(g)=(d_w(g_1),\dots, d_w(g_n))$.

If $g^{-1}(0)=\{0\}$, we recall that we denote by $e(g_1,\dots, g_n)$, or by $e(g)$, the multiplicity of the ideal of $\O_n$
generated by $g_1,\dots, g_n$.

Given a vector $r=(r_1,\dots, r_n)\in\Z^n_{\geq 1}$, then we denote by $H(w;r)$ the set of all weighted homogeneous
maps $g:(\C^n,0)\to (\C^n,0)$ for which $d_w(g)=(r_1,\dots, r_n)$ and $g^{-1}(0)=\{0\}$.

We say that $g$ is {\it semi-weighted homogeneous with respect to $w$} when $p_w(g)^{-1}(0)=\{0\}$. In this case we have
$e(g)=e(p_w(g))=\frac{r_1\cdots r_n}{w_1\cdots w_n}$, where
$r_i=d_w(g_i)$, for $i=1,\dots, n$ (see \cite[Section 12]{ArnoldGuseinVarchenko1985}).
It is straightforward to see that if $f\in\O_n$ then $f$
is semi-weighted homogeneous with respect to $w$ if an only if its gradient map $\nabla f=(\frac{\partial f}{\partial x_1},\dots,
\frac{\partial f}{\partial x_n}):(\C^n,0)\to (\C^n,0)$ is semi-weighted homogeneous with respect to $w$.

Let $g=(g_1,\dots, g_n):(\C^n,0)\to (\C^n,0)$ be a complex analytic map. In order to simplify the nomenclature,
we say that $g$ is {\it $w$-linked} when the pair $(m; g_1,\dots, g_n)$ is $w$-linked.
It follows from Definition \ref{linkage} and Proposition \ref{revisio} that $g$ is $w$-linked if and only if $p_w(g)$ is $w$-linked.


If $J$ is an ideal of $\O_n$ then we denote by $p_w(J)$ the ideal of $\O_n$ generated by
$\{p_w(h): h\in J,\, d_w(h)=d_w(J)\}$. If $J$ is a monomial ideal then we have $p_w(J)\subseteq J$.
The following result helps in the task of checking the condition of $w$-linkage.

\begin{lem}\label{oneface} Let $J_1,\dots, J_n$ be ideals of $\O_n$ such that $\sigma(J_1,\dots, J_n)<\infty$. Then
$(J_1,\dots, J_n)$ is $w$-non-degenerate if and only if $\sigma\left(p_w(J_1), \dots,p_w(J_n)\right)<\infty$.
\end{lem}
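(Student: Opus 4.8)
The plan is to combine the single-facet specialization of Proposition~\ref{revisio} with the multiplicity formula for semi-weighted homogeneous maps. First I would record the relevant geometry of $\Gamma_+^w$: it has a unique compact facet $\Delta$, supported by $w$, whose vertices $\frac{w_1\cdots w_n}{w_i}e_i$ lie on all the coordinate axes. Hence $C(\Delta)=\R^n_+$, so $\RR_\Delta=\O_n$, and for every $h\in\O_n$ one has $p_\Delta(h)=p_w(h)$. Thus in this case Proposition~\ref{revisio} reads: a map $g=(g_1,\dots,g_n)$ with $g^{-1}(0)=\{0\}$ is $w$-non-degenerate if and only if $\langle p_w(g_1),\dots,p_w(g_n)\rangle$ has finite colength in $\O_n$. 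Throughout I set $r_i=d_w(J_i)$.

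For the implication ``$(J_1,\dots,J_n)$ $w$-non-degenerate $\Rightarrow$ $\sigma(p_w(J_1),\dots,p_w(J_n))<\infty$'', I would use Proposition~\ref{sigmaexists} to pick a sufficiently general $(g_1,\dots,g_n)\in J_1\oplus\cdots\oplus J_n$. For generic coefficients the leading forms do not all cancel, so $d_w(g_i)=r_i$ and therefore $p_w(g_i)\in p_w(J_i)$; moreover $e(g_1,\dots,g_n)=\sigma(J_1,\dots,J_n)=\frac{r_1\cdots r_n}{w_1\cdots w_n}$ and $g^{-1}(0)=\{0\}$. This equality is exactly the defining relation of $w$-non-degeneracy of the map $g$, so by the reformulation above $\langle p_w(g_1),\dots,p_w(g_n)\rangle$ has finite colength. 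Since $p_w(g_i)\in p_w(J_i)$, Proposition~\ref{sigmaexists} then yields $\sigma(p_w(J_1),\dots,p_w(J_n))<\infty$.

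For the converse I would lift weighted homogeneous elements, and this is where the main obstacle lies: an arbitrary element of the ideal $p_w(J_i)$ is an $\O_n$-combination of its generators and need not be weighted homogeneous, hence cannot be interpreted as a principal part. To circumvent this I would use that $p_w(J_i)$ is generated by finitely many forms $p_w(h_{i1}),\dots,p_w(h_{im_i})$, each weighted homogeneous of degree $r_i$ with $h_{ij}\in J_i$. By Proposition~\ref{sigmaexists}, for sufficiently general coefficients the elements $\phi_i=\sum_j u_{ij}\,p_w(h_{ij})$ generate an ideal of finite colength; each $\phi_i$ is weighted homogeneous of degree $r_i$ and generically nonzero. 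Setting $g_i=\sum_j u_{ij}h_{ij}\in J_i$, we obtain $p_w(g_i)=\phi_i$, so the map $p_w(g)=(\phi_1,\dots,\phi_n)$ has an isolated zero at the origin and $g$ is semi-weighted homogeneous with respect to $w$.

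Finally I would compare multiplicities. The semi-weighted homogeneous formula recalled above gives $e(g_1,\dots,g_n)=e(p_w(g))=\frac{r_1\cdots r_n}{w_1\cdots w_n}$; in particular $g^{-1}(0)=\{0\}$. Since $\langle g_i\rangle\subseteq J_i$ and $\sigma(\langle g_1\rangle,\dots,\langle g_n\rangle)=e(g_1,\dots,g_n)<\infty$, Lemma~\ref{reverseincl} gives $\sigma(J_1,\dots,J_n)\le \frac{r_1\cdots r_n}{w_1\cdots w_n}$, while Proposition~\ref{BezoutlikeNewton} gives the reverse inequality $\sigma(J_1,\dots,J_n)\ge\frac{r_1\cdots r_n}{w_1\cdots w_n}$. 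Hence equality holds, which is precisely the $w$-non-degeneracy of $(J_1,\dots,J_n)$, completing the argument.
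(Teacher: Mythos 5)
Your proof is correct, and its first half is the paper's argument: the paper likewise reduces to a sufficiently general element $(g_1,\dots,g_n)\in J_1\oplus\cdots\oplus J_n$ via Proposition~\ref{sigmaexists} and exploits that $\Gamma_+^w$ has a unique compact facet $\Delta$ with $C(\Delta)=\R^n_+$, so that $\RR_\Delta=\O_n$ and $p_\Delta(g_i)=p_w(g_i)$, at which point Proposition~\ref{revisio} (equivalently Corollaries~\ref{caractnodeg} and~\ref{conseq}) converts non-degeneracy into finite colength of $\langle p_w(g_1),\dots,p_w(g_n)\rangle$. Where you genuinely diverge is the converse. The paper settles both directions in one stroke (``Then the result follows''), which implicitly relies on the principal parts of a sufficiently general element of $J_1\oplus\cdots\oplus J_n$ being a sufficiently general element of $p_w(J_1)\oplus\cdots\oplus p_w(J_n)$ --- a genericity-transfer step it does not spell out. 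You correctly identify this as the delicate point and avoid it by an explicit lifting: you choose weighted homogeneous generators $p_w(h_{ij})$ of $p_w(J_i)$ with $h_{ij}\in J_i$ of $w$-degree $r_i$, take a generic combination $\phi_i$ generating an ideal of finite colength, lift to $g_i=\sum_j u_{ij}h_{ij}\in J_i$ with $p_w(g_i)=\phi_i$, and then sandwich: Lemma~\ref{reverseincl} applied to $\langle g_i\rangle\subseteq J_i$, together with the semi-weighted homogeneous identity $e(g)=e(p_w(g))=\frac{r_1\cdots r_n}{w_1\cdots w_n}$, gives the upper bound, while Proposition~\ref{BezoutlikeNewton} gives the matching lower bound. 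The sandwich is genuinely needed in your setup, since the $h_{ij}$ need not generate $J_i$, so your $g$ is not known to compute $\sigma(J_1,\dots,J_n)$ directly --- and you handle this correctly. The trade-off: the paper's route is shorter and stays entirely within the BFS non-degeneracy criterion; yours makes the converse airtight without the unstated genericity compatibility, at the cost of importing the Arnold--Gusein-Zade--Varchenko multiplicity formula, which the paper recalls in Section~\ref{whfiltrations} independently of this lemma, so no circularity arises.
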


\begin{proof}
By Proposition \ref{sigmaexists}, we can consider a sufficiently general element $(g_1,\dots, g_n)$ of
$J_1\oplus \cdots \oplus J_n$ such that $\sigma(J_1,\dots, J_n)=e(g_1,\dots,g_n)$.
Then $(J_1,\dots, J_n)$ is $w$-non-degenerate if and only if $(g_1,\dots,g_n)$ is also, which is to say that
for any compact facet $\Delta$ of $\Gamma_+^w$, the ideal of $\RR_\Delta$ generated by
$p_\Delta(g_1),\dots, p_\Delta(g_n)$ has finite colength in $\RR_\Delta$, by Corollaries \ref{caractnodeg} and \ref{conseq}.
Moreover $\Gamma_+^w$ has only one compact facet $\Delta$, $\RR_\Delta=\O_n$ and $p_\Delta(g_i)=p_w(g_i)$, for all
$i=1,\dots, n$. Then the result follows.
\end{proof}

\begin{rem}\label{simpl}
Under the conditions of the above result, let $p=\max\{d_w(J_1),\dots, d_w(J_n)\}$. Let $m_w=\langle x_i: i\in A_w\rangle$. Then, as a consequence of Lemma \ref{oneface}, have that $(m;J_1,\dots,J_n)$ is $w$-linked if and only if there exists some $i_0\in \{1,\dots, n\}$ for which $d_w(J_{i_0})=p$ and
$$
\sigma\big(p_w(J_{1}), \dots, p_w(J_{i_0-1}),m_w,p_w(J_{i_0+1}),\dots, p_w(J_{n})\big)<\infty.
$$

In particular, if $A_w=\{i_0\}$, for some $i_0\in\{1,\dots, n\}$, and $f:(\C^n,0)\to (\C,0)$ is a germ of analytic function with an isolated singularity at the origin, then $\nabla f$ is $w$-linked if and only if
$$
\sigma\left( \frac{\partial p_w(f)}{\partial x_1},\dots, \frac{\partial p_w(f)}{\partial x_{i_0-1}},
x_{i_0}, \frac{\partial p_w(f)}{\partial x_{i_0+1}}, \dots,\frac{\partial p_w(f)}{\partial x_n}\right)<\infty.
$$
\end{rem}

\begin{cor}\label{corg} Let $g=(g_1,\dots, g_n):(\C^n,0)\to (\C^n,0)$ be a semi-weighted homogeneous map with respect to $w$. Let $r_i=d_w(g_i)$, for all $i=1,\dots, n$. Then
\begin{equation}\label{cadena}
\LL_0(p_w(g))\leq \LL_0(g)\leq \frac{\max\{r_1,\dots, r_n\}}{\min\{w_1,\dots, w_n\}}
\end{equation}
and both inequalities turn into equalities if $g$ is $w$-linked. In particular
if $f:(\C^n,0)\to (\C,0)$ is a semi-weighted homogeneous function of degree $d$ with respect to $w$, then
\begin{equation}
\LL_0(\nabla f)\leq \frac{d-w_0}{w_0},
\end{equation}
and equality holds if $\nabla f$ is $w$-linked.
\end{cor}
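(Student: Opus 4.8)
The plan is to deduce both the map statement and its gradient specialization from Theorem \ref{main}, applied to the ideals $\langle g_1\rangle,\dots,\langle g_n\rangle$ with reference ideal $I=m$ and $\Gamma_+=\Gamma_+^w$, and to treat the leftmost inequality of (\ref{cadena}) separately by a weighted degeneration. First I would record the numerology of $\Gamma_+^w$: here $M:=M_{\Gamma^w}=w_1\cdots w_n$, $\nu_{\Gamma^w}=d_w$, and $n!\V_n(\Gamma_-^w)=(w_1\cdots w_n)^{n-1}=e(\A_M)$. Since $g$ is semi-weighted homogeneous we have $\sigma(\langle g_1\rangle,\dots,\langle g_n\rangle)=e(g)=\frac{r_1\cdots r_n}{w_1\cdots w_n}$, which is precisely the value $\frac{r_1\cdots r_n}{M^n}e(\A_M)$; thus $(\langle g_1\rangle,\dots,\langle g_n\rangle)$ is non-degenerate on $\Gamma_+^w$ with $\nu_{\Gamma^w}(\langle g_i\rangle)=r_i$. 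As $\nu_{\Gamma^w}(m)=\min_iw_i=w_0$, Theorem \ref{main} with $I=m$ yields $\LL_m(\langle g_1\rangle,\dots,\langle g_n\rangle)\leq\frac{\max\{r_1,\dots,r_n\}}{w_0}$, and by Lemma \ref{soniguals} together with the convention $\LL_m=\LL_0$ the left-hand side is $\LL_0(g)$. This gives the rightmost inequality of (\ref{cadena}).

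The same computation applies verbatim to $p_w(g)$, which is weighted homogeneous and in particular semi-weighted homogeneous of the same degrees $r_i$, so $\LL_0(p_w(g))\leq\frac{\max\{r_1,\dots,r_n\}}{w_0}$ as well. For the equality assertions, suppose $g$ is $w$-linked, that is, $(m;g_1,\dots,g_n)$ is $\Gamma_+^w$-linked; the equality part of Theorem \ref{main} then forces $\LL_0(g)=\frac{\max\{r_1,\dots,r_n\}}{w_0}$. Since $g$ is $w$-linked if and only if $p_w(g)$ is $w$-linked (as recorded before Lemma \ref{oneface}), applying the same argument to $p_w(g)$ gives $\LL_0(p_w(g))=\frac{\max\{r_1,\dots,r_n\}}{w_0}$. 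Hence both inequalities of (\ref{cadena}) become equalities, and it remains only to establish the leftmost inequality $\LL_0(p_w(g))\leq\LL_0(g)$ in general.

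This last inequality is the one part that does \emph{not} follow formally from Theorem \ref{main}, and I expect it to be the main obstacle; it can indeed be strict when $g$ is not $w$-linked. The plan is to exploit the weighted degeneration attached to $w$: for $\lambda\in\C$, $\lambda\neq 0$, set $g^\lambda=(g_1^\lambda,\dots,g_n^\lambda)$ with $g_i^\lambda(x)=\lambda^{-r_i}g_i(\lambda^{w_1}x_1,\dots,\lambda^{w_n}x_n)$. Then $g^\lambda$ is obtained from $g$ by an analytic change of coordinates and a nonzero rescaling, so $\LL_0(g^\lambda)=\LL_0(g)$, while $g^\lambda\to p_w(g)$ as $\lambda\to 0$. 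Writing $I=\langle g_1,\dots,g_n\rangle$ and $I_0=\langle p_w(g_1),\dots,p_w(g_n)\rangle$ and using the expression $\LL_0(J)=\min\{\frac pq:m^p\subseteq\overline{J^q}\}$ from (\ref{LI}), it suffices to prove that $m^p\subseteq\overline{I^q}$ implies $m^p\subseteq\overline{I_0^q}$. I would derive this from the specialization of integral closures along the flat family $\{\langle g_1^\lambda,\dots,g_n^\lambda\rangle\}_\lambda$: the containment $m^p\subseteq\overline{I^q}$ holds on every fibre $\lambda\neq 0$, hence on the total space, and restricting to the special fibre $\lambda=0$ gives $m^p\subseteq\overline{I_0^q}$, since the special fibre of the integral closure of the family is contained in the integral closure of its special fibre. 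Making this specialization precise—for instance by tracking the Rees valuations of the family, or by using the monomial valuation attached to $w$—is the delicate step, the more so because the relevant direction is that the special member $p_w(g)$ has the \emph{smaller} {\L}ojasiewicz exponent.

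The gradient statement then follows by specializing the map result to $g=\nabla f$. If $f$ is semi-weighted homogeneous of degree $d$ with respect to $w$, then $\nabla f$ is semi-weighted homogeneous, $d_w(\partial f/\partial x_i)=d-w_i$, and $p_w(\nabla f)=\nabla p_w(f)$. Taking $r_i=d-w_i$ we get $\max_i r_i=d-\min_iw_i=d-w_0$ and $\min_iw_i=w_0$, so (\ref{cadena}) reads $\LL_0(\nabla p_w(f))\leq\LL_0(\nabla f)\leq\frac{d-w_0}{w_0}$, with equality throughout when $\nabla f$ is $w$-linked.
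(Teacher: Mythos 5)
Your handling of the second inequality of (\ref{cadena}) and of both equality assertions is correct and is essentially the paper's own argument: the numerology of $\Gamma_+^w$ is right ($M_{\Gamma^w}=w_1\cdots w_n$, $\nu_{\Gamma^w}=d_w$, $e(\A_M)=n!\V_n(\Gamma_-^w)=(w_1\cdots w_n)^{n-1}$), semi-weighted homogeneity gives $\sigma(\langle g_1\rangle,\dots,\langle g_n\rangle)=e(g)=e(p_w(g))=\frac{r_1\cdots r_n}{w_1\cdots w_n}$, i.e.\ $w$-non-degeneracy, so Theorem \ref{main} with $I=m$ together with Lemma \ref{soniguals} yields $\LL_0(g)\leq\max\{r_1,\dots,r_n\}/w_0$, with equality when $g$ is $w$-linked; and since $g$ is $w$-linked if and only if $p_w(g)$ is (as the paper records before Lemma \ref{oneface}), the same theorem pins $\LL_0(p_w(g))$ to the same value, forcing both equalities. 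The gradient specialization is also correctly reduced, using $d_w(\partial f/\partial x_i)=d-w_i$ and $p_w(\nabla f)=\nabla p_w(f)$, the partials of $p_w(f)$ being nonzero because $p_w(f)$ has an isolated singularity.

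The genuine gap is in your proof of the leftmost inequality $\LL_0(p_w(g))\leq\LL_0(g)$. The assertion ``the containment $m^p\subseteq\overline{I^q}$ holds on every fibre $\lambda\neq 0$, hence on the total space'' is precisely the hard point and is not a proof: fibrewise integral dependence does not pass to the total space in general. Concretely, the valuative criterion for $m_x^p\subseteq\overline{\mathcal I^q}$ at the origin of $\C\times\C^n$ involves arcs with $\lambda(0)=0$, exactly where the units $\lambda^{-r_i}$ in your family $g_i^\lambda=\lambda^{-r_i}g_i(\lambda^{w_1}x_1,\dots,\lambda^{w_n}x_n)$ degenerate, so the fibrewise information does not transfer by the substitution argument that works at $\lambda_0\neq 0$. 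What rescues the statement is the constancy of the multiplicity $e(g^\lambda)=e(p_w(g))$ along the family, invoked through the principle of specialization of integral dependence or, in this exact form, P\l oski's semicontinuity theorem --- which the paper itself cites (\cite{PloskiArXiv}) as an alternative justification, so your instinct is legitimate, but as written (and as you yourself flag) the delicate step remains open. You should also know that the paper's primary proof avoids deformations entirely and stays within the tools already established: set $I_i=\langle x^k:k\in\Gamma_+(p_w(g_i))\rangle\subseteq J_i=\langle x^k:k\in\Gamma_+(g_i)\rangle$; the chain
\begin{equation*}
e(p_w(g))\;\geq\;\sigma(I_1,\dots,I_n)\;\geq\;\sigma(J_1,\dots,J_n)\;\geq\;\frac{r_1\cdots r_n}{w_1\cdots w_n}
\end{equation*}
(Lemma \ref{reverseincl} and Proposition \ref{BezoutlikeNewton}) collapses to equalities because $e(p_w(g))=e(g)=\frac{r_1\cdots r_n}{w_1\cdots w_n}$; then Proposition \ref{uppers} gives $\LL_0(I_1,\dots,I_n)\leq\LL_0(J_1,\dots,J_n)$, and Theorem \ref{base} identifies these two numbers with $\LL_0(p_w(g))$ and $\LL_0(g)$, since $p_w(g)\in\mathcal S_0(I_1,\dots,I_n)$ and $g\in\mathcal S_0(J_1,\dots,J_n)$. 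Either repair --- citing P\l oski/Teissier for the specialization, or switching to this monomial-ideal comparison --- closes your argument; the version you wrote does not yet.
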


\begin{proof} Let us see the first inequality of (\ref{cadena}). Let $I_i=\langle x^k: k\in\Gamma_+(p_w(g_i))\rangle$ and
let $J_i=\langle x^k: k\in\Gamma_+(g_i)\rangle$, for all $i=1,\dots, n$.
Since $\Gamma_+(p_w(g_i))\subseteq \Gamma_+(g_i)$, we have that $I_i\subseteq J_i$, for all $i=1,\dots,n$. Thus
\begin{equation}\label{epwg}
e(p_w(g))\geq \sigma(I_1,\dots, I_n)\geq \sigma(J_1,\dots, J_n)\geq \frac{r_1\cdots r_n}{w_1\cdots w_n},
\end{equation}
by Lemmas \ref{reverseincl} and \ref{BezoutlikeNewton}.
We know that $e(p_w(g))=e(g)=\frac{r_1\cdots r_n}{w_1\cdots w_n}$ (see \cite[Section 12]{ArnoldGuseinVarchenko1985}). Hence $\sigma(I_1,\dots, I_n)=\sigma(J_1,\dots, J_n)$, by relation (\ref{epwg}). Thus $\LL_0(I_1,\dots, I_n)\leq\LL_0(J_1,\dots, J_n)$, by Proposition \ref{uppers}.
From Theorem \ref{base} we have $\LL_0(I_1,\dots, I_n)=\LL_0(p_w(g))$ and $\LL_0(J_1,\dots, J_n)=\LL_0(g)$.
Then $\LL_0(p_w(g))\leq \LL_0(g)$. This inequality also follows as a consequence of the lower semi-continuity
of \L ojasiewicz exponents in deformations with constant multiplicity, as is proven by P\l oski in \cite{PloskiArXiv}.

The second inequality of (\ref{cadena}) and the
equality under the hypothesis that $g$ is $w$-linked are direct consequences of Theorem \ref{main}.
\end{proof}

Let us consider the case $w=(1,\dots,1)$.
Let $f$ be a semi-weighted homogeneous function of degree $d$ with respect to
$w$ and let $h=p_w(f)$. In particular, $h$ is a homogeneous polynomial.
Since $h$ has an isolated singularity at the origin and $m_w=m$ in this case,
we have clearly that $\nabla f$ is $w$-linked. Hence we obtain the well-known equality $\LL_0(\nabla f)=d-1$
(see for instance \cite[Proposition 2.2]{Ploski1985} or \cite[Theorem 3.5]{Ploski1988}).

The inequality $\LL_0(p_w(g))\leq \LL_0(g)$ of (\ref{cadena}) can be strict, as the following example shows.

\begin{ex}\label{pw} Let us consider the maps $g,g':(\C^2,0)\to (\C^2,0)$ given by
\begin{align*}
g(x,y)&=(x^3,y^4+xy)\\
g'(x,y)&=(x^3,y^4).
\end{align*}
By an application of Proposition \ref{Impa} we have $\LL_0(g')=4$. Moreover $\LL_0(g)=9$, by applying the main result of \cite{Lenarcik} about
the computation of \L ojasiewicz exponents in dimension 2 (or \cite[Section 4]{Bivia2009}). Then we see that
\begin{equation}\label{exemplet}
\LL_0(g')=4<9=\LL_0(g).
\end{equation}

Let $w=(w_1,w_2)\in\Z^2_{\geq 1}$ be any vector such that $w_1>3w_2$. We observe that $g$ is semi-weighted homogeneous
with respect to $w$ with $p_w(g)=g'$. Using Remark \ref{simpl} we observe that
$g$ is not $w$-linked with respect to such vectors of weights.

Moreover, we have that $g,g'\in H((3,1); (9,4))$. Then, relation (\ref{exemplet}) also shows that
if $g,g'\in H(w;r)$, then $g$ and $g'$ do not have the same \L ojasiewicz exponent in general.
\end{ex}

\begin{rem}
We do not know if an example similar to the previous one exists for gradient maps. That is, if we fix a vector
of weights $w\in\Z^n_{\geq 1}$ and $f,f'$ are weighted-homogeneous functions with respect to $w$ with the same
$w$-degree and with an isolated singularity at the origin, it is still not known in general if $\LL_0(\nabla f)=\LL_0(\nabla f')$.
Obviously, this equality holds when $\nabla f$ is $w$-linked and $\nabla f'$ is also, by virtue of Corollary \ref{corg}.

On the other hand, if $f$ is a semi-weighted homogeneous function with respect to $w$, it is still an open question to determine if the equality
$\LL_0(\nabla f)=\LL_0(\nabla p_w(f))$ holds. We recall that $f$ and $p_w(f)$ are topologically equivalent
(see for instance \cite[Corollary 5]{DamonGaffney} or \cite[Corollary 2.1]{Parusinski} for a more general result).
Then, if $f:(\C^n,0)\to (\C,0)$ denotes an arbitrary analytic function germ
with an isolated singularity at the origin we remark that it is not known in general if 
$\LL_0(\nabla f)=\LL_0(\nabla (f\circ\phi))$, where $\phi:(\C^n,0)\to (\C^n,0)$
denotes a germ of homeomorphism.
\end{rem}

If $g:(\C^n,0)\to (\C^n,0)$ is a complex analytic map, then we denote by $I(g)$ the ideal of $\O_n$ generated
by the component functions of $g$. The next result shows a sufficient condition for equality in the first
inequality of (\ref{cadena}).

\begin{prop}\label{Ig} Let $g=(g_1,\dots, g_n):(\C^n,0)\to (\C^n,0)$ be a semi-weighted homogeneous map with respect to $w$.
Let $h_1=p_w(g)$ and let $h_2=g-h_1$. If $I(h_2)\subseteq \overline{m\cdot I(h_1)}$, then
$$
\LL_0(p_w(g))=\LL_0(g).
$$
\end{prop}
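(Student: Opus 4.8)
The plan is to reduce the stated equality of \L ojasiewicz exponents to an equality of integral closures and then to establish the latter. First I would record that, since $g$ is semi-weighted homogeneous with respect to $w$, both $g^{-1}(0)=\{0\}$ and $p_w(g)^{-1}(0)=\{0\}$, so the ideals $I(g)$ and $I(h_1)=I(p_w(g))$ have finite colength and $\LL_0(g)=\LL_0(I(g))$, $\LL_0(p_w(g))=\LL_0(I(h_1))$ are given by formula (\ref{LI}). Because $\overline{I^q}=\overline{(\overline I)^q}$ for every $q$, the right-hand side of (\ref{LI}) depends only on the integral closure of the ideal; hence it suffices to prove
$$\overline{I(g)}=\overline{I(h_1)}.$$
Throughout I set $L=\overline{I(g)}$ and $K=\overline{I(h_1)}$.

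The inclusion $L\subseteq K$ should be immediate. Writing $g_i=(h_1)_i+(h_2)_i$ gives $I(g)\subseteq I(h_1)+I(h_2)$, and since $I(h_2)\subseteq\overline{m\cdot I(h_1)}\subseteq\overline{I(h_1)}$ we obtain $I(g)\subseteq\overline{I(h_1)}$, whence $L\subseteq K$.

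The core of the argument, and the step where the hypothesis $I(h_2)\subseteq\overline{m\cdot I(h_1)}$ is really used, is the reverse inclusion $K\subseteq L$. From $(h_1)_i=g_i-(h_2)_i$ I would deduce $I(h_1)\subseteq I(g)+I(h_2)\subseteq I(g)+\overline{m\cdot I(h_1)}\subseteq L+\overline{mK}$, using $I(h_1)\subseteq K$. Taking integral closures and applying the identity $\overline{A+\overline B}=\overline{A+B}$ then gives $K\subseteq\overline{L+mK}$; as $L+mK\subseteq K$ trivially, this shows $\overline{L+mK}=K$, that is, $L+mK$ is a reduction of $K$. The plan is now to descend from this to the assertion that $L$ itself is a reduction of $K$: there is $r\geq 0$ with $K^{r+1}=(L+mK)K^r=LK^r+mK^{r+1}$, so the finitely generated module $N=K^{r+1}/LK^r$ satisfies $N=mN$, and Nakayama's lemma forces $K^{r+1}=LK^r$. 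Thus $\overline L=\overline K$, and since $L$ and $K$ are integrally closed, $L=K$, which finishes the reduction.

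I expect the main obstacle to be precisely this last descent, where one must exploit that the perturbation lies in $\overline{m\cdot I(h_1)}$ (not merely in $m\cdot\overline{I(h_1)}$) and combine the characterization ``reduction if and only if equal integral closure'' (see \cite{HunekeSwanson2006}) with Nakayama's lemma. As an alternative route entirely within $\O_n$, one could instead use the analytic description of integral closure: the hypothesis yields a bound $\vert (h_2)_i(x)\vert\leq C\Vert x\Vert\,\sup_j\vert (h_1)_j(x)\vert$ near $0$, which a geometric-series estimate absorbs into $\sup_j\vert g_j(x)\vert$ for $x$ small, giving $I(h_2)\subseteq\overline{I(g)}$ and hence $K\subseteq L$ directly.
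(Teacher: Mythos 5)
Your proposal is correct, and it in fact establishes something slightly stronger than the statement: the equality of integral closures $\overline{I(g)}=\overline{I(h_1)}$, from which equality of the exponents follows via (\ref{LI}) since $\overline{I^q}=\overline{(\overline I)^q}$. For the key inclusion your chain $I(h_1)\subseteq I(g)+I(h_2)\subseteq I(g)+\overline{m\cdot I(h_1)}$ is exactly the paper's; the difference is that the paper then invokes Teissier's integral Nakayama lemma \cite[p.\ 324]{Teissier1973} as a black box, whereas you reprove it from scratch: from $K\subseteq\overline{L+mK}$ and $L+mK\subseteq K$ you conclude that $L+mK$ is a reduction of $K$ (using that, in a Noetherian local ring, $J\subseteq I$ is a reduction if and only if $\overline J=\overline I$, see \cite{HunekeSwanson2006}), and the descent $K^{r+1}=LK^{r}+mK^{r+1}\Rightarrow K^{r+1}=LK^{r}$ by classical Nakayama gives $K\subseteq\overline L=L$; this is a faithful self-contained proof of the cited lemma. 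For the opposite inequality the routes genuinely diverge: the paper quotes Corollary \ref{corg}, so that $\LL_0(p_w(g))\leq\LL_0(g)$ holds for \emph{every} semi-weighted homogeneous map, unconditionally (proved there through $\sigma$-multiplicities and Theorem \ref{base}, or alternatively by P\l oski's semicontinuity), whereas you obtain the corresponding inclusion $\overline{I(g)}\subseteq\overline{I(h_1)}$ in one line from the hypothesis $I(h_2)\subseteq\overline{m\cdot I(h_1)}$ itself. What each buys: your version is more elementary and self-contained (no multiplicity theory at all, and it yields equality of closures rather than merely of exponents), while the paper's first inequality carries the extra information that it survives without the hypothesis on $I(h_2)$. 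One cosmetic slip in your analytic alternative: the absorption estimate directly gives $\sup_j\vert (h_1)_j(x)\vert\leq C'\sup_j\vert g_j(x)\vert$ near $0$, i.e.\ $I(h_1)\subseteq\overline{I(g)}$, which is what yields $K\subseteq L$; your phrasing via $I(h_2)\subseteq\overline{I(g)}$ also closes, but only after recombining with $I(h_1)\subseteq I(g)+I(h_2)$.
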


\begin{proof}
By Corollary \ref{corg} we have $\LL_0(p_w(g))\leq \LL_0(g)$. Let us see the reverse inequality.
Using the hypothesis $I(h_2)\subseteq \overline{m\cdot I(h_1)}$ we observe that
$$
I(h_1)\subseteq I(g)+I(h_2)\subseteq I(g)+\overline{m\cdot I(h_1)}\subseteq \overline{I(g)+m\cdot I(h_1)}.
$$
Then, by the integral Nakayama Lemma (see \cite[p.\ 324]{Teissier1973}) we obtain the inclusion
$$
I(h_1)\subseteq \overline{I(g)},
$$
which implies that $\LL_0(I(h_1))\geq \LL_0(I(g))$, that is $\LL_0(p_w(g))\geq \LL_0(g)$ and hence the result follows.
\end{proof}

\begin{cor}\label{delfinal} Let $J_1,\dots, J_n$ be ideals of $\O_n$ such that $(J_1,\dots, J_n)$ is $w$-non-degenerate.
Let $r_i=d_w(J_i)$, for all $i=1,\dots, n$. If there exists some $i_0\in\{1,\dots, n\}$
such that $r_{i_0}=\max\{r_1,\dots r_n\}$ and $p_w(J_{i_0})\subseteq p_w(I)$, then
$$
\LL_I(J_1,\dots, J_n)\leq \frac{\max\{r_1,\dots, r_n\}}{d_w(I)}.
$$
\end{cor}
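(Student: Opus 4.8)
The plan is to deduce the bound from Theorem \ref{main} by showing that the hypothesis $p_w(J_{i_0})\subseteq p_w(I)$ forces the pair $(I;J_1,\dots, J_n)$ to be $\Gamma_+^w$-linked. Since $\Gamma_+^w$ has a single compact facet, the Newton filtration $\nu_{\Gamma^w}$ coincides with $d_w$, so $\nu_{\Gamma^w}(I)=d_w(I)$ and $\nu_{\Gamma^w}(J_i)=r_i$; thus Theorem \ref{main} applied with $\Gamma_+=\Gamma_+^w$ already yields the inequality $\LL_I(J_1,\dots, J_n)\leq \frac{\max\{r_1,\dots, r_n\}}{d_w(I)}$ from the $w$-non-degeneracy of $(J_1,\dots, J_n)$ alone. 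The role of the extra hypothesis is to upgrade this to an equality, and it is this stronger statement that I would actually establish.

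First I would reduce $w$-linkage to a finiteness condition on leading forms. Set $p=\max\{r_1,\dots, r_n\}$; by hypothesis $r_{i_0}=p$, so $i_0$ lies in the set $A=\{i:r_i=p\}$ of Definition \ref{linkage}. It therefore suffices to prove that $(J_1,\dots, J_{i_0-1},I,J_{i_0+1},\dots, J_n)$ is $w$-non-degenerate. Since $(J_1,\dots, J_n)$ is $w$-non-degenerate, Lemma \ref{oneface} gives $\sigma(p_w(J_1),\dots, p_w(J_n))<\infty$. The inclusion $p_w(J_{i_0})\subseteq p_w(I)$ enlarges only the $i_0$-th slot, so Lemma \ref{reverseincl} applies and yields
$$
\sigma\big(p_w(J_1),\dots, p_w(J_{i_0-1}),p_w(I),p_w(J_{i_0+1}),\dots, p_w(J_n)\big)<\infty.
$$
By the same reasoning as in Remark \ref{simpl} (with the arbitrary proper ideal $I$ playing the role of $m$), this finiteness of the leading-form tuple is equivalent, via Lemma \ref{oneface}, to the $w$-non-degeneracy of $(J_1,\dots, J_{i_0-1},I,J_{i_0+1},\dots, J_n)$. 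Hence $(I;J_1,\dots, J_n)$ is $\Gamma_+^w$-linked.

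With linkage in hand, Theorem \ref{main} (applied to $\Gamma_+=\Gamma_+^w$) gives the equality $\LL_I(J_1,\dots, J_n)=\frac{p}{d_w(I)}$, and in particular the asserted inequality. The point requiring the most care is the finiteness bookkeeping in the middle step: Lemma \ref{oneface} is stated under the standing assumption that the Rees mixed multiplicity of the \emph{full} tuple is finite, so to invoke it for $(J_1,\dots, I,\dots, J_n)$ I must first know $\sigma(J_1,\dots, I,\dots, J_n)<\infty$. This is guaranteed by the displayed finiteness of the leading-form tuple: choosing by Proposition \ref{sigmaexists} elements of $p_w(J_i)$ ($i\neq i_0$) and of $p_w(I)$ whose ideal has finite colength, and lifting them to $g_i\in J_i$ and $\gamma\in I$ with the prescribed principal parts, the isolated common zero of the leading forms descends to an isolated common zero of $g_1,\dots, \gamma,\dots, g_n$, whence $\sigma(J_1,\dots, I,\dots, J_n)<\infty$ again by Proposition \ref{sigmaexists}. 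This is precisely the mechanism already used implicitly in Remark \ref{simpl}, and it is the only genuinely nontrivial verification; everything else is a direct appeal to the cited results.
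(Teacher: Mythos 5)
Your proposal is correct and takes essentially the same route as the paper's proof: Lemma \ref{oneface} gives $\sigma(p_w(J_1),\dots,p_w(J_n))<\infty$, the inclusion $p_w(J_{i_0})\subseteq p_w(I)$ plus Lemma \ref{reverseincl} transfers this finiteness to the tuple with $p_w(I)$ in the $i_0$-th slot, Lemma \ref{oneface} then yields that $(I;J_1,\dots,J_n)$ is $w$-linked, and Theorem \ref{main} concludes. Your extra bookkeeping step --- lifting general elements of the $p_w(J_i)$ and $p_w(I)$ to semi-weighted homogeneous elements of $J_i$ and $I$ to secure $\sigma(J_1,\dots,I,\dots,J_n)<\infty$ before invoking Lemma \ref{oneface} in the converse direction --- is a legitimate refinement the paper leaves implicit (and is genuinely needed, since $p_w(J)\not\subseteq J$ in general), as is your observation that the stated inequality already follows from $w$-non-degeneracy alone while the inclusion hypothesis upgrades it to equality.
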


\begin{proof}
Since $(J_1,\dots, J_n)$ is $w$-non-degenerate we have $\sigma\left(p_w(J_1), \dots,p_w(J_n)\right)<\infty$, by Lemma \ref{oneface}.
Thus the inclusion $p_w(J_{i_0})\subseteq p_w(I)$ leads to obtain
$$
\infty>\sigma\left(p_w(J_1), \dots,p_w(J_n)\right)\geq \sigma\left(p_w(J_1), \dots,p_w(J_{i_0-1}), p_w(I),p_w(J_{i_0+1}),\dots, p_w(J_n)\right),
$$
by Lemma \ref{reverseincl}. Then the pair $(I; J_1,\dots, J_n)$ is $w$-linked by Lemma \ref{oneface} and thus the result
follows by Theorem \ref{main}.
\end{proof}

Given a function $f\in\O_n$ and an integer $i\in\{1,\dots, n\}$, we denote by $\supp_i(f)$ the set of those $k\in\supp(f)$
such that $k_i\neq 0$. It is clear that if $\supp_i(f)=\emptyset$, for some $i\in\{1,\dots, n\}$, then $f$ does not depend on the variable $x_i$ and therefore $f$ does not have an isolated singularity at the origin. The next result, which is an immediate consequence of Corollary \ref{delfinal}, allows to determine easily the \L ojasiewicz exponent $\LL_0(\nabla f)$ for an ample class of functions $f\in \O_n$.

\begin{cor}\label{sufficient}
Let $f:(\C^n,0)\to (\C,0)$ be a semi-weighted homogeneous function of degree $d$ with respect to $w$.
Let $h=p_w(f)$. Let us suppose that there exists some $i_0\in A_w$ such that
$\frac{\partial f}{\partial x_{i_0}}\in m_w$. Then
\begin{equation}\label{eqL}
\LL_0(\nabla h)=\LL_0(\nabla f)=\frac{d-w_0}{w_0}.
\end{equation}

In particular, if $A_w=\{i_0\}$, for some $i_0\in\{1,\dots, n\}$, and $k_{i_0}\geq 2$, for all $k\in\supp_{i_0}(f)$, then
the above equality holds.
\end{cor}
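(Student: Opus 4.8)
The plan is to deduce the statement from Corollary \ref{delfinal} applied with $I=m$ and $J_i=\langle \partial f/\partial x_i\rangle$ for $i=1,\dots,n$, and then to upgrade the resulting bound to an equality via the $w$-linkage of $\nabla f$ together with Corollary \ref{corg}. Since $f$ is semi-weighted homogeneous with respect to $w$, so is the gradient map $g=\nabla f$, and $p_w(g)=\nabla h$ with $h=p_w(f)$; moreover $r_i:=d_w(\partial f/\partial x_i)=d-w_i$, so that $\max\{r_1,\dots,r_n\}=d-w_0$, this maximum being attained precisely at the indices $i\in A_w$. First I would record that $(J_1,\dots,J_n)$ is $w$-non-degenerate: by Lemma \ref{oneface} this amounts to $\sigma\big(\langle \partial h/\partial x_1\rangle,\dots,\langle \partial h/\partial x_n\rangle\big)<\infty$, which holds because $h$ has an isolated singularity at the origin and hence $\nabla h$ has an isolated zero there.

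Next I would verify the two conditions of Corollary \ref{delfinal} at the index $i_0\in A_w$ supplied by the hypothesis. On the one hand $r_{i_0}=d-w_{i_0}=d-w_0=\max\{r_1,\dots,r_n\}$. On the other hand I claim $p_w(J_{i_0})\subseteq p_w(m)=m_w$. Indeed, the elements of $m$ of minimal $w$-degree $w_0$ are the $\C$-linear combinations of the variables $x_i$ with $i\in A_w$ modulo terms of higher $w$-degree, whence $p_w(m)=\langle x_i:i\in A_w\rangle=m_w$; and $p_w(J_{i_0})=\langle p_w(\partial f/\partial x_{i_0})\rangle=\langle \partial h/\partial x_{i_0}\rangle$. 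The hypothesis $\partial f/\partial x_{i_0}\in m_w$ then forces $\partial h/\partial x_{i_0}\in m_w$ as well, since $m_w$ is a monomial ideal, so membership is tested monomial by monomial, and $\partial h/\partial x_{i_0}=p_w(\partial f/\partial x_{i_0})$ is a subsum of the monomials of $\partial f/\partial x_{i_0}$. Thus the hypotheses of Corollary \ref{delfinal} are met, and its proof shows that the pair $(m;J_1,\dots,J_n)$ is $w$-linked, i.e. that $\nabla f$ is $w$-linked. Corollary \ref{corg} then turns the chain of inequalities (\ref{cadena}) into equalities, giving $\LL_0(\nabla h)=\LL_0(\nabla f)=\frac{d-w_0}{w_0}$, which is (\ref{eqL}).

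Finally, for the \emph{in particular} assertion, suppose $A_w=\{i_0\}$, so that $m_w=\langle x_{i_0}\rangle$, and $k_{i_0}\geq 2$ for every $k\in\supp_{i_0}(f)$. Every monomial appearing in $\partial f/\partial x_{i_0}$ arises by differentiating some $x^k$ with $k\in\supp_{i_0}(f)$, and therefore has exponent $k_{i_0}-1\geq 1$ in the variable $x_{i_0}$; hence $\partial f/\partial x_{i_0}\in\langle x_{i_0}\rangle=m_w$, and we are reduced to the general case already treated. I do not expect a serious obstacle here, as the substantive content is carried by Corollary \ref{delfinal} and Theorem \ref{main}, which may be invoked directly; the one point deserving care is the passage from $\partial f/\partial x_{i_0}\in m_w$ to $\partial h/\partial x_{i_0}=p_w(\partial f/\partial x_{i_0})\in m_w$, and this is immediate from the monomial nature of $m_w$. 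The remaining steps are bookkeeping with the Newton data of $\Gamma_+^w$ and the identifications $p_w(\nabla f)=\nabla h$ and $p_w(m)=m_w$.
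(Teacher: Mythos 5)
Your proposal is correct and takes essentially the same route as the paper, which offers no separate proof but declares the corollary an ``immediate consequence of Corollary \ref{delfinal}'': you apply that corollary with $I=m$ and $J_i=\langle\partial f/\partial x_i\rangle$, note that its proof actually establishes the $w$-linkage of $(m;\nabla f)$, and then invoke Corollary \ref{corg} to turn the bound into the equalities of (\ref{eqL}). The details you supply --- the identifications $p_w(m)=m_w$ and $p_w(\partial f/\partial x_{i_0})=\partial h/\partial x_{i_0}$, the monomial-by-monomial membership test in $m_w$, the non-degeneracy check via Lemma \ref{oneface}, and the reduction of the \emph{in particular} clause to the general case --- are exactly the steps the paper leaves implicit.
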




\begin{ex} Let us consider the function $f:(\C^4,0)\to (\C,0)$ of \cite[Example 4.12]{BiviaEncinas2011}, that is
$f(x,y,z,t)=z^9-y^{11}t+yt^5+x^{27}$. This function is weighted homogeneous of degree $27$ with respect to $w$, where $w=(1,2,3,5)$.
We observe that $A_w=\{1\}$ and $\supp_1(f)=\{(27,0,0,0)\}$. Then $\LL_0(\nabla f)=26$, by Corollary \ref{sufficient}.
In \cite{BiviaEncinas2011} we arrived to the same conclusion via the notion of $w$-matching.
\end{ex}

If $h\in\O_n$, then we denote by $J(h)$ the ideal of $\O_n$ generated by $\frac{\partial h}{\partial x_1},\dots,\frac{\partial h}{\partial x_n}$.

\begin{ex} Let $f:(\C^3,0)\to (\C,0)$ be the function defined by $$f(x,y,z)=x^{12}+y^4+z^3+x^6yz.$$
We have that $f$ is semi-weighted homogeneous of degree $d=12$ with respect to $w=(1,3,4)$. Moreover
$f$ satisfies the hypothesis of Corollary \ref{sufficient}. Then
$$
\LL_0(p_w(f))=\LL_0(\nabla f)=\frac{d-w_0}{w_0}=11.
$$
We remark that $f=p_w(f)+x^6yz$ and $d_w(x^6yz)=13>d$. That is, $f$ is not weighted homogeneous with respect to $w$, hence
it is not possible to apply the results of \cite{KOP2009} in order to obtain the value of $\LL_0(\nabla f)$.
Moreover, since $\Gamma_+(x^6z) \nsubseteq\Gamma_+(m\cdot J(p_w(f)))$ we have $J(x^6yz)\nsubseteq \overline{m\cdot J(p_w(f))}$ and therefore
the equality $\LL_0(\nabla f)=\LL_0(\nabla p_w(f))$ is not a consequence of Proposition \ref{Ig}.
\end{ex}

\begin{ex} Let us fix an integer $a\geq 3$. Let us consider the function $h:(\C^4,0)\to (\C,0)$ given by
$$
h(x,y,z,t)=xy^{a+2}+z^a+tx^{a-2}+t^{a-1}.
$$
A straightforward computation shows that $h$ has an isolated singularity at the origin and that $h$
is weighted homogeneous of degree $d=a^3+a^2-2a$ with respect to the vector of weights $w=(a^2+2a, a^2-2a, a^2+a-2, a^2+2a)$.
Hence we have $\mu(h)=a^4-3a^3+2a^2$, as a consequence of the Milnor-Orlik formula \cite{MO} (or more generally, as a consequence
of Proposition \ref{BezoutlikeNewton} and Lemma \ref{oneface}).
Since $a\geq 3$, the minimum of the components of $w$ is $a^2-2a$ and $A_w=\{2\}$.
Hence $h$ satisfies the hypothesis of Corollary \ref{sufficient} and therefore
$$
\LL_0(\nabla h)=\frac{a^3+a^2-2a-(a^2-2a)}{a^2-2a}=\frac{a^2}{a-2}.
$$
Moreover, also from Corollary \ref{sufficient}, we have $\LL_0(\nabla f)=\LL_0(\nabla h)$, for all $f\in\O_4$
such that $p_w(f)=h$.
\end{ex}




\begin{thebibliography}{AA}
\bibitem{ArnoldGuseinVarchenko1985} Arnold, V.I., Gusein-Zade, S.M. and Varchenko, A.N.
\newblock{Singularities of differentiable maps. Vol. I: The
classification of critical points, caustics and wave fronts}.
\newblock Monogr. Math. Vol. 82 (1985). Birkh\"auser Boston, Inc., Boston, MA.

\bibitem{Bivia2003} Bivià-Ausina, C.
\newblock{\it \L ojasiewicz exponents, the integral closure
of ideals and Newton polyhedra},
\newblock J. Math. Soc. Japan {\bf 55}, no. 3 (2003), 655--668.

\bibitem{Bivia2005} Bivi\`a-Ausina, C.
\newblock{\it Jacobian ideals and the Newton non-degeneracy condition},
\newblock Proc. Edinburgh Math. Soc. {\bf 48} (2005), 21--36.

\bibitem{Bivia2008} Bivi\`a-Ausina, C.
\newblock{\it Joint reductions of monomial ideals and multiplicity of
complex analytic maps},
\newblock Math. Res. Lett. {\bf 15}, No. 2 (2008), 389--407.

\bibitem{Bivia2009} Bivià-Ausina, C.
\newblock{\it Local \L ojasiewicz exponents, Milnor numbers and mixed
multiplicities of ideals},
\newblock Math. Z. {\bf 262}, No. 2 (2009), 389--409.

\bibitem{BiviaEncinas2009} Bivi\`a-Ausina, C. and Encinas, S.
\newblock{\it \L ojasiewicz exponents and resolution of singularities},
\newblock Arch. Math. {\bf 93}, No. 3 (2009), 225-234.

\bibitem{BiviaEncinas2011} Bivi\`a-Ausina, C. and Encinas, S.
\newblock{\it The \L ojasiewicz exponent of a set of weighted homogeneous ideals},
\newblock J. Pure and Appl. Algebra {\bf 215}, No. 4, 578--588 (2011).

\bibitem{BFS} Bivi\`a-Ausina, C., Fukui, T. and Saia, M.J.
\newblock{\it Newton graded algebras and the codimension of
non-degenerate ideals},
\newblock Math. Proc. Camb. Phil. Soc. {\bf 133} (2002), 55--75.



\bibitem{DamonGaffney} Damon, J. \and Gaffney, T.
\newblock{\it Topological triviality of deformations of functions and Newton filtrations.},
\newblock Invent. Math. {\bf 72} (1983), 335--358.

\bibitem{Fukui1991} Fukui, T.
\newblock{\it \L ojasiewicz type inequalities and Newton diagrams},
\newblock Proc. Amer. Math. Soc. {\bf 114} (1991), vol. 4, 1169--1183.

\bibitem{Singular} Greuel, G.-M., Pfister, G. and Sch\"onemann, H.
{\sc Singular} 3.0. A Computer Algebra System for Polynomial
Computations. Centre for Computer Algebra, University of
Kaiserslautern (2005). {\tt http://www.singular.uni-kl.de}.

\bibitem{HIO} Herrmann, M., Ikeda, S. and Orbanz, U.
\newblock Equimultiplicity anb Blowing Up. An algebraic study with an appendix by B. Moonen,
\newblock Springer-Verlag (1988).


\bibitem{Hochster1972} Hochster, M.
\newblock {\it Rings of invariants of tori, Cohen-Macaulay rings generated by monomials, and polytopes},
\newblock Ann. of Math. {\bf 96} (1972), 318--337.

\bibitem{HunekeSwanson2006} Huneke, C. and Swanson, I.
\newblock Integral closure of ideals, rings, and modules.
\newblock London Math. Soc. Lecture Note Series 336 (2006), Cambridge
University Press.

\bibitem{K} Kouchnirenko, A.G.
\newblock {\it Poly\`edres de Newton et nombres de Milnor},
\newblock Invent. Math. {\bf 32} (1976), 1--31.

\bibitem{KOP2009} Krasi\'nski, T., Oleksik, G. and P\l oski, A.
\newblock{\it The \L ojasiewicz exponent of an isolated weighted
homogeneous surface singularity},
\newblock Proc. Amer. Math. Soc. {\bf 137}, No. 10 (2009), 3387--3397.

\bibitem{LT2008} Lejeune, M. \and Teissier, B.
\newblock{\it Cl\^oture int\'egrale des id\'eaux et equisingularit\'e}, with an appendix by J.J. Risler.
\newblock Ann. Fac. Sci. Toulouse Math. (6) {\bf 17}, No. 4 (2008), 781--859.

\bibitem{Lenarcik} Lenarcik, A.
\newblock{\it On the \L ojasiewicz exponent of the gradient of a holomorphic function},
\newblock Singularities Symposium--\L ojasiewicz 70 (B. Jakubczyk
ed.), Banah Center Publications, 44, Warszawa, 1998, 149--166.


\bibitem{Lojasiewicz1959} \L ojasiewicz, S.
\newblock{\it Sur le problème de la division},
\newblock Studia Math. {\bf 18} (1959), 87--136.

\bibitem{MO} Milnor, J. and Orlik, P.
\newblock {\it Isolated singularities defined by weighted homogeneous map
polynomials},
\newblock Topology {\bf 9} (1970), 385--393.

\bibitem{Nagata1957} Nagata, M.
\newblock{\it Note on a paper of Samuel concerning asymptotic properties of ideals},
\newblock Mem. Coll. Sci. Univ. Kyoto Ser. A Math. {\bf  30} (1957), 165--175.

\bibitem{Oleksik2009} Oleksik, G.
\newblock{\it The \L ojasiewicz exponent of nondegenerate singularities},
\newblock Univ. Iagel. Acta Math. {\bf 47} (2009), 301--301.


\bibitem{Parusinski} Parusi\'nski, A.
\newblock{\it Topological triviality of $\mu$-constant deformations of type $f(x)+tg(x)$},
\newblock Bull. London Math. Soc. {\bf 31}, No. 6 (1999), 686--692.

\bibitem{Ploski1988} P\l oski, A.
\newblock{\it Multiplicity and the \L ojasiewicz exponent},
\newblock Singularities (Warsaw, 1985), 353--364. Banach Center Publ., 20, PWN, Warsaw, 1988.

\bibitem{Ploski1985} P\l oski, A.
\newblock{\it Sur l'exposant d'une application analytique II},
\newblock Bull. Polish Acad. Sci. Math. {\bf 33}, No. 3--4 (1985), 123--127.

\bibitem{PloskiArXiv} P\l oski, A.
\newblock{\it Semicontinuity of the \L ojasiewicz exponent},
\newblock preprint (2011), ArXiv:1102.4730v1 [math.AG].



\bibitem{Saeki1988} Saeki, O.
\newblock{\it Topological invariance of weights for weighted homogeneous isolated singularities in $\C^3$},
\newblock Proc. Amer. Math. Soc. {\bf 103}, No. 3 (1988), 905--909.

\bibitem{Saia1996} Saia, M.J.
\newblock{\it The integral closure of ideals and the  Newton filtration},
\newblock J. Algebraic Geom. {\bf 5} (1996),1--11.




\bibitem{Teissier1973} Teissier, B.
\newblock{\it Cycles \'evanescents, sections planes et conditions of
Whitney},
\newblock Singularit\'es \`a Carg\`ese, Ast\'erisque, no. 7--8 (1973),
285--362.

\bibitem{Teissier1977} Teissier, B.
\newblock{\it Vari\'et\'es polaires {I}. Invariants polaires des singularit\'es d'hypersurfaces},
Invent. Math. {\bf 40}, No. 3 (1977), 267--292.

\bibitem{TeissierMSRI} Teissier, B.
\newblock{\it Monomial ideals, binomial ideals, polynomial ideals},
\newblock Math. Sci. Res. Inst. Publ. {\bf 51} (2004), 211-246.

\bibitem{Yau1988} Yau, S. S.-T.
\newblock{\it Topological types and multiplicities of isolated quasihomogeneous surface singularities},
\newblock Bull. Amer. Math. Soc. {\bf 19} (1988), 447--454.


\end{thebibliography}
\end{document}